\DeclareMathOperator{\argmin}{argmin}
\DeclareMathOperator{\spt}{spt}
\DeclareMathOperator{\dist}{dist}
\def\a{\alpha}
\def\ds{\displaystyle}
\def\eps{{\varepsilon}}
\def\N{\mathbb{N}}
\def\R{\mathbb{R}}
\def\CC{\mathbb{C}}
\def\O{\Omega}
\def\Cb{\overline{C}}
\def\rhob{\overline{\rho}}
\def\rhop{\rho^ \perp}
\def\E{\mathcal{E}}
\def\HH{\mathcal{H}}
\def\PP{\mathcal{P}}
\def\C{\mathcal{C}}
\def\Tcal{\mathcal{T}}
\def\Pcal{\mathcal{P}}
\def\Ccal{\mathcal{C}}
\def\Ucal{\mathcal{U}}
\newcommand{\be}{\begin{equation}}
\newcommand{\ee}{\end{equation}}
\newcommand{\bib}[4]{\bibitem{#1}{\sc#2: }{\it#3. }{#4.}}
\newcommand{\weak}{\stackrel{*}{\rightharpoonup}}
\newcommand{\norm}[1]{\left\lvert #1 \right\rvert}
\newcommand{\Norm}[1]{\left\lVert #1 \right\rVert}
\numberwithin{equation}{section}
\theoremstyle{plain}
\newtheorem{theo}{Theorem}[section]
\newtheorem{lemm}[theo]{Lemma}
\newtheorem{prop}[theo]{Proposition}
\newtheorem{defi}[theo]{Definition}
\theoremstyle{remark}
\newtheorem{rema}[theo]{\bf Remark}
\newtheorem{exam}[theo]{Example}
\title[Bond dissociating limit in DFT with strongly correlated electrons]{Dissociating limit in Density Functional Theory with Coulomb optimal transport cost}
\author{Guy Bouchitt\'e, Giuseppe Buttazzo, Thierry Champion, Luigi De Pascale}
\begin{document}

\begin{abstract} In the framework of Density Functional Theory with Strongly Correlated Electrons we consider the so called bond dissociating limit for the energy of an aggregate of atoms. We show that the multi-marginals optimal transport cost with Coulombian electron-electron repulsion may correctly describe the dissociation effect. The variational limit is completely calculated in the case of $N=2$ electrons. The theme of fractional number of electrons appears naturally and brings into play the question of optimal partial transport cost. A plan is outlined to complete the analysis which involves the study of the relaxation of optimal transport cost with respect to the weak* convergence of measures.
\end{abstract}

\maketitle

\medskip
\textbf{Keywords:} Density Functional Theory, Multi-marginal optimal transport, Coulomb cost, Molecular dissociation

\textbf{2010 Mathematics Subject Classification:} 49J45, 49N15, 49K30

\section{Introduction}

The analysis of minimum problems for functionals involving the wave function is one of the most studied topics in quantum physics. The Born-Oppenheimer model for the electronic structure of several particles systems deals with the functional
\be\label{wavef}
\E(\psi)=\frac{\hbar^2}{2m}T(\psi)+U_{ee}(\psi)-U_{ne}(\psi)
\ee
where
\[\begin{split}
&T(\psi)=\sum_{s_i=\pm1}\int_{\R^{3N}}\sum_{1\le i\le N}|\nabla_{x_i}\psi|^2\,dx_1\dots dx_N\quad\hbox{(kinetic energy)},\\
&U_{ee}(\psi)=\sum_{s_i=\pm1}\int_{\R^{3N}}\bigg(\sum_{1\le i<j\le N}\frac{1}{|x_i-x_j|}\bigg)|\psi|^2\,dx_1\dots dx_N\quad\hbox{(electron-electron interaction)},\\
&U_{ne}(\psi)=\sum_{s_i=\pm1}\int_{\R^{3N}}V(x_1,\dots,x_N)|\psi|^2\,dx_1\dots dx_N\quad\hbox{(electron-nuclei interaction)}.
\end{split}\]
We do not consider the nucleus-nucleus interaction because in our case we assume the nuclei are fixed and this extra term would then simply be a constant. However, if the nuclei are not considered as fixed, an extra term involving a repulsive nucleus-nucleus interaction has to be added.

Here $\psi(x_1,s_1,x_2,s_2,\dots,x_N,s_N)$ is the wave function depending on space coordinates $x_i$ and spin coordinates $s_i$, $m$ the reduced mass of the nuclei, $N$ the number of electrons, and $V$ a potential. In the Coulomb case, if we assume to have $M$ nuclei with positions $X_k$ and charges $Z_k$ ($k=1,\dots,M$), we may take
$$V(x_1,\dots,x_N)=\sum_{1\le k\le M}\frac{Z_k}{|x-X_k|}\;,$$
even if most of the analysis can be similarly carried out assuming only $V\in L^{3/2}(\R^3)+L^\infty(\R^3)$. The class where the functional above has to be minimized is the class of functions that, with respect to the space variables, belong to the Sobolev space $H^1(\R^{3N};\CC)$ with $\|\psi\|_{L^2}=1$.

Concerning the symmetry assumptions on the functions $\psi$ in the admissible class, there are two main cases considered in physics:
\begin{itemize}
\item the {\em bosonic} case, where for all permutations $\sigma$ of $N$ points
$$\psi\big((x_{\sigma(1)},s_{\sigma(1)}),\dots,(x_{\sigma(N)},s_{\sigma(N)})\big)=\psi\big((x_1,s_1),\dots,(x_N,s_N)\big)$$
\item the {\em fermionic} case, where for all permutations $\sigma$ of $N$ points 
$$\psi\big((x_{\sigma(1)},s_{\sigma(1)}),\dots,(x_{\sigma(N)},s_{\sigma(N)})\big)=sign(\sigma)\psi\big((x_1,s_1),\dots,(x_N,s_N)\big)$$
\end{itemize}

To simplify this rather complex problem, several approximated models have been proposed; here we deal with the one considered in the Density Functional Theory (DFT) introduced in the works of Thomas \cite{thm27} and Fermi \cite{frm27} and then revived by Hohenberg, Kohn and Sham \cite{hk64, ks65} and, from a variational point of view by Levy \cite{Lev79} and Lieb \cite{lieb83}. 
The DFT looks at the $N$-point probability distribution of electrons $\rho$ (also known as charge density) as the main variable, replacing the wave function $\psi$ by
$$\rho(x)=\sum_{s_i=\pm1}\int_{\R^{3(N-1)}}|\psi(x,s_1,x_2,s_2,\dots,x_N,s_N)|^2\,dx_2,\dots,dx_N.$$
The usual choice for the definition of the electrons density is $\tilde \rho = N \rho$
	so that the approximations for the kinetic energy and the potential ({\em i.e.} electron-nuclei interaction) terms are respectively given by
\[\begin{split}
&N\, K\, \hbar^2\int\frac{|\nabla\rho|^2}{\rho}\,dx\qquad\hbox{(kinetic)};\\
&N\,\int V(x)\rho\,dx\qquad\hbox{(potential)}.
\end{split}\]
where all the integrals with no domain of integration explicitly defined are intended on $\R^3$. Here we choose to keep $\rho$ as a probability density, and then divide the whole energy by $N$. The constant $K$ depends on the case considered (bosonic or fermionic) see Theorems 1.1 and 1.2 of \cite{lieb83}; in the rest of the paper it will be normalized to $1$.

Concerning the approximation for electron-electron correlation, the considered term can be expressed using the multimarginal mass transport theory (see for instance \cite{bdpgg12,cfk13}) as detailed below.

The Kohn-Sham model is an approximation of the wave functional \eqref{wavef} and aims to determine the probability to find $N$ electrons in a given position once the positions $X_k$ and the charges $Z_k$ of $M$ nuclei are given. The probability $\rho$ is obtained through the minimization of a suitable functional
\be\label{Feps}
F_\eps(\rho)=\eps T(\rho)+bC(\rho)-U(\rho)
\ee
where $\eps$ is a small parameter depending on the Planck constant $\hbar$, $b$ is a given positive constant ($b=1$ in the original Kohn-Sham model, $b=1/N$ with our normalization for $\rho$), and the three terms appearing are defined as follows :
\begin{itemize}
\item The {\it kinetic energy} $T$ is of the form
$$T(\rho)=\int\frac{|\nabla\rho|^2}{4\rho}\,dx=\int|\nabla\sqrt\rho|^2\,dx.$$
\item The {\it correlation term} $C$ is given by means of the multimarginal mass transport functional
\be\label{defC}
C(\rho):=\inf\left\{\int_{\R^{3N}}c(x_1,\dots,x_N)\,dP(x_1,\dots,x_N)\, :\, \forall i=1,\dots,N, \; \pi^\#_iP=\rho\, \right\},
\ee
where $P$ is a probability on $\R^{3N}$, $\pi_i$ is the projection map from $\R^{3N}$ on its $i$-th factor $\R^3$, $\#$ denotes the push forward operator defined for a map $f$ and a measure $\mu$ by
$$f^\#\mu(E)=\mu\big(f^{-1}(E)\big),$$
and $c$ is the Coulomb correlation function
$$c(x_1,\dots,x_N)=\sum_{1\le i<j\le N}\frac{1}{|x_i-x_j|}\;.$$
\item The {\it potential term} $U$ is of the form
$$U(\rho)=\int V(x)\,d\rho(x)$$
being $V(x)$ the Coulomb potential
$$V(x)=\sum_{1\le k\le M}\frac{Z_k}{|x-X_k|}\,.$$
We shall also write $U$ in the form
$$U(\rho)=\sum_{k=1}^M Z_k\,U_{X_k}(\rho)\qquad \text{where}\qquad 
U_{X_k}(\rho):=\int\frac{1}{|x-X_k|}\,d\rho(x)\,.$$
\end{itemize}

When the Coulomb correlation term $C$ above is chosen the theory is usually called Strongly Correlated Electrons Density Functional Theory (SCE-DFT); it was started and developed since the late 90's (see for instance \cite{sei99, spl99, sggs07}) and the connection with optimal transport was made in \cite{bdpgg12, cfk13}.

Our goal is to describe the behavior, as $\eps\to0$, of the asymptotically minimizing sequences $\{\rho_\eps\}_\eps$ of the functionals $F_\eps$ in \eqref{Feps}; we will see that, as $\eps\to0$, the minimal values of $F_\eps$ tend to $-\infty$ as $-1/\eps$, so it is convenient to consider the rescaled functionals
\be\label{Geps}
G_\eps(\rho)=\eps F_\eps(\rho)=\eps^2 T(\rho)+\eps bC(\rho)-\eps U(\rho)
\ee
that have the same minimizers as $F_\eps$ (we refer to Definition \ref{defminim} for a precise statement). It turns out that, by a scaling property of the above functionals, minimizing $G_\eps$ is equivalent to minimize the functional $F_1$ with nuclei at positions $\tilde X_i = X_i / \eps$ (see \eqref{Dbond}) : in this respect the asymptotic study of $G_\eps$ corresponds to the study of the so-called {\it dissociating bond problem} (see in particular Remark \ref{friesecke} and \cite{cfm14,mumgg14}).
The goal is then to characterize the $\Gamma$-limit $G$ of $G_\eps$ with respect to the weak* convergence of measures. In this way, the minimizers $\rho_\eps$ of $G_\eps$ (or equivalently of $F_\eps$) will tend to minimizers of $G$ in the weak* convergence of measures. Since we are on the whole space $\R^3$, the weak* convergence in principle does not preserve the total mass, so we could expect that the limits $\rho$ of $\rho_\eps$ are not anymore probabilities and only satisfy the inequality $\int d\rho\le1$.

The analysis of the limit functional $G$ is then very important and the ultimate goal is to characterize $G$ explicitly in terms of the data. This would allow to determine the measures $\rho$ that minimize $G$ and, by consequence, a deep information on the minimizers $\rho_\eps$ of $F_\eps$. For instance, an important issue is to establish if the optimal $\rho$ for $G$ consists of a sum of Dirac masses located at the points $X_k$, that is
$$\rho=\sum_{k=1}^M\alpha_k\delta_{X_k}\qquad\hbox{with $\alpha_k\ge0$ and }\sum_{k=1}^M\alpha_k=1.$$
In this case the minimal value of the functional $G$ would depend on the coefficients $\alpha_k$ and on the charges $Z_k$:
$$G(\rho)=\gamma(Z_1,\dots,Z_M,\alpha_1,\dots,\alpha_M).$$
In principle the function $\gamma$ above could be very involved, mixing the data in a very intricate way; a better situation would occur if
\be\label{better}
\gamma(Z_1,\dots,Z_M,\alpha_1,\dots,\alpha_M)=\sum_{k=1}^M g(Z_k,\alpha_k)
\ee
where the function $g$ can be deduced through the solution of an {\it auxiliary problem} with only one nucleus. This seems to be the case, even if our proofs are complete only in the case $N\le2$ (for any number $M$ of nuclei).

Nevertheless, several points on the asymptotic analysis of the functionals $F_\eps$ can be achieved in full generality. Due to some technical difficulties, mostly related to the analysis of the correlation term $C$, we are able to obtain a complete characterization of the $\Gamma$-limit functional $G$ only when the number $N$ of electrons is at most 2 (for any number $M$ of nuclei). In this case, we use a concept of {\it fractional transport cost}, which allows us to obtain an explicit representation of the $\Gamma$-limit functional $G$. This will imply that the optimal measures $\rho$ for $G$ are actually probabilities, and so the convergence of $\rho_\eps$ to $\rho$ is in the narrow sense. In addition (see Theorem \ref{explform}), formula \eqref{better} is shown to hold, together with an expression of the function $g$ that can be deduced by means of an auxiliary variational problem.


The characterization of the $\Gamma$-limit of $F_\eps$ as $\eps\to0$ is related to the semi-classical limit of the so-called Hohenberg-Kohn energy or Levy-Lieb functional, which has been considered in several recent papers (see for instance \cite{bdp17, cfk13, cfk17, lew17}). However, since the potential term $U(\rho)$ is not continuous with respect to the weak* convergence, this $\Gamma$-limit cannot be simply deduced from the one of $\eps T(\rho)+bC(\rho)$, which reduces to the lower semicontinuous envelope $b\overline C(\rho)$.

Here is the plan of the paper.
In Sections \ref{sksmodel} and \ref{sresca} we introduce the notations used in the following and show some basic properties of the functionals $F_\eps$ and of its three components. We also introduce the functionals $G_\eps$ together with some of their asymptotic properties. Section \ref{sprel} is devoted to the non-interacting case $b=0$, in which the electron-electron correlation term is not present. This simplifies a lot the analysis and an explicit characterization of the $\Gamma$-limit functional is obtained in this case. In Section \ref{two-interact} we treat the case with the electron-electron interaction term $C(\rho)$ and we provide, in the case $N=2$, a general expression of the $\Gamma$-limit functional $G$. Finally, in the last section of the paper we discuss about the case $N>2$ and some open issues, concluding the paper with some comments about our future work program.

\section{Introductory results on the correlation term $C$}\label{sksmodel}

In the following by $\PP$ we denote the class of Borel probabilities in $\R^3$ and by $\PP^-$ the class of Borel subprobabilities on $\R^3$, that is Borel measures $\mu$ with $\int d\mu\le1$. By $\weak$ and $\rightharpoonup$ we respectively denote the weak* convergence and the narrow convergence on $\PP$ and on $\PP^-$, and by $\delta_X$ the Dirac mass at the point $X$. We also indicate by $\|\rho\|$ the quantity $\int d\rho$.

By $T(\rho)$, $C(\rho)$, $U(\rho)$ we denote the functionals detailed in the Introduction, representing respectively the kinetic energy, the correlation term, and the potential term of a density $\rho \in \PP$. As noted in the introduction, the absolutely minimizing sequences for the functionals $F_\eps$ (or $G_\eps$) may only weak* converge to elements in $\PP^-$, it is then convenient to extend the functionals $T(\rho)$, $C(\rho)$ and $U(\rho)$ to any non-negative bounded measure (in particular on $\PP^-$) by $1$-homogenity.
We shall denote these $1$ homogeneous extensions by $\Tcal(\rho)$, $\Ccal(\rho)$ and $\Ucal(\rho)$ respectively : for $\Tcal$ and $\Ucal$, this extension process obviously leads to the same expression, and these two functionals are lower semicontinuous on $\PP^-$ for the weak* convergence. For the correlation term $\Ccal(\rho)$ this extension
is also obtained through the same expression \eqref{defC} and reads
\[ 
\Ccal(\rho):=\inf\left\{\int_{\R^{3N}}c(x_1,\dots,x_N)\,dP(x_1,\dots,x_N)\ :\ \pi^\#_iP=\rho, \ \forall i=1,\dots,N\right\}
\]
where we note that the transport plans $P$ are non-negative Borel measures on $R^{3N}$ with total mass $\Norm{P} = \Norm{\rho}$.

While most of the readers are probably familiar with the first and third terms $T$ and $U$ of $F_\eps$, we believe that the following results are useful to understand the correlation term $C$.

\begin{prop}\label{ntensor}
For every probability $\rho$ on $\R^3$ we have
$$C(\rho)\le\frac{N(N-1)}{2}\int_{(\R^3)^2}\frac{1}{|x-y|}\,d\rho(x)d\rho(y).$$
In particular, $C(\rho)$ is finite for every bounded $\rho$ with compact support.
\end{prop}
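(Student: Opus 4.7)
The natural strategy is to exhibit an admissible transport plan achieving the claimed bound. Specifically, take $P := \rho^{\otimes N}$, the $N$-fold tensor product of $\rho$ with itself, as a candidate in the infimum defining $C(\rho)$ in \eqref{defC}. Since $\rho$ is a probability on $\R^3$, $P$ is a probability on $\R^{3N}$, and each of its marginals $\pi_i^\# P$ equals $\rho$, so $P$ is admissible.

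Next I would evaluate the cost of $P$ by expanding the Coulomb function $c$ as a sum over pairs and using Fubini's theorem:
\[
\int_{\R^{3N}} c(x_1,\dots,x_N)\, dP = \sum_{1\le i<j\le N}\int_{\R^{3N}} \frac{1}{|x_i-x_j|}\, d\rho(x_1)\cdots d\rho(x_N).
\]
For each pair $(i,j)$ the integration over the remaining $N-2$ variables contributes a factor $1$ (since $\rho$ is a probability), so each term equals $\int_{(\R^3)^2} |x-y|^{-1}\, d\rho(x)d\rho(y)$. Summing the $N(N-1)/2$ identical contributions and using that $P$ is only a competitor (not necessarily optimal) yields the stated upper bound on $C(\rho)$.

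For the second assertion, let $\rho$ be absolutely continuous with bounded density $f \in L^\infty(\R^3)$ supported in a compact set $K$. Then for every $x\in \R^3$
\[
\int_{\R^3} \frac{d\rho(y)}{|x-y|} \le \|f\|_\infty \int_{K} \frac{dy}{|x-y|},
\]
and the right-hand side is uniformly bounded in $x$ because the function $y\mapsto |x-y|^{-1}$ is locally integrable in $\R^3$ (its singularity at $y=x$ is integrable in dimension $3$) and $K$ is compact. Integrating this uniform bound against $\rho$ gives a finite value of the double integral, hence $C(\rho)<\infty$ by the first part.

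No serious obstacle is expected: the only mildly delicate point is the measurability/integrability justifying Fubini for the pairwise decomposition, which follows from the non-negativity of $|x_i-x_j|^{-1}$ via Tonelli, and the integrability of $|x|^{-1}$ near the origin in $\R^3$ used to conclude finiteness on bounded, compactly supported densities.
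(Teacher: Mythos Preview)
Your argument is correct and is essentially identical to the paper's: both take $P=\rho^{\otimes N}$ as an admissible plan and reduce the cost to $\tfrac{N(N-1)}{2}$ copies of the two-marginal Coulomb integral by symmetry/Fubini. Your treatment of the finiteness assertion is more explicit than the paper's (which simply states it), but the reasoning via local integrability of $|x|^{-1}$ in $\R^3$ is exactly what is needed.
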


\begin{proof}
From the definition of $C(\rho)$, we may take $\rho\otimes\rho\otimes\dots\otimes\rho$ as a particular probability $P$, so that
$$C(\rho)\le\int_{\R^{3N}}c(x_1,\dots,x_N)\,d\rho(x_1)\dots d\rho(x_N)$$
and this last integral reduces to
$$\frac{N(N-1)}{2}\int_{(\R^3)^2}\frac{1}{|x-y|}\,d\rho(x)d\rho(y)$$
by the symmetry of the function $c$.
\end{proof}

\begin{prop}\label{variance}
For every probability $\rho$ on $\R^3$ we have
$$C(\rho)\ge\frac{N(N-1)}{4\sqrt{Var(\rho)}}\;,$$
where $Var(\rho)$ denotes the variance of the probability $\rho$. In particular, when $\rho$ is a Dirac mass, then $Var(\rho)=0$ and we recover that $C(\rho)=+\infty$.
\end{prop}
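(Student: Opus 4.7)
The plan is to reduce the bound on $C(\rho)$ to a pairwise estimate and apply a short chain of Cauchy--Schwarz inequalities. Since both the cost $\sum_{i<j}1/|x_i-x_j|$ and the variance are translation invariant under a common shift of all variables, I would first replace $\rho$ by its centering $(x-m)^\#\rho$ where $m=\int x\,d\rho$, so that from now on $\int x\,d\rho=0$ and $\int|x|^2\,d\rho=Var(\rho)$. If $Var(\rho)=0$ then $\rho=\delta_X$, the only admissible plan is $\delta_{(X,\dots,X)}$ and $c$ is infinite there, so the statement is trivial; hence I may assume $Var(\rho)\in(0,+\infty)$ (if $Var(\rho)=+\infty$ the bound is empty).

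Next, pick any admissible plan $P$ in the infimum defining $C(\rho)$ and, for each pair $i<j$, denote by $P_{ij}:=(\pi_i,\pi_j)^\#P$ the two-marginal projection; it is a probability on $\R^3\times\R^3$ with both marginals equal to $\rho$. Since $\int c\,dP=\sum_{i<j}\int|x-y|^{-1}\,dP_{ij}(x,y)$, the whole statement reduces to the pairwise estimate
$$\int \frac{dP_{ij}(x,y)}{|x-y|}\ \ge\ \frac{1}{2\sqrt{Var(\rho)}}.$$
To obtain it, I would apply Cauchy--Schwarz to the factorization $1=|x-y|^{-1/2}\cdot|x-y|^{1/2}$ against $dP_{ij}$, which yields
$$\int\frac{dP_{ij}}{|x-y|}\ \ge\ \frac{1}{\int|x-y|\,dP_{ij}},$$
and then a second Cauchy--Schwarz (Jensen) to replace $\int|x-y|\,dP_{ij}$ by $\bigl(\int|x-y|^2\,dP_{ij}\bigr)^{1/2}$.

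It remains to show $\int|x-y|^2\,dP_{ij}\le 4\,Var(\rho)$. Expanding $|x-y|^2=|x|^2+|y|^2-2x\cdot y$ and using that the marginals of $P_{ij}$ are the centered measure $\rho$, the first two terms contribute $2\,Var(\rho)$. For the cross term, a coordinatewise Cauchy--Schwarz gives $\bigl|\int x\cdot y\,dP_{ij}\bigr|\le\int|x||y|\,dP_{ij}\le\bigl(\int|x|^2\,d\rho\bigr)^{1/2}\bigl(\int|y|^2\,d\rho\bigr)^{1/2}=Var(\rho)$, so $-2\int x\cdot y\,dP_{ij}\le 2\,Var(\rho)$, and the desired upper bound $4\,Var(\rho)$ follows. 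Combining the three estimates yields the pairwise bound, and summing over the $N(N-1)/2$ unordered pairs gives $C(\rho)\ge N(N-1)/(4\sqrt{Var(\rho)})$.

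The only subtle point is the cross term: because the plan $P_{ij}$ is a \emph{coupling} of $\rho$ with itself rather than the product $\rho\otimes\rho$, one does not have $\int x\cdot y\,dP_{ij}=|\!\int x\,d\rho|^2=0$, and the anti-correlation favored by a Coulomb-minimizing $P$ would push this cross term to be negative. The Cauchy--Schwarz bound $\int x\cdot y\,dP_{ij}\ge -Var(\rho)$ is exactly what absorbs this effect and produces the factor $4$ (instead of the factor $2$ one would get for independent copies) in the variance estimate; the preliminary reduction to the centered case is what makes this crude bound depend on $Var(\rho)$ alone rather than on $\int|x|^2\,d\rho$.
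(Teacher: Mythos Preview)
Your argument is correct and follows the same skeleton as the paper: reduce to the two-marginal projections $P_{ij}$, use Cauchy--Schwarz on the factorization $1=|x-y|^{-1/2}\cdot|x-y|^{1/2}$ to obtain $\int|x-y|^{-1}\,dP_{ij}\ge 1/\int|x-y|\,dP_{ij}$, then bound $\int|x-y|\,dP_{ij}\le 2\sqrt{Var(\rho)}$ and sum over pairs.

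The only difference lies in how that last bound is obtained. You pass through $\bigl(\int|x-y|^2\,dP_{ij}\bigr)^{1/2}$, expand the square, and then need a separate Cauchy--Schwarz estimate on the cross term $\int x\cdot y\,dP_{ij}$ --- the ``subtle point'' you flag. The paper instead uses the triangle inequality $|x-y|\le|x-\mathbb{E}(\rho)|+|y-\mathbb{E}(\rho)|$ directly on the first moment, giving $\int|x-y|\,dP_{ij}\le 2\int|x-\mathbb{E}(\rho)|\,d\rho\le 2\sqrt{Var(\rho)}$ in one line, with no cross term to worry about and no need for a preliminary centering. Both routes land on the same constant; the paper's is just a bit shorter.
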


\begin{proof}
Let $P$ be a probability on $\R^{3N}$ with $\pi^\#_iP=\rho$ for all $i=1,\dots,N$ and let $\gamma=(\pi_i \times \pi_j)^\#P$ be the projection of $P$ on the product $\R^3 \times \R^3$. Then we have
\[1\le\bigg(\int_{(\R^3)^2}\frac{1}{\sqrt{|x_i-x_j|}}\sqrt{|x_i-x_j|}\,d\gamma\bigg)^2\le\int_{(\R^3)^2}\frac{1}{|x_i-x_j|}\,d\gamma\int_{(\R^3)^2}|x_i-x_j|\,d\gamma\;.\]
Without loss of generality we assume $Var(\rho) < +\infty$ (otherwise there is nothing to prove), so that $\rho$ has a finite expectation $\mathbb{E}(\rho)$ and we can write
\[\begin{split}
\int_{(\R^3)^2}|x_i-x_j|\,d\gamma&\le\int_{(\R^3)^2}\big(|x_i-\mathbb{E}(\rho)|+|\mathbb{E}(\rho)-x_j|\big)\,d\gamma\\
&\le2\int|x-\mathbb{E}(\rho)|\,d\rho(x)
\; \le2\sqrt{Var(\rho)}\;.
\end{split}\]
Thus
$$\int_{(\R^3)^2}\frac{1}{|x_i-x_j|}\,d\gamma\ge\frac{1}{2\sqrt{Var(\rho)}}\;.$$
Summing on all $1\le i<j\le N$ we obtain
$$\int_{\R^{3N}}c(x_1,\dots,x_N)\,dP(x_1,\dots,x_N)\ge\frac{N(N-1)}{4\sqrt{Var(\rho)}}$$
as required.
\end{proof}

\begin{rema} \label{remaC}
By $1$-homogeneity, Propositions \ref{ntensor} and \ref{variance} yield
\[
\forall \rho \in \PP^-, \quad 
\frac{N(N-1)\Norm{\rho}^2}{4\sqrt{Var(\rho)}} \le \Ccal(\rho)\le\frac{N(N-1)}{2 \Norm{\rho}}\int_{(\R^3)^2}\frac{1}{|x-y|}\,d\rho(x)d\rho(y).
\]
In particular $0 < \Ccal(\rho)$ for any non zero smooth $\rho \in \PP^-$ with compact support. 
\end{rema}

It turns out that the function $C(\rho)$ is lower semicontinuous with respect to the tight convergence of probability measures. However this is not sufficient for our purposes since sequences with uniformly bounded energy \eqref{Geps} are not tight in general. Therefore we will deal with the weak* convergence and accordingly it is useful to introduce the following lower semicontinuous extension of $C$ to subprobabilities:
\[
\forall\rho\in\PP^-, \qquad \Cb(\rho):=\inf\left\{\liminf_{n\to+\infty}C(\rho_n)\ :\ \rho_n\in\PP,\ \rho_n\weak\rho\right\}\,.
\]
A natural guess could be that $\Cb(\rho)$ is equal to $\Ccal(\rho)$; however, this is not the case as the following proposition shows.

\begin{prop}\label{C-not-lsc}
The functional $\Ccal$ is convex on $\PP^-$, and the lower semicontinuous envelope $\Cb$ with respect to
the weak* convergence satisfies
\be \label{CpartN}
\Cb(\rho) \leq \min \left\{\Ccal(\mu)\ :\ 0\le\mu\le\rho,\ \int d\mu \ge \frac{N}{N-1}\left[\int d\rho-\frac{1}{N}\right]\right\}.
\ee
\end{prop}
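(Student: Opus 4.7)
The proposition splits into two independent claims: convexity of $\Ccal$ on $\PP^-$, and the upper bound for $\Cb(\rho)$.

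\textbf{Convexity.} This is painless once one notes that $\Ccal$ is $1$-homogeneous, so convexity is equivalent to subadditivity. For $\rho_0,\rho_1\in\PP^-$, any admissible plans $P_0,P_1$ for $\rho_0,\rho_1$ give an admissible plan $P_0+P_1$ for $\rho_0+\rho_1$, and
\[
\Ccal(\rho_0+\rho_1)\le \int c\,d(P_0+P_1)=\int c\,dP_0+\int c\,dP_1.
\]
Taking infima gives subadditivity, hence convexity.

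\textbf{The upper bound.} Fix $\mu$ satisfying the constraints and assume $\Ccal(\mu)<+\infty$ (otherwise the inequality is trivial). Write $\rho=\mu+\nu$ with $\nu\ge0$, and set
\[
\lambda=\Norm{\mu},\quad m_\nu=\Norm{\nu},\quad \beta=1-\Norm{\rho},\quad \beta'=\beta-(N-1)m_\nu.
\]
The hypothesis $\Norm{\mu}\ge\tfrac{N}{N-1}(\Norm{\rho}-\tfrac1N)$ is precisely equivalent to $\beta'\ge 0$, so the mass budget closes: $\lambda+Nm_\nu+\beta'=1$. This motivates the following construction, in which the part $\mu$ is transported optimally while $\nu$ and the missing mass $\beta$ are ``paired'' with fictitious particles sent to infinity.

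\textbf{Construction of $\rho_n$ and $P_n$.} Let $P^*$ be an ($\eta$-)optimal plan for $\Ccal(\mu)$. Choose $N$-tuples $(w_n^{(1)},\dots,w_n^{(N)})$ in $\R^3$ with $|w_n^{(i)}|\to+\infty$ and $|w_n^{(i)}-w_n^{(j)}|\to+\infty$ for $i\neq j$. For each $y\in\R^3$, let $T_n(y)$ be the symmetrization of $\delta_{(y,w_n^{(1)},\dots,w_n^{(N-1)})}$ on $\R^{3N}$, whose common marginal is $\tfrac1N\bigl(\delta_y+\sum_{i=1}^{N-1}\delta_{w_n^{(i)}}\bigr)$. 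Define
\[
Q_n=N\int T_n(y)\,d\nu(y),\qquad R_n=\beta'\,\mathrm{Sym}\bigl(\delta_{(w_n^{(1)},\dots,w_n^{(N)})}\bigr),\qquad P_n=P^*+Q_n+R_n.
\]
A direct computation shows that each marginal of $P_n$ equals the probability measure
\[
\rho_n:=\rho+\Bigl(m_\nu+\tfrac{\beta'}{N}\Bigr)\sum_{i=1}^{N-1}\delta_{w_n^{(i)}}+\tfrac{\beta'}{N}\delta_{w_n^{(N)}},
\]
which, by the mass identity above, has total mass $1$, and $\rho_n\weak\rho$ since the added atoms escape to infinity.

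\textbf{Cost estimate.} By symmetry of $c$,
\[
\int c\,dP_n=\int c\,dP^*+N\int c\bigl(y,w_n^{(1)},\dots,w_n^{(N-1)}\bigr)\,d\nu(y)+\beta'\,c\bigl(w_n^{(1)},\dots,w_n^{(N)}\bigr).
\]
The last term tends to $0$ from the pairwise separation of the $w_n^{(i)}$; the second term contains the pairwise-separation contribution plus $N\sum_{i=1}^{N-1}\int|y-w_n^{(i)}|^{-1}d\nu(y)$, which tends to $0$ once the $w_n^{(i)}$ are chosen far from the support of $\nu$. Passing $\eta\downarrow 0$ concludes $\Cb(\rho)\le \Ccal(\mu)$, and taking the infimum over admissible $\mu$ gives \eqref{CpartN}.

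\textbf{Main obstacle.} The clean ``$w_n^{(i)}$ far from $\spt\nu$'' choice is only automatic when $\nu$ has compact support. For general $\nu$, one truncates $\nu$ to a ball $B_R$ (obtaining $\nu_R\le\nu$), uses the construction above to build $\rho_n^R$ approximating $\mu+\nu_R$, and then runs a diagonal argument in $(R,n)$. One must check in this step that the feasibility $\beta'\ge0$ is preserved (which is automatic since truncation only decreases $m_\nu$) and that the potential $y\mapsto\int|y-w_n^{(i)}|^{-1}d\nu_R(y)$ remains controlled; this is the only nontrivial technical point of the whole argument.
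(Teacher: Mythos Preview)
Your argument is correct, but the construction differs from the paper's in an instructive way.

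For convexity, both proofs are essentially the same (the paper just says it follows from linearity of the marginal constraint).

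For the upper bound, the paper first reduces to the case where $\mu$ saturates the mass constraint, i.e.\ $(N-1)\|\nu\|=1-\|\rho\|$, and then builds $\rho_n$ by adding $N-1$ \emph{translated copies of $\nu$ itself}: with $\xi_1=0$ and distinct $\xi_2,\dots,\xi_N$, one sets $\rho_n=\mu+\sum_{k=1}^N(\tau_{n\xi_k})^\#\nu$ and uses the plan $P_n=P+\sum_{i=1}^N(\tau_{n\xi_{\sigma^i(1)}}\times\cdots\times\tau_{n\xi_{\sigma^i(N)}})^\#\nu$ for the cyclic shift $\sigma$. The point is that the added part of $P_n$ lives on the diagonal shifted by a rigid $N$-tuple, so every pairwise distance equals exactly $n|\xi_i-\xi_j|$, independent of the integration variable. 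The cost of the tail is therefore $N\|\nu\|\sum_{i<j}(n|\xi_i-\xi_j|)^{-1}\to0$ with no hypothesis on $\spt\nu$ at all.

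Your construction instead sends the excess mass to Dirac atoms $w_n^{(i)}$ at infinity; this is more flexible (it works for every admissible $\mu$, not just the saturating one, the slack $\beta'$ being parked at an $N$th atom), but produces the Newtonian-potential term $\int|y-w_n^{(i)}|^{-1}\,d\nu(y)$, which forces the truncation/diagonal step you flag. That step is fine (truncation only decreases $m_\nu$, so $\beta'\ge0$ persists, and $\Cb$ is weak* l.s.c.\ on the compact metrizable space $\PP^-$), and in fact one can also avoid it by noting that the spherical average of $w\mapsto\int|y-w|^{-1}d\nu(y)$ over $\{|w|=R\}$ is at most $\|\nu\|/R$, so suitable $w_n^{(i)}$ exist even for general $\nu$. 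Either way your proof goes through; the paper's translate-of-$\nu$ trick simply sidesteps the issue entirely.
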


Note that as a corollary of \eqref{CpartN} it holds 
\[
\int d\rho\le\frac{1}{N}\;\Longrightarrow\;\Cb(\rho)=0\qquad\forall\rho\in\PP^-
\]
which together with Remark \ref{remaC} yields that $\Cb \neq \Ccal$.

%
%

\begin{proof}
The convexity of $\Ccal$ follows from the linearity of the constraint $\pi^\#_iP=\rho$ for all $i$.

For the second statement, we first note that when $\int d\rho \le 1/N$, the minimum in \eqref{CpartN} is clearly $0$ and attained for $\mu=0$. Moreover, since $\Ccal$ is $1$-homogeneous the second constraint in \eqref{CpartN} may be replaced by $\int d\mu = \frac{N}{N-1}\left[\int d\rho-\frac{1}{N}\right]$ whenever $\int d\rho\ge 1/N$. Note that this minimum is then attained since the set of measures
$$\left\{\mu\ :\ 0\le\mu\le\rho, \ \int d\mu = \frac{N}{N-1}\left[\int d\rho-\frac{1}{N}\right] \right\}$$
is tight and $\Ccal$ is lower semicontinuous for the narrow convergence. We also note that \eqref{CpartN} obviously holds when $\rho \in \PP$ since $\mu = \rho$ is then the only admissible choice. 

Now let $\rho\in\PP^-$ with $\int \rho < 1$, we first assume that $\int d\rho\ge1/N$. Let $\mu$ such that $0\le\mu\le\rho$, $\int d\mu = \frac{N}{N-1}\left[\int d\rho-\frac{1}{N}\right]$ and $\mu$ optimal in \eqref{CpartN}. We set $\nu = \rho - \mu$. Let $\xi_1=0$, consider $N-1$ distinct vectors $\xi_2,\ldots,\xi_N\in\R^3\setminus\{0\}$, and set for every $n\in\N$
\[
\rho_{n,k}={\tau_{n,k}}^\#\nu,\qquad k\in\{1,\ldots,N\},
\]
where $\tau_{n,k} := \tau_{(n\,\xi_k)}$, being $\tau_\xi$ the translation by $\xi$ defined on $\R^3$ by $\tau_\xi : x \mapsto x+\xi$. We can now set
\[
\forall n, \qquad \rho_n := \rho+\sum_{k=2}^N\rho_{n,k}=\mu + \sum_{k=1}^N\rho_{n,k}
\]
and we note that $\rho_n$ belongs to $\PP$ for any $n$ and $\rho_n \weak \rho$ as $n \to +\infty$.
We also denote by $P$ an optimal plan for $\Ccal(\mu)$, then we set
\[ 
\forall n, \qquad P_n:=P + \sum_{i=1}^N (\tau_{n,\sigma^i(1)}\times\ldots\times\tau_{n,\sigma^i(N)})^\#\nu\;,
\]
where $\sigma$ is the permutation on $\{1,\ldots,N\}$ such that $\sigma(j)=j+1$ and $\sigma(N)=1$. Then for all $n$ the plan $P_n$ satisfies
\[
\forall j \in \{1,\ldots,N\}, \qquad \pi_j^\# P_n = \mu + \sum_{i=1}^N {\tau_{n,\sigma^i(j)}}^\#\nu = \rho_n
\] 
so $P_n$ is admissible for $C(\rho_n)$. We can now estimate
\begin{align*}
C(\rho_n) \; \le \int_{\R^{3N}} c\,dP_n & = \,
\int_{\R^{3N}} c\,dP + \sum_{i=1}^N \int c(\tau_{n,\sigma^i(1)}(x),\ldots,\tau_{n,\sigma^i(N)}(x)) \nu(dx) \\
& = \int_{\R^{3N}} c\,dP + N\sum_{1\le i<j\leq N} \frac{\Norm{\nu}}{n \norm{\xi_i-\xi_j}}
\end{align*}
from which we conclude that
$$\Cb(\rho)\le\liminf_{n\to+\infty}C(\rho_n) \leq \int_{\R^{3N}} c\,dP = \Ccal(\mu).$$
as required.

%

It remains to treat the case $\int d\rho \leq \frac{1}{N}$. It follows from the preceding that $\Cb(\rho) = 0$ for any $\rho$ such that $\int d\rho = \frac{1}{N}$, then by weak* lower semicontinuity of $\Cb$ and by approximation this also holds for any $\rho \in \PP^-$ with $\int d\rho \leq \frac{1}{N}$, which finishes the proof of \eqref{CpartN} in this case.
\end{proof}

The full characterization of $\Cb$ on $\PP^-$ is a quite involved problem, in particular it is an open
problem whether equality holds in \eqref{CpartN} for $N \geq 3$ but holds true for $N=2$.

\begin{prop}\label{prop-Cpart}
In the case $N=2$ it holds
\be\label{Cpart}
\Cb(\rho)=\min \left\{C(\mu)\ :\ 0\le\mu\le\rho,\ \int d\mu \ge 2 \int d\rho-1\right\}\qquad\forall\rho\in\PP^-.
\ee
\end{prop}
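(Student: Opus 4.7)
The inequality $\Cb(\rho)\le\min\{\cdots\}$ is exactly \eqref{CpartN} specialized to $N=2$, since $\tfrac{N}{N-1}\bigl(\int d\rho-\tfrac{1}{N}\bigr)=2\int d\rho-1$. I would prove the reverse inequality by a compactness and lower semicontinuity argument on transport plans, extracting a limit measure $\mu\le\rho$ of the prescribed mass from a recovery sequence.

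Pick a sequence $\rho_n\in\PP$ with $\rho_n\weak\rho$ and $\lim_n C(\rho_n)=\Cb(\rho)$, together with symmetric $\varepsilon_n$-optimal plans $P_n\in\PP(\R^6)$ for $C(\rho_n)$. By weak-$*$ compactness of bounded subsets of $\M^+(\R^6)$, up to a subsequence $P_n\weak P$ for some symmetric $P\ge 0$ with $\|P\|\le 1$. Set $\mu:=\pi_1^\# P=\pi_2^\# P$. Testing against $\varphi(x)\eta_R(y)\in C_c(\R^6)$ for arbitrary $\varphi\in C_c^+(\R^3)$ and a cutoff $\eta_R\uparrow 1$ in $C_c^+(\R^3)$, then letting first $n\to\infty$ and then $R\to\infty$, readily yields $\mu\le\rho$.

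The $N=2$-specific step is the mass lower bound $\|\mu\|\ge 2\|\rho\|-1$. For any $\varphi\in C_c(\R^3)$ with $0\le\varphi\le 1$, the pointwise positivity $(1-\varphi(x))(1-\varphi(y))\ge 0$ integrated against $P_n$ immediately gives
\[
\int \varphi(x)\varphi(y)\,dP_n\;\ge\; 2\int\varphi\,d\rho_n - 1.
\]
Since $\varphi\otimes\varphi\in C_c(\R^6)$ one may pass to the limit in $n$; then letting $\varphi$ increase to $1$ along a sequence in $C_c^+(\R^3)$ and using monotone convergence produces $\|P\|\ge 2\|\rho\|-1$, and hence $\|\mu\|=\|P\|\ge 2\|\rho\|-1$. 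This is the step I expect to be the main obstacle to the extension to $N\ge 3$: the natural analogue is the inclusion-exclusion identity coming from $\prod_i(1-\varphi(x_i))\ge 0$, which controls only a single linear combination of the quantities $\int\prod_{i\in S}\varphi(x_i)\,dP_n$, and does not seem to suffice to isolate a marginal $\mu$ of the required mass.

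Finally, since $c\ge 0$ is lower semicontinuous on $\R^6$, the map $Q\mapsto\int c\,dQ$ is lsc under weak-$*$ convergence (approximate $c$ from below by elements of $C_c(\R^6)$). As $P$ has both marginals equal to $\mu$, it is admissible in the definition of $\Ccal(\mu)$, so
\[
\Cb(\rho)=\lim_n\int c\,dP_n\;\ge\;\int c\,dP\;\ge\;\Ccal(\mu)\;\ge\;\min\Bigl\{\Ccal(\nu):0\le\nu\le\rho,\ \textstyle\int d\nu\ge 2\int d\rho-1\Bigr\}.
\]
The minimum is attained since the admissible set is tight (all candidates are dominated by the fixed sub-probability $\rho$) and $\Ccal$ is lsc for the narrow convergence, as already noted in the proof of Proposition \ref{C-not-lsc}. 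Combined with the $\le$ direction from \eqref{CpartN}, this proves \eqref{Cpart}.
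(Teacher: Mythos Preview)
Your proof is correct and follows essentially the same route as the paper's: extract a weak-$*$ limit $P$ of (near-)optimal plans, show its marginal $\mu$ satisfies $\mu\le\rho$ and $\|\mu\|\ge 2\|\rho\|-1$, and conclude by lower semicontinuity of $\int c\,dQ$. The only cosmetic difference is that the paper carries out the mass estimate via indicator functions of balls $B_R$ with $\rho(\partial B_R)=0$ (so that $P_n(B_R^2)\to P(B_R^2)$), whereas you use continuous cutoffs and the algebraic identity $(1-\varphi(x))(1-\varphi(y))\ge0$; this is the same inclusion--exclusion, and your formulation spares the Portmanteau-type continuity-set argument.
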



\begin{proof}
In order to obtain the reverse inequality of \eqref{CpartN}, consider a sequence $\rho_n$ in $\PP$ weakly* converging to $\rho \in \PP^-$. For each $n$ we denote by $P_n$ an optimal plan for $\rho_n$, then we may assume that $P_n\weak P$ for some non-negative Borel measure $P$ over $(\R^3)^2$, with marginals $\pi_1^\# P = \pi_2^\# P = \mu$ for some $\mu \in \PP^-$. Then one gets
\[\begin{split}
\liminf_{n\to+\infty}C(\rho_n)&=\liminf_{n\to+\infty}\int_{(\R^3)^2} c(x_1,x_2)\,P_n(dx_1,dx_2)\\
&\ge\int_{(\R^3)^2} c(x_1,x_2)\,P(dx_1,dx_2)\ge C(\mu)
\end{split}\]
We first claim that $\mu\le\rho$: indeed, let $\phi,\psi\in \C_c^\infty(\R^3)$ be non-negative and $0\le\psi\le1$, then we have
\[\begin{split}
\langle\rho,\phi\rangle&=\lim_{n\to+\infty}\int_{(\R^3)^2}\phi(x)P_n(dx,dy)\\
&\ge\lim_{n\to+\infty}\int_{(\R^3)^2}\phi(x)\psi(y)P_n(dx,dy)\\
&=\int_{(\R^3)^2}\phi(x)\psi(y)P(dx,dy).
\end{split}\]
Now letting $\psi\nearrow1$ we obtain by the monotone convergence theorem that
\[
\langle\rho,\phi\rangle\ge\int_{(\R^3)^2}\phi(x)\,P(dx,dy)=\langle\mu,\phi\rangle
\]
and, since this is true for any non-negative test function $\phi$, we get the claim.

It remains to prove that $\int d\mu\ge2\int d\rho-1$. For this, consider a ball $B_R$ centered at the origin such that $\rho(\partial B_R)=0$; from what seen above this implies $\mu(\partial B_R)=P(\partial B_R^2)=0$. Then for all $n\in\N$ one has
\[
\rho_n(B_R)=P_n(B_R^2)+P_n(B_R\times B_R^c)\le P_n(B_R^2)+\rho_n(B_R^c)=P_n(B_R^2)+1-\rho_n(B_R)
\]
so that, passing to the limit as $n\to\infty$ gives
$$2\rho(B_R)-1\le P(B_R^2)\le\mu(B_R),$$
and then the claim follows by letting $R$ go to $+\infty$.
\end{proof}

\begin{rema}\label{CbN2}
As a corollary of Propositions \ref{C-not-lsc} and \ref{prop-Cpart}, it appears that when $N=2$ one has $\Cb(\rho) >0$ if and only if $\int d\rho\ge1/2$: indeed, if $\int d\rho\ge1/2$, since the minimum giving $\Cb(\rho)$ is attained for some $\mu$ with $\int d\mu = 2\int d\rho-1 >0$, we obtain $\mu \neq 0$ so that $\Cb(\rho)=C(\mu)>0$.
\end{rema}

%
%

\section{Basic inequalities, properties of the sequence and rescaling}\label{sresca}
We now get back to the preliminary analysis of $F_\eps$. As proved by Lieb (Theorem 1.1 and Theorem 1.2 of \cite{lieb83}) the correct space where to minimize the functional $F_\eps$ is
$$\HH=\left\{\rho\in L^1(\R^3)\ :\ \rho\ge0,\ \int\rho\,dx =1,\ \sqrt\rho\in H^1(\R^3)\right\}.$$

\begin{prop} For every $\eps >0$ the functional $F_\eps$ is convex.
\end{prop}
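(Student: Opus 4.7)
The plan is to decompose $F_\eps(\rho)=\eps T(\rho)+b\,C(\rho)-U(\rho)$ as a non-negative linear combination of three convex functionals on $\HH$, and then conclude by convexity of sums with non-negative weights. The potential contribution $\rho\mapsto -U(\rho)=-\int V\,d\rho$ is affine since $V$ is a fixed potential, hence trivially convex. The correlation term $C$ is convex by the first statement in Proposition \ref{C-not-lsc}: the linearity of the marginal constraint $\pi_i^\# P=\rho$ ensures that, given two densities $\rho_0$ and $\rho_1$ with admissible plans $P_0$ and $P_1$, the convex combination $(1-t)P_0+tP_1$ is admissible for $(1-t)\rho_0+t\rho_1$, so that taking infima preserves convexity.

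The only step that is not completely tautological is the convexity of the kinetic term. I would write $T(\rho)=\frac{1}{4}\int\frac{|\nabla\rho|^2}{\rho}\,dx$ and invoke the classical fact that the perspective function $(p,r)\mapsto |p|^2/r$, extended by $0$ at $(0,0)$ and $+\infty$ when $r=0$ with $p\neq 0$, is convex and lower semicontinuous on $\R^3\times[0,+\infty)$. For $\rho_0,\rho_1\in\HH$ and $t\in[0,1]$, set $\rho_t=(1-t)\rho_0+t\rho_1$, so that $\nabla\rho_t=(1-t)\nabla\rho_0+t\nabla\rho_1$ almost everywhere. Pointwise convexity of the perspective function applied to $(\nabla\rho_0,\rho_0)$ and $(\nabla\rho_1,\rho_1)$ then yields
\[
\frac{|\nabla\rho_t|^2}{\rho_t}\le (1-t)\,\frac{|\nabla\rho_0|^2}{\rho_0}+t\,\frac{|\nabla\rho_1|^2}{\rho_1}
\]
with the convention above, and integrating gives $T(\rho_t)\le(1-t)\,T(\rho_0)+t\,T(\rho_1)$.

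The main (mild) obstacle is to make sense of the pointwise inequality on the Lebesgue-negligible set where $\rho_0$ or $\rho_1$ vanishes; this is handled once and for all by the lower semicontinuous extension of the perspective function, and is consistent with the alternative representation $T(\rho)=\int|\nabla\sqrt{\rho}|^2\,dx$, which is meaningful on $\HH$ by definition. Assembling the three convex summands with the non-negative coefficients $\eps$, $b$ and $1$ concludes the proof.
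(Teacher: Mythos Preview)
Your proof is correct and follows essentially the same approach as the paper: decompose $F_\eps$ into its three summands, use the convexity of the perspective function $(p,r)\mapsto |p|^2/r$ for the kinetic term, the linearity of the marginal constraints for $C$, and the linearity of $U$. Your write-up is simply more detailed than the paper's sketch, in particular on the pointwise perspective inequality and the handling of the vanishing set.
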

\begin{proof}
To prove convexity we look separately at the three terms which compose the functional $F_\eps$. The first term is convex with respect to the pair $(\rho,\nabla \rho)$ thanks to the convexity of the function $(s,v)\mapsto|v|^2/s$ on $\R^+\times\R^d$. We notice that the kinetic energy can be also written in the form
$$T(\rho)=\int|\nabla\sqrt\rho|^2\,dx.$$

The transport cost $C(\rho)$ defined in \eqref{defC} is a linear problem with respect to the probability $P$ on $\R^{3N}$, with linear constraints $\pi_i^\# P = \rho$, so that it is convex. The term $U(\rho)$ is linear with respect to $\rho$.
\end{proof}

The different behavior of the three terms with respect to the action of homotheties on the probability measures has a relevance in the study of the $\Gamma$-limit.

\begin{defi}
For every probability measure $\rho \in \PP$ and every $s>0$ we set
\[
\rho^s = h_s^\# \rho
\]
where $h_s:x\mapsto x/s$.
\end{defi}
 
\begin{rema}
Note in particular that if $\rho$ has a compact support then the support of $\rho^ s$ is $h_s\big(supp(\rho)\big)$. We shall use in the following sections that $\rho^s\rightharpoonup\delta_0$ as $s\to+\infty$, which straightly follows from the definition. Also notice that the map $\rho\mapsto\rho^s$ has for inverse $\rho\mapsto\rho^{1/s}$. Finally, since the elements $\rho\in\PP$ for which the functionals $G_\eps$ are defined are in $L^1$, it is interesting to note that in this case the probability $\rho^s$ is also in $L^1$ with density
$$\rho^s(x)=s^3\rho(sx)\,.$$
\end{rema}

When $M=1$ and $X_1=0$, a simple calculation shows that the three terms in $F_\eps$ scale as follows:
$$T(\rho^s)=s^2T(\rho),\qquad C(\rho^s)=sC(\rho),\qquad U(\rho^s)=U_0(\rho^s)=sU_0(\rho).$$
Therefore, minimizing the functional \eqref{Feps} with respect to $\rho^s$ leads to minimize the quantity
$$\eps s^2T(\rho)+s\big(bC(\rho)-U(\rho)\big).$$
A first minimization with respect to the variable $s\ge0$ reduces the problem $\displaystyle \inf_\rho F_\eps$ to minimize with respect to $\rho$ the ratio
\[
-\frac{1}{4\eps}\frac{\big(U(\rho)-bC(\rho)\big)_+^2}{T(\rho)}
\]
where $(\cdot)_+$ denotes the positive part function. Then
\be\label{ratio}
\inf_{\rho \in \Pcal} F_\eps(\rho)=-\frac{K}{\eps}\;,
\ee
where 
$$K=\sup_{\rho \in \Pcal} \frac{\big(U(\rho)-bC(\rho)\big)_+^2}{4T(\rho)}\;.$$
This last supremum is finite as a consequence of the inequality $\big(U-bC\big)_+^2\le U^2$ and of Lemma \ref{l1} below applied to $u^2=\rho$ for any probability $\rho$ in the domain of $F_\eps$.

When $M>1$ the potential term $U$ does not have anymore the scaling property above so we use the following estimate (setting $Z=\sum_{1\le k\le M}Z_k$):
$$F_\eps(\rho)=\sum_{1\leq k\leq M}\frac{Z_k}{Z}\left[\eps T(\rho)+bC(\rho)-\int\frac{Z\rho(x)}{|x-X_k|}dx\right],$$
so that
\begin{align*}
\sum_{1\le k\le M}\frac{Z_k}{Z}\inf_\rho\left[\eps T(\rho)+bC(\rho)-\int\frac{Z\rho(x)}{|x-X_k|}dx\right] & \le \inf_\rho F_\eps(\rho)\\
&\le\inf_\rho\left[\eps T(\rho)+bC(\rho)-\int\frac{Z_1\rho(x)}{|x-X_1|}dx\right]\;.
\end{align*}
It follows that, for suitable positive constants $K_1$ and $K_2$, we have
$$-\frac{K_1}{\eps}\le\inf_\rho F_\eps(\rho)\le-\frac{K_2}{\eps}\;.$$
In conclusion the minimal values of $F_\eps$ tend to $-\infty$ with order $1/\eps$. This justifies the introduction of the rescaled functionals
$$G_\eps(\rho)=\eps F_\eps(\rho).$$
Due to the lack of compactness for the tight convergence, it is not clear whether the functionals $F_\eps$ and $G_\eps$ admit minimizers in $\PP$. As we are interested in the asymptotics as $\eps\to0$, we will use the following asymptotic notion of minimizer.

\begin{defi}\label{defminim}
We say that $\{\rho_\eps\}_\eps\in\PP$ is asymptotically minimizing for $G_\eps$ (respectively for $F_\eps$) whenever
$$\lim_{\eps\to0}\left[G_\eps(\rho_\eps)-\inf_{\PP}G_\eps\right]=0\qquad\left(\hbox{respectively }\lim_{\eps\to0}\frac{F_\eps(\rho_\eps)-\inf_{\PP} F_\eps}{\eps}=0\right).$$
\end{defi}

Let us emphasize that such a sequence of asymptotic minimizers converge (up to subsequences) weakly* to an element of $\PP^-$.
We may now apply to the family $G_\eps$ the $\Gamma$-convergence theory in order to identify the 
$\Gamma$-limit functional $G$ and so, as a consequence, the behaviour of the asymptotically minimizing sequences, that will converge to minimizers of $G$.

The functional $G_\eps$ has also a physical interest by itself and deserves to be written explicitly
$$G_\eps(\rho)=\eps^2 T(\rho)+\eps bC(\rho)-\eps U(\rho).$$
The same homogeneities of the terms in the functional $F_\eps$ allow us to rewrite
\be\label{Dbond}
G_\eps(\rho)=T(\rho^\eps)+ bC(\rho^\eps)-\sum_{1\leq k\leq M}\int\frac{Z_k \rho^\eps (x)}{|x-X_k/\eps|}\,dx.
\ee
Then letting $\eps$ go to $0$ is equivalent to let the distance between the nuclei go to $+\infty$. For this reason, when considering a molecule, the limit as $\eps\to0$ of $G_\eps$ models the dissociation of chemical bonds between the atoms composing the molecule.

In the following we denote by $G^+$ and by $G^-$ respectively the $\Gamma$-limsup and the $\Gamma$-liminf of the family $G_\eps$. Since the space $\PP^-$ endowed with the weak* convergence is metrizable and compact, by the general theory of the $\Gamma$-convergence (see for instance \cite{dalm}) we have that a subsequence of $G_\eps$ (that we still continue to denote by $G_\eps$) $\Gamma$-converges to some functional $G$. If we are able to fully characterize this limit functional $G$ independently of the subsequence, we obtain that the full family $G_\eps$ is $\Gamma$-convergent to $G$. Therefore, in the following we may assume that $G_\eps$ $\Gamma$-converges to some functional $G$ and we concentrate our efforts in obtaining a characterization of $G$ in terms of the data only.

Since in general weak* limits of sequences of probabilities only belong to $\PP^-$, we consider $G^-$, $G^+$, and $G$ as defined on $\PP^-$. As a basic consequence of $\Gamma$-convergence theory (see \cite{att84}) we have the following result.

\begin{prop} We have $G^-\le G^+$; moreover the functionals $G^-$ and $G^+$ are both weakly* lower-semicontinuous, and $G^+$ is convex.
\end{prop}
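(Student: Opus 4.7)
The plan is to establish the three statements in order, using only standard $\Gamma$-convergence arguments together with the convexity of $G_\eps$ proved in the previous proposition. The inequality $G^-\le G^+$ is immediate from the definitions: for any sequence $\rho_\eps\weak\rho$ in $\PP$ one has $\liminf_\eps G_\eps(\rho_\eps)\le\limsup_\eps G_\eps(\rho_\eps)$, and taking the infimum over all such sequences on both sides yields the inequality at any $\rho\in\PP^-$.

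For the weak* lower semicontinuity of $G^-$ and $G^+$, I would exploit the fact—already recorded in the paper—that $\PP^-$ endowed with the weak* convergence is compact and metrizable. Given $\rho^k\weak\rho$ with $\liminf_k G^\pm(\rho^k)<+\infty$, my approach is a diagonal extraction: for each $k$, choose an admissible sequence $(\rho^k_\eps)_\eps$ with $\rho^k_\eps\weak\rho^k$ and $\liminf_\eps G_\eps(\rho^k_\eps)$ (respectively $\limsup_\eps$) within $1/k$ of $G^\pm(\rho^k)$. Denoting by $d$ a metric inducing the weak* topology on $\PP^-$, I would then select $k(\eps)\to\infty$ slowly enough to guarantee $d(\rho^{k(\eps)}_\eps,\rho^{k(\eps)})\to 0$; the triangle inequality gives $\rho^{k(\eps)}_\eps\weak\rho$, and this diagonal sequence produces the lower semicontinuity inequality $G^\pm(\rho)\le\liminf_k G^\pm(\rho^k)$.

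The convexity of $G^+$ will follow from the convexity of each $G_\eps$ (since $G_\eps=\eps F_\eps$ with $\eps>0$). Given $\rho,\mu\in\PP^-$ and $t\in[0,1]$, pick recovery sequences $\rho_\eps\weak\rho$ and $\mu_\eps\weak\mu$ with $\limsup_\eps G_\eps(\rho_\eps)\le G^+(\rho)$ and $\limsup_\eps G_\eps(\mu_\eps)\le G^+(\mu)$. Since weak* convergence is preserved by linear combinations, $t\rho_\eps+(1-t)\mu_\eps\weak t\rho+(1-t)\mu$, and the convexity of $G_\eps$ gives
\[
G_\eps\bigl(t\rho_\eps+(1-t)\mu_\eps\bigr)\le tG_\eps(\rho_\eps)+(1-t)G_\eps(\mu_\eps).
\]
Taking the $\limsup$ on both sides and invoking the definition of $G^+$ at $t\rho+(1-t)\mu$ yields the desired convexity inequality.

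The main technical point is the diagonal argument for the lower semicontinuity of $G^\pm$: the index $k(\eps)$ must be chosen so as to balance the closeness of $\rho^{k(\eps)}_\eps$ to $\rho^{k(\eps)}$ against letting $k(\eps)\to\infty$, and the metrizability of $(\PP^-,\weak)$ is precisely what makes such an extraction possible. Beyond this standard construction, the proof is routine. Note that the analogous convexity for $G^-$ cannot be obtained by the same scheme—only the convexity of $G^+$ is inherited from that of $G_\eps$ through a recovery-sequence argument—which explains why the proposition restricts the convexity claim to the $\Gamma$-limsup.
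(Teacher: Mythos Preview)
Your argument is correct and follows the standard route; the paper itself does not give a proof but simply records the proposition as ``a basic consequence of $\Gamma$-convergence theory'', citing Attouch's monograph \cite{att84}. The diagonal extraction you outline is precisely the classical argument for the lower semicontinuity of the $\Gamma$-upper and $\Gamma$-lower limits (see also Dal Maso \cite{dalm}, Proposition~6.8), and your convexity argument for $G^+$ via convex combinations of recovery sequences is the standard one.

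One minor refinement worth noting: for $G^-$ the diagonalization is more naturally indexed by $k$ rather than by $\eps$, since the hypothesis $\liminf_\eps G_\eps(\rho^k_\eps)\le G^-(\rho^k)+1/k$ only guarantees \emph{arbitrarily small} $\eps$ (not all small $\eps$) with $G_\eps(\rho^k_\eps)$ close to $G^-(\rho^k)$; one therefore picks, for each $k$, a single $\eps(k)\to0$ realizing this and checks that the resulting sequence does the job. This is a routine adjustment and does not affect the validity of your scheme.
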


If we forget about the electron-electron interaction, i.e. the optimal transport term $C$ in $G_\eps$, we obtain an estimate from below and at the same time an easier problem to work with. We will refer to this as the {\it``non-interacting case''} and the corresponding functionals will be denoted as
$$G_\eps^0=\eps^2 T(\rho)-\eps U(\rho).$$

We characterize first a wide space on which $G=G^+=G^-=0$.

\begin{lemm}\label{l1}
There exists a constant $\kappa$ such that for every domain $\O$ (bounded or not) we have
$$\bigg[\int_\O\frac{u^2}{|x|}\,dx\bigg]^2\le \kappa \int_\O u^2\,dx\int|\nabla u|^2\,dx\qquad\forall u\in H^1(\R^3).$$
\end{lemm}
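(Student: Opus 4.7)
The inequality is a standard consequence of Hardy's inequality in $\R^3$ combined with Cauchy--Schwarz, so the plan is short.

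First, I would split the singular integral using Cauchy--Schwarz by writing $\frac{u^2}{|x|} = \frac{|u|}{|x|}\cdot |u|$, which yields
\[
\int_\O \frac{u^2}{|x|}\,dx \;\le\; \Bigl(\int_\O \frac{u^2}{|x|^2}\,dx\Bigr)^{1/2}\,\Bigl(\int_\O u^2\,dx\Bigr)^{1/2}.
\]
Squaring this inequality already produces the desired product structure, with $\int_\O u^2\,dx$ on the right-hand side and the more singular term $\int_\O u^2/|x|^2\,dx$ isolated.

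Next, I would estimate the $1/|x|^2$ integral by the classical Hardy inequality in three dimensions,
\[
\int_{\R^3}\frac{u^2}{|x|^2}\,dx \;\le\; 4\int_{\R^3}|\nabla u|^2\,dx\qquad\forall u\in H^1(\R^3),
\]
which one can establish by an elementary integration by parts against the radial vector field $x/|x|^2$ (noting $\dive(x/|x|^2) = 1/|x|^2$ in $\R^3\setminus\{0\}$) together with Cauchy--Schwarz; see e.g. the textbook treatments in the PDE literature. Since $\O\subset\R^3$, the integral on $\O$ is bounded by the integral on $\R^3$, hence
\[
\int_\O \frac{u^2}{|x|^2}\,dx \;\le\; 4\int_{\R^3}|\nabla u|^2\,dx.
\]

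Combining the two steps gives
\[
\Bigl(\int_\O \frac{u^2}{|x|}\,dx\Bigr)^2 \;\le\; 4\int_\O u^2\,dx\;\int_{\R^3}|\nabla u|^2\,dx,
\]
so the claim holds with $\kappa=4$. There is no serious obstacle: the only nontrivial ingredient is the Hardy inequality, whose constant $4$ is sharp in dimension three. Since the right-hand side above already features the two factors exactly in the form requested, no further refinement is needed.
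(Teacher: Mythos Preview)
Your proof is correct and in fact yields the explicit constant $\kappa=4$. The paper takes a different route: it splits the integral over $\O\cap B_\delta$ and $\O\setminus B_\delta$, bounds the far part trivially by $\delta^{-1}\int_\O u^2$, bounds the near part via the Sobolev embedding $H^1(\R^3)\hookrightarrow L^6(\R^3)$ together with H\"older (using $1/|x|\in L^{3/2}(B_\delta)$), and then optimizes over $\delta>0$ to obtain the product form. Your approach is more direct and more elementary, relying only on Cauchy--Schwarz and the sharp Hardy inequality; it avoids any appeal to Sobolev embedding and produces an explicit optimal-looking constant. The paper's method, on the other hand, generalizes more readily to potentials $V\in L^{3/2}+L^\infty$ (as alluded to earlier in the paper), whereas your argument is tied to the specific Coulomb singularity $1/|x|$.
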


\begin{proof}
By using the embedding of $H^1(\R^3)$ into $L^6(\R^3)$ we have for every $\delta>0$
\[\begin{split}
\int_\O\frac{u^2}{|x|}\,dx&\le\int_{\O\setminus B_\delta}\frac{u^2}{\delta}\,dx+\int_{B_\delta}\frac{u^2}{|x|}\,dx\\
&\le\frac{1}{\delta}\int_\O u^2\,dx+\|u\|^2_{L^6(\R^3)}\big\|1/|x|\big\|_{L^{3/2}(B_\delta)}\\
&\le\frac{1}{\delta}\int_\O u^2\,dx+\kappa\, \delta\int|\nabla u|^2\,dx.
\end{split}\]
Optimizing with respect to $\delta$ gives
$$\int_\O\frac{u^2}{|x|}\,dx\le2\bigg[\kappa \int_\O u^2\,dx\int|\nabla u|^2\,dx\bigg]^{1/2}$$
as required.
\end{proof}

\begin{prop}\label{l2}
For every probability $\rho$ we have
\be\label{ineq2}
-\frac{\kappa\,M}{4}\sum_{1\le k\le M}Z^2_k\rho(\{X_k\})\le G^-(\rho)\le G^+(\rho)\le0,
\ee
where $\kappa$ is given in Lemma \ref{l1}. In particular $G^- (\rho)=G^+(\rho)=0$ for every probability $\rho$ 
that does not charge any of the points $X_k$.
\end{prop}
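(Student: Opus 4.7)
I would prove the two bounds in \eqref{ineq2} separately; the final sentence then follows by combining them with $G^-\le G^+$.

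\textbf{Upper bound $G^+(\rho)\le 0$.} The plan is to build a recovery sequence by regularization. Given $\rho\in\PP$, pick smooth compactly supported probabilities $\rho_n\in\HH$ with $\rho_n\weak\rho$ (via truncation, convolution and renormalization). For each such $\rho_n$ all three terms $T(\rho_n)$, $C(\rho_n)$ (finite by Proposition \ref{ntensor}) and $U(\rho_n)$ are finite, hence $G_\eps(\rho_n)\to 0$ as $\eps\to 0$. A standard diagonal extraction yields $\eps_n\to 0^+$ with $G_{\eps_n}(\rho_n)\to 0$, which delivers the upper bound.

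\textbf{Lower bound.} Let $\rho\in\PP$ and $\rho_\eps\in\PP$ with $\rho_\eps\weak\rho$. Dropping the nonnegative term $\eps b\,C(\rho_\eps)$ one has $G_\eps(\rho_\eps)\ge\eps^2 T(\rho_\eps)-\eps U(\rho_\eps)$. Fix $\delta>0$ and set $F_k:=\overline{B_\delta(X_k)}$; I would split each Coulomb integral $U_{X_k}(\rho_\eps)$ into contributions from $F_k$ and from $F_k^c$. The exterior piece is bounded by $\delta^{-1}$. For the interior piece, Lemma \ref{l1} applied on $\O=B_\delta(0)$ to the translate $v(x)=\sqrt{\rho_\eps}(x+X_k)\in H^1(\R^3)$ yields
\[
\int_{F_k}\frac{d\rho_\eps}{|x-X_k|}\le\sqrt{\kappa\,\rho_\eps(F_k)\,T(\rho_\eps)}.
\]
Summing over $k$ with weights $Z_k$ and applying Cauchy-Schwarz to $\sum_k Z_k\sqrt{\rho_\eps(F_k)}$ gives
\[
U(\rho_\eps)\le\sqrt{\kappa M\,T(\rho_\eps)\sum_k Z_k^2\rho_\eps(F_k)}+\frac{1}{\delta}\sum_k Z_k.
\]
Inserting this and minimizing in $t=\eps\sqrt{T(\rho_\eps)}\ge 0$ via the elementary inequality $t^2-\beta t\ge-\beta^2/4$ produces
\[
G_\eps(\rho_\eps)\ge-\frac{\kappa M}{4}\sum_k Z_k^2\,\rho_\eps(F_k)-\frac{\eps}{\delta}\sum_k Z_k.
\]
Since $\rho_\eps,\rho\in\PP$, the weak* convergence is in fact narrow, so Portmanteau gives $\limsup_\eps\rho_\eps(F_k)\le\rho(F_k)$. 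Taking $\liminf$ in $\eps$ (the $\eps/\delta$ term vanishes) and then letting $\delta\to 0$ (using $\rho(F_k)\downarrow\rho(\{X_k\})$ by continuity from above of the finite measure $\rho$) yields $G^-(\rho)\ge -\frac{\kappa M}{4}\sum_k Z_k^2\rho(\{X_k\})$.

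\textbf{Main obstacle.} The real content is the localization of Lemma \ref{l1} at each nucleus paired with a Cauchy-Schwarz that isolates precisely the coefficients $Z_k^2\rho_\eps(F_k)$, so that upon passage to the limit only the point masses of $\rho$ at the nuclei survive. The remaining ingredients (diagonalization, Portmanteau, elementary minimization) are routine.
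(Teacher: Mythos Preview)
Your proposal is correct and follows essentially the same route as the paper. The paper proceeds identically on the lower bound---drop $C\ge0$, split each $U_{X_k}$ at radius $\delta$, apply Lemma~\ref{l1} on the near part, then use the quadratic inequality $\eps^2 A-\eps B\ge -B^2/(4A)$---the only cosmetic difference being that the paper keeps the square $\big(\sum_k Z_k\sqrt{\rho_\eps(B_\delta)}\big)^2$ through both limits and applies Cauchy--Schwarz at the very end, whereas you apply it before minimizing in $\eps$; for the upper bound the paper invokes the lower semicontinuity of $G^+$ rather than a diagonal extraction, but the content is the same.
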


\begin{proof}
By Proposition \ref{ntensor} we have for every smooth $\rho$ with compact support
$$G_\eps(\rho)=\eps^2T(\rho)+\eps bC(\rho)-\eps U(\rho)\le K\eps$$
for a suitable constant $K$ depending on $\rho$. Therefore
$$G^+(\rho)\le\lim_{\eps\to0}G_\eps(\rho)\le0,$$
and the last inequality in \eqref{ineq2} follows by approximation and by the lower semicontinuity of $G^+$.

Let now $\rho_\eps$ be a generic sequence weakly* converging to $\rho$; since the transport cost $C(\rho_\eps)$ is nonnegative we have, setting $u_\eps^2=\rho_\eps$,
$$G_\eps(\rho_\eps)\ge\eps^2\int|\nabla u_\eps|^2\,dx-\eps\sum_{1\le k\le M}Z_k\left[\int_{B_\delta(X_K)}\frac{u_\eps^2}{|x-X_k|}\,dx+\int_{(B_\delta(X_k))^c}
\frac{u_\eps^2}{|x-X_k|}\,dx\right].$$
By using Lemma \ref{l1} and the fact that $|x-X_k|\ge\delta$ on $B_\delta(X_k)^c$ we obtain
$$G_\eps(\rho_\eps)\ge\eps^2\int|\nabla u_\eps|^2\,dx-\eps\sum_{1\le k\le M}Z_k\left[\kappa\int_{B_\delta(X_K)}u_\eps^2\,dx\int|\nabla u_\eps|^2\,dx\right]^{1/2}-\eps\frac{Z}{\delta}\;.$$
Since $\eps^2 A-\eps B\ge-B^2/(4A)$, the sum of the first two terms in the last line gives
$$G_\eps(\rho_\eps)\ge-\frac{\kappa}{4}\left(\sum_{1\le k\le M}Z_k\bigg[\int_{B_\delta(X_K)}u_\eps^2\,dx\bigg]^{1/2}\right)^2-\eps\frac{Z}{\delta}\;.$$
As $\eps\to0$ we obtain, for every $\delta>0$,
$$G^-(\rho)\ge-\frac{\kappa}{4}\left(\sum_{1\le k\le M}Z_k\Big[\rho(\overline{B_\delta(X_k)})\Big]^{1/2}\right)^2,$$
and finally, as $\delta\to0$,
$$G^-(\rho)\ge-\frac{\kappa}{4}\left(\sum_{1\le k\le M}Z_k\Big[\rho(\{X_k\})\Big]^{1/2}\right)^2\ge-\frac{\kappa\,M}{4}\sum_{1\le k\le M}Z^2_k\rho(\{X_k\})$$
which concludes the proof.
\end{proof}

\section{The non-interacting case}\label{sprel}

In this case, thanks to the absence of the transport term $C$, we are able to identify 
the limit functional $G$ in a complete way. In order to stress the fact that $b=0$ we denote the sequence by $G_\eps^0$ and the limit by $G^0$.

\subsection{The hydrogen atom}
The simplest case is $N=M=1$; in other words we have a single nucleus with charge $Z$ located at a point $X_1$ (that without loss of generality we can take the origin) and a single electron.
In this is situation the non interacting case maintains a physical meaning. The problem \eqref{ratio} then reduces to
$$\inf_{\rho\in\PP}F^0_\eps(\rho)
=\inf_{\rho\in\PP}\big\{\eps T(\rho)-U(\rho)\big\}
=\inf_{\rho\in\PP}\left\{-\frac{1}{4\eps}\frac{\big(U(\rho)\big)^2}{T(\rho)}\right\}
=\frac{Z^2}{\eps}\inf_{\rho\in\PP}\left\{-\frac{\big(U_0(\rho)\big)^2}{4\,T(\rho)}\right\}\,.$$
The value of the problem on the right hand side is the first eigenvalue (negative) of the operator
$$-\Delta-\frac{1}{|x|}\;.$$
This is known to be equal to $-1/4$ with eigenfunctions proportional to
$$\rhob=\frac{1}{32\pi} e^{-|x|/2}$$
(see for instance Example 11.10 in \cite{ll97}). Summarizing, in the case $N=M=1$ the minimizer $\rhob_\eps$ of the functional $F^0_\eps$ (or equivalently of the rescaled functional $G^0_\eps$) is equal to
$$\rhob_\eps(x) = \rhob^{1/\eps}(x) = \frac{1}{4\pi(2\eps)^3}e^{-|x|/(2\eps)}\;,$$
and tends, as $\eps\to0$, to the measure $\rhob(x)= \delta_0$ with the minimal values $G_\eps(\rhob_\eps)\to-Z^2/4$. In fact, by applying Theorem \ref{liebG} to this particular case, we infer that the $\Gamma$-limit functional $G^0$ is indentified on $\PP^-$ as
$$G^0(\rho)=-\frac{Z^2}{4}\rho(\{0\}).$$
From the previous discussion we deduce
\be\label{usualtrick}
-\frac{Z^2}{4} \;=\; \inf_{\rho\in\PP}\big\{ T(\rho)- Z\, U_0(\rho)\big\} \;=\; \inf_\rho\left\{\int\frac{|\nabla\rho|^2}{4\rho}\,dx-\int\frac{Z\rho}{|x|}\,dx\right\}.
\ee

\subsection{The general case $N, M\ge 1$}

We start by a localization lemma.

\begin{lemm}[Localization]\label{localize}Let $\rho \in\PP$, and let $\delta>0$. Let $\theta_\delta$ be a smooth cut-off function such that $ \frac{|\nabla \theta_\delta|^2}{\theta}$ is continuous and such that
\be\label{thetad}
\begin{cases}
\theta_\delta=1&\hbox{on } \displaystyle A_\delta:=\bigcup_{i=1}^M B(X_i,\delta),\\
\theta_\delta=0&\hbox{outside } \displaystyle \bigcup_{i=1}^M B(X_i,2\delta),\\
0\le\theta_\delta\le1&\hbox{on }\R^3,\\
\ds \norm{\frac{|\nabla \theta_\delta|^2}{\theta_\delta}-2 \Delta \theta_\delta} \leq K. &
\end{cases}\ee
Let $\nu$ be smooth and compactly supported function such that:
\be\label{nueps}\begin{cases}
\eps \Tcal(\nu) \leq K,\\
\theta_\delta\rho+\nu\in\PP,\\
\dist\big(\spt\nu,\{X_1,\dots,X_M\big)\ge3\delta.
\end{cases}\ee
Then, setting $Z=\sum_{1\le k\le M}Z_k$, we have
$$G^0_\eps(\theta_\delta\rho+\nu)\le G^0_\eps(\rho)+\eps\left( (1+\eps)K+\frac{Z}{\delta} \right).$$
\end{lemm}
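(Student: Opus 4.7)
The plan is to bound the kinetic and potential parts of $G_\eps^0(\theta_\delta\rho + \nu)$ separately against those of $\rho$, using that the cut-off piece $\theta_\delta \rho$ and the added mass $\nu$ have disjoint supports. Since $\spt(\theta_\delta \rho) \subset \bigcup_i B(X_i, 2\delta)$ while $\dist(\spt \nu, \{X_1,\dots,X_M\}) \geq 3\delta$, both $T$ and $U$ split additively, yielding
$$G_\eps^0(\theta_\delta \rho + \nu) = \eps^2 T(\theta_\delta \rho) + \eps^2 T(\nu) - \eps U(\theta_\delta \rho) - \eps U(\nu).$$
Dropping the nonnegative $\eps U(\nu)$ term and using $\eps^2 T(\nu) = \eps \cdot \eps\,\Tcal(\nu) \leq \eps K$, the problem reduces to comparing $\eps^2 T(\theta_\delta \rho) - \eps U(\theta_\delta \rho)$ with $G_\eps^0(\rho)$.

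For the potential I would observe that $1 - \theta_\delta$ vanishes on $A_\delta = \bigcup_i B(X_i, \delta)$, so on its support $|x - X_k| \geq \delta$ for every $k$; hence
$$U(\rho) - U(\theta_\delta \rho) = \sum_{k=1}^M Z_k \int \frac{(1-\theta_\delta)(x)}{|x-X_k|} \rho(dx) \leq \frac{Z}{\delta}.$$

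For the kinetic term I would expand $\sqrt{\theta_\delta \rho} = \sqrt{\theta_\delta}\sqrt{\rho}$ by the Leibniz rule, noting that the continuity hypothesis on $|\nabla \theta_\delta|^2/\theta_\delta$ forces $\theta_\delta$ to vanish to order two at its zeros, so that $\sqrt{\theta_\delta}$ is Lipschitz and $\sqrt{\theta_\delta \rho} \in H^1(\R^3)$. A direct computation then gives
$$|\nabla \sqrt{\theta_\delta \rho}|^2 = \theta_\delta\, |\nabla \sqrt{\rho}|^2 + \frac{|\nabla \theta_\delta|^2}{4 \theta_\delta}\, \rho + \tfrac12\, \nabla \theta_\delta \cdot \nabla \rho,$$
and integrating together with integration by parts in the cross term (legitimate since $\theta_\delta \in C_c^\infty$ and $\nabla \rho = 2\sqrt{\rho}\,\nabla\sqrt{\rho} \in L^1$) yields
$$T(\theta_\delta \rho) = \int \theta_\delta\, |\nabla \sqrt{\rho}|^2\,dx + \frac{1}{4} \int \left(\frac{|\nabla \theta_\delta|^2}{\theta_\delta} - 2 \Delta \theta_\delta\right) \rho\,dx \leq T(\rho) + \frac{K}{4},$$
where the final inequality uses $0 \leq \theta_\delta \leq 1$, $\|\rho\|=1$, and the pointwise bound in \eqref{thetad}.

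Combining the three estimates produces
$$G_\eps^0(\theta_\delta \rho + \nu) \leq G_\eps^0(\rho) + \frac{\eps^2 K}{4} + \eps K + \eps \frac{Z}{\delta} \leq G_\eps^0(\rho) + \eps\bigl((1+\eps)K + \tfrac{Z}{\delta}\bigr),$$
as required. The main technical point will be justifying the Leibniz/integration-by-parts manipulation giving the formula for $T(\theta_\delta\rho)$, since $\sqrt{\rho}$ is only in $H^1$ and $\theta_\delta$ may vanish; this is exactly what the precise smoothness requirements on $\theta_\delta$ in \eqref{thetad} are designed to accommodate, guaranteeing that $\sqrt{\theta_\delta\rho}\in H^1(\R^3)$ and that all the integrals above are absolutely convergent.
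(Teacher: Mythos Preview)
Your proof is correct and follows essentially the same approach as the paper's: expand the kinetic energy of $\theta_\delta\rho$ via the product rule, integrate by parts the cross term to produce the combination $\frac{|\nabla\theta_\delta|^2}{\theta_\delta}-2\Delta\theta_\delta$, use the disjointness of supports to split off $\Tcal(\nu)$, and bound the potential loss by $Z/\delta$ since $1-\theta_\delta$ is supported outside $A_\delta$. Your version is in fact slightly sharper (you obtain $K/4$ for the kinetic remainder where the paper writes $K$) and more careful about justifying that $\sqrt{\theta_\delta\rho}\in H^1$ and that the integration by parts is legitimate, points the paper leaves implicit.
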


\begin{proof} We compute
$$\frac{|\nabla(\theta_\delta\rho)|^2}{\theta_\delta\rho}=\frac{|\theta_\delta\nabla\rho+\rho\nabla\theta_\delta|^2}{\theta_\delta\rho}=\theta_\delta\frac{|\nabla\rho|^2}{\rho}+\rho\frac{|\nabla\theta_\delta|^2}{\theta_\delta}+2\nabla \rho \cdot \nabla \theta_\delta.$$
Concerning the second and third terms we remark that
$$\norm{\int\rho\frac{|\nabla\theta_\delta|^2}{\theta_\delta}+2\nabla\rho\cdot\nabla\theta_\delta\,dx}
=\norm{\int \left(\frac{|\nabla \theta_\delta|^2}{\theta_\delta}-2 \Delta \theta_\delta \right) \rho\, dx} \leq K. $$
Summarizing, we have obtained
$$\int\frac{|\nabla(\theta_\delta\rho)|^2}{4\,\theta_\delta\rho}\,dx\le T(\rho)+K.$$
Since the support of $\nu$ is away from that of $\theta_\delta \rho$ we have that
$$\eps^2 T (\theta_\delta\rho +\nu)=\eps^2\Tcal(\theta_\delta\rho)+ \eps^2 \Tcal(\nu) \leq \eps^2 T(\rho)+ (\eps^2+\eps) K. $$
Similarly,
$$\int V(x)\theta_\delta\rho\,dx\ge\int V(x)\rho\,dx-\int_{A_\delta^c}V(x)\rho\,dx\ge\int V(x)\rho\,dx-\frac{Z}{\delta}\; , $$
and 
$$\int V(x)\nu\,dx\ge 0.$$
Summing up the last inequalities give the desired estimate. 
\end{proof}

\begin{exam}\label{exthetanu}
We give for completeness an example of functions $\theta_\delta$ and $\nu$ satisfying the assumptions of Lemma \ref{localize} above.
We define the real functions 
$$f(t)= 
\begin{cases}
e^{-1/t} & \mbox{if} \ t>0,\\
0 & \mbox{otherwise.}
\end{cases}
\qquad\text{and}\qquad g(t):= \frac{f(t)}{f(t)+f(1-t)}\,.
$$
When $\delta\ll\|X_i-X_j\|$ for $i\ne j$ one may consider
$$\theta_\delta (x)=\sum_{i=1}^M g\left(2-\left(\frac{\|x-X_i\|}{\delta}\right)^2\right).$$
Concerning $\nu$ it is enough to consider any positive function $h$ with smooth $\sqrt{h}$ supported away from the points $X_i$ and set $\nu = \beta h$ with $\beta$ small enough so that $\theta_\delta\rho+\nu\in\Pcal$: in that case the constant $K$ in \eqref{nueps} does not depend on $\eps$.
\end{exam}
	
\begin{lemm}\label{G0ofdelta} We have
$$G^{0+}(\delta_{X_k})\leq -\frac{Z_k^2}{4}.$$
\end{lemm}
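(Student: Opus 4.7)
The plan is to construct an explicit recovery sequence by translating and rescaling the hydrogen atom ground state to match the nucleus at $X_k$ with charge $Z_k$, and then to observe that the potentials coming from the other nuclei only make $G^0_\eps$ more negative, so the required bound follows immediately with no delicate estimation. Let $\bar\rho$ be a minimizer of $T(\rho) - U_0(\rho)$ on $\PP$ (so that $T(\bar\rho) - U_0(\bar\rho) = -1/4$), exactly as in the hydrogen case of the previous subsection, and set
\[
\rho_\eps(x) := \bigl(Z_k/\eps\bigr)^3 \bar\rho\bigl((Z_k/\eps)(x - X_k)\bigr),
\]
i.e.\ $\rho_\eps = \tau_{X_k}^\#(\bar\rho^{Z_k/\eps})$. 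Then $\rho_\eps \in \PP$ and $\rho_\eps \weak \delta_{X_k}$ as $\eps \to 0$ by a trivial change of variable in any continuous bounded test function.

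Using the scaling identities $T(\bar\rho^s) = s^2 T(\bar\rho)$ and $U_0(\bar\rho^s) = s\,U_0(\bar\rho)$, together with the hydrogen relations $T(\bar\rho) = 1/4$ and $U_0(\bar\rho) = 1/2$ (which come from $T(\bar\rho) - U_0(\bar\rho) = -1/4$ and the stationarity of $s \mapsto s^2 T(\bar\rho) - s\, U_0(\bar\rho)$ at $s = 1$), I would compute
\[
\eps^2\,T(\rho_\eps) - \eps\, Z_k\, U_{X_k}(\rho_\eps) \;=\; Z_k^2 \bigl(T(\bar\rho) - U_0(\bar\rho)\bigr) \;=\; -\frac{Z_k^2}{4}.
\]
Since $Z_j \ge 0$ and $U_{X_j}(\rho_\eps) \ge 0$ for every $j$, this yields
\[
G^0_\eps(\rho_\eps) \;=\; -\frac{Z_k^2}{4} \;-\; \eps \sum_{j \neq k} Z_j\, U_{X_j}(\rho_\eps) \;\le\; -\frac{Z_k^2}{4}
\]
for every $\eps > 0$, and the definition of the $\Gamma$-limsup then immediately gives $G^{0+}(\delta_{X_k}) \le -Z_k^2/4$. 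There is essentially no obstacle: the positive charges of the other nuclei work in our favor, so no fine analysis of the tails of $\rho_\eps$ near the points $X_j$ (for $j \neq k$) is required; the whole matter reduces to the single-nucleus scaling computation plus a sign observation.
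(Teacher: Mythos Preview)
Your proof is correct and follows essentially the same approach as the paper: construct a recovery sequence by rescaling a fixed probability toward the nucleus $X_k$, use the scaling identities for $T$ and $U_0$, and exploit that the contributions from the other nuclei have the right sign. The only cosmetic difference is that you plug in the explicit hydrogen minimizer $\bar\rho$ (with the extra $Z_k$ factor in the rescaling to hit $-Z_k^2/4$ exactly for every $\eps$), whereas the paper rescales a generic $\eta$ by $1/\eps$ and then takes the infimum over $\eta$ via \eqref{usualtrick} at the end.
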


\begin{proof}
It is enough to prove the inequality for $X_1$ and $Z_1$ and without loss of generality we may assume $X_1=0$.
Consider a generic probability $\eta$ and define
$$\rho_\eps(x)=\eta^{1/\eps}(x)=\frac{1}{\eps^3}\eta(x/\eps).$$
We have that $\rho_\eps$ weakly* converges to $\delta_0$, so that
$$G^{0+}(\delta_0)\le\limsup_{\eps\to0}G_\eps^0(\rho_\eps)=T(\eta)-\int \frac{Z_1\eta}{|x|}\,dx.$$
Taking the infimum with respect to $\eta$ and using \eqref{usualtrick} gives what required.
\end{proof}

\begin{theo}\label{liebG}
The limit functional $G^0$ exists and is given by the formula
\be\label{eg0}
G^0(\rho)=-\frac14\sum_{1\le k\le M}Z_k^2\rho(\{X_k\}).
\ee
\end{theo}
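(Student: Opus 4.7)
The strategy is to establish separately the two $\Gamma$-convergence bounds $(G^0)^+(\rho)\le -\tfrac14\sum_k Z_k^2\rho(\{X_k\})$ and $(G^0)^-(\rho)\ge -\tfrac14\sum_k Z_k^2\rho(\{X_k\})$; combined with the general inequality $(G^0)^-\le(G^0)^+$, these force equality, identify $G^0$ independently of any subsequence, and give the formula.

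\textbf{Upper bound via convexity.} Write $\rho\in\PP$ as $\rho=\sum_k\alpha_k\delta_{X_k}+\tilde\rho$ with $\alpha_k=\rho(\{X_k\})$ and $\tilde\rho$ charging no $X_k$; when $\tilde\rho\neq 0$ this is a convex combination of the probabilities $\delta_{X_k}$ and $\tilde\rho/\|\tilde\rho\|$ with weights $\alpha_k$ and $\|\tilde\rho\|$. Since $G^0_\eps=\eps^2 T-\eps U$ is convex ($T$ convex, $U$ linear), its $\Gamma$-limsup $(G^0)^+$ is convex on $\PP^-$, so
\[(G^0)^+(\rho)\le\sum_k\alpha_k (G^0)^+(\delta_{X_k})+\|\tilde\rho\|\,(G^0)^+(\tilde\rho/\|\tilde\rho\|).\]
Lemma \ref{G0ofdelta} bounds $(G^0)^+(\delta_{X_k})\le -Z_k^2/4$, and since $bC\ge 0$ one has $(G^0)^+\le G^+\le 0$ by Proposition \ref{l2}, which disposes of the diffuse part. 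The degenerate cases ($\tilde\rho=0$ or some $\alpha_k=0$) are trivial. For $\rho\in\PP^-$ with $\|\rho\|<1$ one approximates by $\hat\rho_n=\rho+(1-\|\rho\|)\delta_{y_n}\in\PP$ with $|y_n|\to\infty$: then $\hat\rho_n\weak\rho$ and $\hat\rho_n(\{X_k\})=\alpha_k$ for $n$ large, so weak* lower semicontinuity of $(G^0)^+$ transfers the bound to $\rho$.

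\textbf{Lower bound via IMS localization.} Given $\rho_\eps\in\PP$ with $\rho_\eps\weak\rho$, set $u_\eps=\sqrt{\rho_\eps}$. Fix $\delta$ smaller than half the minimum pairwise distance of the $X_k$'s and choose smooth cut-offs $\chi_0,\ldots,\chi_M$ with $\sum_k\chi_k^2\equiv 1$ on $\R^3$, where for $k\ge 1$ the function $\chi_k$ is supported in $B(X_k,2\delta)$ and equals $1$ on $B(X_k,\delta)$. The IMS identity
\[\int|\nabla u_\eps|^2=\sum_{k=0}^M\int|\nabla(\chi_k u_\eps)|^2-\int\Bigl(\sum_k|\nabla\chi_k|^2\Bigr)u_\eps^2\]
together with $\int u_\eps^2=1$ and $\int|\nabla(\chi_0 u_\eps)|^2\ge 0$ yields $\eps^2 T(\rho_\eps)\ge\sum_{k=1}^M\eps^2\int|\nabla(\chi_k u_\eps)|^2-\eps^2 C_\delta$. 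Since $|x-X_k|\ge\delta$ outside the support of $\chi_k$, the potential splits as $\sum_k Z_k\int u_\eps^2/|x-X_k|\le \sum_k Z_k\int(\chi_k u_\eps)^2/|x-X_k|+\mathrm{const}/\delta$. Thus
\[G^0_\eps(\rho_\eps)\ge\sum_{k=1}^M\left[\eps^2\!\int|\nabla(\chi_k u_\eps)|^2-\eps Z_k\!\int\frac{(\chi_k u_\eps)^2}{|x-X_k|}\,dx\right]-\eps^2 C_\delta-\eps\,\mathrm{const}/\delta.\]
For each $k$, the rescaling $\psi_k(y)=\eps^{3/2}(\chi_k u_\eps)(\eps y+X_k)$ transforms the bracket into $\int|\nabla\psi_k|^2-Z_k\int\psi_k^2/|y|\,dy$ while preserving $\int\psi_k^2=\int\chi_k^2\rho_\eps$; the sharp hydrogen atom inequality (first eigenvalue $-Z_k^2/4$ of $-\Delta-Z_k/|y|$, used above to determine the hydrogen ground state) then bounds the bracket below by $-\tfrac{Z_k^2}{4}\int\chi_k^2\rho_\eps$. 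Passing $\liminf_{\eps\to 0}$ kills the error terms, and weak* convergence with $\chi_k^2\in C_c(\R^3)$ gives $\int\chi_k^2\rho_\eps\to\int\chi_k^2\,d\rho$, hence $(G^0)^-(\rho)\ge-\tfrac14\sum_k Z_k^2\int\chi_k^2\,d\rho$. Finally, since $0\le\chi_k^2\le\mathbf{1}_{B(X_k,2\delta)}$ and $\chi_k(X_k)=1$, letting $\delta\to 0$ drives $\int\chi_k^2\,d\rho$ down to $\rho(\{X_k\})$.

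\textbf{Main obstacle.} The delicate point is the sharp constant $1/4$: the crude Sobolev--Hardy bound in Proposition \ref{l2} gives only $\kappa M/4$. The IMS partition of unity is what separates the nuclei and lets one reduce, via a scale-invariant rescaling, each localized piece to a genuine hydrogen atom problem whose optimal constant is known; the ordering of the limits (first $\eps\to 0$, then $\delta\to 0$) is essential to absorb the remainders $\eps^2 C_\delta$ and $\eps/\delta$.
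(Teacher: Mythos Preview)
Your proof is correct. The upper bound is identical to the paper's: both invoke convexity of $(G^0)^+$, Lemma~\ref{G0ofdelta}, and Proposition~\ref{l2}; your explicit treatment of $\rho\in\PP^-\setminus\PP$ via approximation by $\rho+(1-\|\rho\|)\delta_{y_n}$ is a minor elaboration the paper leaves implicit. For the lower bound, however, you take a different and somewhat cleaner route. Instead of the paper's Localization Lemma~\ref{localize}, which cuts off $\rho_\eps$ by $\theta_\delta$ and then restores unit mass by appending an auxiliary measure $\nu_\eps$ supported far from the nuclei, you apply the IMS partition-of-unity identity directly to $u_\eps=\sqrt{\rho_\eps}$. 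This avoids the compensating mass $\nu_\eps$, the check that $\theta_\delta\rho_\eps+\nu_\eps\in\PP$, and the explicit error estimate of Lemma~\ref{localize}; the localization error becomes simply $\eps^2\int\big(\sum_k|\nabla\chi_k|^2\big)u_\eps^2=O(\eps^2)$. Both arguments then reduce each localized piece to the sharp hydrogen inequality~\eqref{usualtrick} and take $\eps\to0$ before $\delta\to0$. Your approach is closer to standard spectral-theoretic localization and marginally shorter; the paper's approach has the compensating advantage that its localization lemma is later reused (as Lemma~\ref{localiz}) in the interacting case, where one must also control the transport term $C$ and where an IMS splitting of $\sqrt\rho$ does not directly help.
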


\begin{proof} We prove that 
\be\label{eg1}
-\frac14\sum_{1\le k\le M}Z_k^2\rho(\{X_k\}) \leq G^{0-}(\rho)\leq G^{0+}(\rho)\leq -\frac14\sum_{1\le k\le M}Z_k^2\rho(\{X_k\}).
\ee
The last inequality in \eqref{eg1} follows by the convexity of the functional $G^{0+}$ and Lemma \ref{G0ofdelta}. Indeed, we can write for every probability $\rho$
$$\rho=\sum_{1\le k\le M}\alpha_k\delta_{X_k}+\alpha_0\frac{\rhop}{\alpha_0}$$
where
$$\alpha_k=\rho(\{X_k\}),\qquad\rhop=\rho\lfloor\big\{\R^3\setminus\cup_{1\le k\le M}\{X_k\}\big\},\qquad\alpha_0=\rhop(\R^3).$$
Since $\alpha_0+\sum_{1\le k\le M}\alpha_k=1$ the convexity of $G^+$ gives
$$G^{0+}(\rho)\le\sum_{1\le k\le M}\alpha_k G^{0+}(\delta_{X_k})+\alpha_0 G^{0+}(\rhop/\alpha_0).$$
By Lemma \ref{G0ofdelta} we have
$$G^{0+}(\delta_{X_k})\leq-\frac14 Z_k^2$$
and, since $G^{0+}(\rho)=0$ whenever $\rho$ does not charge any of the points $X_k$ (see Proposition \ref{l2}), we have $G^{0+}(\rhop/\alpha_0)=0$ so that the desired inequality follows.

In order to prove the first inequality in \eqref{eg1} we have to show that for every $\rho_\eps$ weakly* converging to $\rho$ we have
$$-\frac14\sum_{1\le k\le M}Z_k^2\rho(\{X_k\})\le\liminf_{\eps\to0}G^0_\eps(\rho_\eps).$$
We apply Lemma \ref{localize} to $\rho_\eps$ with $\delta$ small enough and fixed, so that we can replace $\rho_\eps$ by 
$$\theta_\delta \rho_\eps + \nu_\eps = \sum_{k=1}^M \theta_\delta \rho_\eps \lfloor B(X_k, 2 \delta) + \nu_\eps = \sum_{k=1}^M \rho_\eps^k + \nu_\eps$$
where we defined $\rho_\eps^k:=\theta_\delta \rho_\eps \lfloor B(X_k, 2 \delta)$ and $\nu_\eps$ is chosen as in Example \ref{exthetanu} with a fixed function $h$ so that the constant $K$ does not depend on $\eps$. We then have
$$ \liminf_{\eps \to 0} G^0_\eps (\rho_\eps) \geq \liminf_{\eps \to 0} \left(G^0_\eps (\sum_{k=1}^M \rho_\eps ^k + \nu_\eps) - \eps K_\delta \right)=\liminf_{\eps \to 0} \,G^0_\eps (\sum_{k=1}^M \rho_\eps ^k + \nu_\eps),$$
where we can take $K_\delta = 2K+\frac{Z}{\delta}$ for $\eps\le1$ and 
$$ G^0_\eps (\sum_{k=1}^M \rho_\eps ^k + \nu_\eps)= \sum_{k=1}^M \left(\eps^2 \Tcal (\rho_\eps^k) -\eps Z_k U_{X_k} (\rho_\eps^k)\right) + \eps^2 \Tcal (\nu_\eps) - \eps U(\nu_\eps) .$$
Concerning the first $M$ terms we infer from \eqref{usualtrick} that
$$\eps^2 \Tcal (\rho_\eps^k) -\eps Z_k U_{X_k} (\rho_\eps^k)=\left(\eps^2 T \left(\frac{\rho_\eps^k}{\int d \rho^k_\eps}\right) -\eps Z_k U_{X_k} \left(\frac{\rho_\eps^k}{\int d \rho^k_\eps}\right)\right)\int d \rho_\eps^k \geq - \frac{Z_k^2}{4} \int d \rho_\eps^k.$$
As $\eps\to0$ we have
$$\lim_{\eps\to0}\int d\rho_\eps^k=\int_{B(X_k,2\delta)}\theta_\delta\rho\le\rho\big(B(X_k,2\delta)\big).$$ 
The last term $\eps^2 \Tcal (\nu_\eps) - \eps U(\nu_\eps)$ vanishes as $\eps\to0$. Summing up we obtained
$$\liminf_{\eps\to0}G^0_\eps\Big(\sum_{k=1}^M \rho_\eps^k+\nu_\eps\Big)\ge-\sum_{k=1}^M\frac{Z_k^2}{4}\rho\big(B(X_k,2\delta)\big).$$
Letting now $\delta\to0$ gives the desired inequality.
\end{proof}

\begin{rema}
Since the correlation term $C$ is non-negative, we obtain from the preceding that
\[
\forall \rho \in \Pcal, \qquad G^-(\rho) \geq G^{0-}(\rho) =-\frac{1}{4}\sum_{1\le k\le M}Z_k^2\rho(\{X_k\}).
\]
which is a more precise lower estimate of $G^-$ than that obtained in Proposition \ref{l2}.
\end{rema}
	
\begin{rema}\label{G0min}
The explicit form of $G^0$ given in \eqref{eg0} allows to directly deduce that
$$\argmin G^0=\left\{\rho=\sum_{i\in I_{max}}\alpha_i\delta_{X_i}\ :\ 0\le\alpha_i\le1,\ \sum\alpha_i=1\right\},$$
where $I_{max}$ denotes the set of indices $i$ such that $Z_i=\max\{Z_1,\dots,Z_M\}$. In other words, the optimal configurations for the limit functional $G^0$ concentrate on the points $X_i$ having the highest nuclei charges $Z_i$. We shall see in Remark \ref{Gmin} below that the situation is somewhat more intricate in the interacting case $b>0$.
\end{rema}

\section{The interacting case for $N=2$}\label{two-interact}

The complete characterization of the functional $G$ defined as the $\Gamma$-limit of the functionals $G_\eps$ as $\eps\to0$, in the general case $M\ge1$ and $N\ge1$ seems a very difficult issue, mainly due to the general form of the localization Lemma \ref{localiz} below, that is at the moment unavailable. We then limit ourselves in this section to consider the case $N=2$ that we can handle completely.

For every $\rho\in\PP^-$ we denote by $\rho^\#$ the atomic measure
$$\rho^\#=\sum_{i=1}^M \rho(\{X_i\})\delta_{X_i}$$
where $X_i$ are the (fixed) positions of the nuclei ($i=1,\dots,M$). We also denote by $\rhop$ the measure
$$\rhop=\rho-\rho^\#$$
that does not charge any of the points $X_i$. In other words, $\rho^\#$ and $\rhop$ are respectively the restrictions of $\rho$ to the sets $\{X_1,\dots,X_M\}$ and to its complement.

The following Lemma extends the localization argument of Lemma \ref{localize} in presence of the correlation term $C$.

\begin{lemm}\label{localiz}(Localization)
Let $N=2$, $\rho \in\PP$, and $\delta>0$, and let 
$\theta_\delta$ and $\nu$ be as in Lemma \ref{localize}. Assume further that $\nu=\nu ^1+ \nu ^2$ with $\|\nu ^1\|=\| \nu ^2\|$ and $\spt(\nu^1)\cap\spt( \nu^2)= \emptyset$.
Then we have
$$G_\eps(\theta_\delta\rho+\nu)\le G_\eps(\rho)+\eps\left( (1+\eps)K+\frac{Z+2b}{\delta} +\frac{b}{\dist(\spt(\nu^1),\spt(\nu^2))}\right).$$
\end{lemm}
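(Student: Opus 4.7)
The strategy is to reduce to Lemma \ref{localize} by bounding the extra contribution of the correlation term $\eps b C$. Since Lemma \ref{localize} already handles $\eps^2 T - \eps U$, it will suffice to prove
\[
C(\theta_\delta\rho+\nu)\ \le\ C(\rho) + \frac{2}{\delta} + \frac{1}{d}, \qquad d:=\dist(\spt(\nu^1),\spt(\nu^2)),
\]
after which we multiply by $\eps b$ and sum with the estimate of Lemma \ref{localize} to obtain the claim. We may assume $C(\rho)<+\infty$ (otherwise the inequality is trivial) and $\alpha:=\|\nu^1\|=\|\nu^2\|>0$ (if $\alpha=0$ then $\nu=0$ and the statement is just Lemma \ref{localize}).

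To produce an admissible plan for $C(\theta_\delta\rho+\nu)$, I fix a symmetric (near-)optimal plan $P$ for $C(\rho)$ (symmetrization costs nothing since $c$ is symmetric), write $\mu:=\theta_\delta\rho$, and disintegrate $P(dx,dy)=\rho(dx)P_x(dy)$. Setting $g(x):=\int\theta_\delta(y)P_x(dy)\in[0,1]$, both marginals of $\theta_\delta(x)\theta_\delta(y)\,P(dx,dy)$ equal $\theta_\delta g\rho = \mu - r$ where $r:=\theta_\delta(1-g)\rho$. I then propose the candidate plan on $\R^{6}$
\[
\tilde P\ :=\ \theta_\delta(x)\theta_\delta(y)\,P(dx,dy)\ +\ r\otimes\frac{\nu}{2\alpha}\ +\ \frac{\nu}{2\alpha}\otimes r\ +\ \left(1-\frac{\|r\|}{2\alpha}\right)\left(\nu^1\otimes\frac{\nu^2}{\alpha}+\nu^2\otimes\frac{\nu^1}{\alpha}\right).
\]
Since $\theta_\delta\le 1$ and $\|(1-\theta_\delta)\rho\|=2\alpha$, the bound $\int \theta_\delta(x)(1-\theta_\delta(y))\,dP\le 2\alpha$ gives $\|r\|\le 2\alpha$, so every block in $\tilde P$ is non-negative. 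A direct bookkeeping of marginals shows that the first (resp.\ second) projection of $\tilde P$ equals $\mu+\nu=\theta_\delta\rho+\nu$, so $\tilde P$ is admissible.

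For the cost, the principal block is bounded by $C(\rho)$ since $\theta_\delta\otimes\theta_\delta\le 1$. For the four remaining blocks the support separation in \eqref{nueps} is decisive: $\spt(r)\subset \bigcup_i\overline{B(X_i,2\delta)}$ while $\spt(\nu)$ lies at distance at least $3\delta$ from every $X_i$, so $|x-y|\ge\delta$ on $\spt(r)\times\spt(\nu)$; likewise $|x-y|\ge d$ on $\spt(\nu^1)\times\spt(\nu^2)$. Each $r\otimes\nu/(2\alpha)$-block thus contributes at most $\|r\|/\delta\le 2\alpha/\delta$, and the two $\nu^i\otimes\nu^j/\alpha$-blocks together contribute at most $(2\alpha-\|r\|)/d$. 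Summing and using $\alpha\le 1/2$ and $\|r\|\le 2\alpha$ yields the correlation estimate above. The main obstacle I expect is precisely the design of this correction to $\theta_\delta\otimes\theta_\delta\cdot P$: it must simultaneously restore \emph{both} marginals to $\mu+\nu$, remain non-negative, and route every bit of transport between well-separated pieces of the support so that $1/|x-y|$ is controlled by $1/\delta$ or $1/d$. That such a coupling exists is exactly what the hypothesis $\nu=\nu^1+\nu^2$ with disjoint supports and equal masses makes possible, which is the reason this assumption is built into the lemma.
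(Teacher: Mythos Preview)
Your argument is correct and follows essentially the same route as the paper: reduce to the correlation estimate $C(\theta_\delta\rho+\nu)\le C(\rho)+\tfrac{2}{\delta}+\tfrac{1}{d}$, then build an admissible plan for $\theta_\delta\rho+\nu$ as a weighted piece of an optimal symmetric plan $P$ for $\rho$ plus tensor-product corrections coupling the residual to $\nu$ and $\nu^1$ to $\nu^2$. The only cosmetic difference is that the paper takes $\min\{\theta_\delta(x),\theta_\delta(y)\}\,P$ (restricted near the nuclei) as the principal block, whereas you take $\theta_\delta(x)\theta_\delta(y)\,P$; both weights are $\le 1$ and $\le\theta_\delta(x)$, so the marginal bookkeeping, the non-negativity check $\|r\|\le\|\nu\|$, and the cost bounds go through identically.
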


\begin{proof}
By Lemma \ref{localize} we only need the following estimate of the transport term $C$: 
$$C(\theta_\delta\rho+\nu)\le C(\rho)+\frac{2}{\delta}+ \frac{1}{\dist(\spt(\nu^1),\spt( \nu^2))}.$$
It is convenient to introduce the set $R=\R^3\setminus A_\delta$, being $A_\delta$ defined in \eqref{thetad}).

We denote by $P$ an optimal transport plan for $\rho$, which is also symmetric with respect to a permutation of the variables and we define a new transport plan $\tilde P$ as below.
\[\begin{split}
&\tilde P ^1=\tilde P_{|A_\delta\times A_\delta}=\min\{\theta_\delta(x),\theta_\delta(y)\}P_{|A_\delta\times A_\delta}\;,\\
&\tilde P^2=\tilde P_{|A_\delta\times R}=(\theta_\delta\rho-\pi^\#_1\tilde P^1)\otimes
\frac{\nu}{\|\nu\|}\;,\\
&\tilde P^3=\tilde P_{|R\times A_\delta}=\frac{\nu}{\|\nu\|}\otimes(\theta_\delta\rho_\eps-\pi^\#_2\tilde P^1)\;,\\
&\tilde P^4=
2\,\left(1- \frac{\|\theta_\delta \rho_\eps - \pi_2^\#\tilde P^1\|}{\|\nu\|}\right)
\left( \nu^1 \otimes \nu^2 + \nu^2 \otimes \nu^1 \right)
\end{split}\]
The fact that $\tilde P^2 \geq 0$ (and similarly that $\tilde P^3 \geq 0$) follows from 
$\tilde P^1\le\theta_\delta (x) P_{|A_\delta \times A_\delta}$. To check that $\tilde P^4 \geq 0$ it is enough to compute
\begin{align*}
\|\nu\|- \|\theta_\delta \rho - \pi_2^\#\tilde P^1\|&= 1-\|\theta_\delta \rho\|-(\|\theta_\delta\rho\|-\|\pi_2^\#\tilde P^1 \|)\\
&=1+\|\pi_2^\#\tilde P^1 \| - \|\theta_\delta \rho\|-\|\theta_\delta \rho\|\\
&=\int\left[ 1+ \min \{ \theta_\delta (x), \theta_\delta(y)\} - \theta_ \delta (x)- \theta_\delta(y)\right] d P(x,y) \geq 0
\end{align*}
where the last inequality follows since the integrand is non-negative. Also note that
$$ \nu-\pi^\#_1\tilde P^3 =\left(1- \frac{\|\theta_\delta \rho_\eps - \pi_2^\#\tilde P^1\|}{\|\nu\|}\right) \nu $$
so that $\pi_1^\# \tilde P_4 = \nu-\pi^\#_1\tilde P^3$.
%

In order to show that the two marginals of $\tilde P$ coincide with $\theta_\delta\rho+\nu$, since $\tilde P$ is symmetric, it is enough to check the first marginal. Since $R$ is the complement of $A_\delta$ we compute the restriction of the marginal to these two sets. Then
\[\begin{split}
&\mbox{on $A_\delta$ we have}\quad\pi^\#_1\tilde P=\pi^\#_1\tilde P^1+\pi^\#_1\tilde P^2
=\pi^\#_1\tilde P^1+\theta_\delta\rho-\pi^\#_1\tilde P^1
=\theta_\delta\rho\;,\\
&\mbox{on $R$ we have}\quad\pi^\#_1\tilde P=\pi^\#_1\tilde P^3+\pi^\#_1\tilde P^4
=\pi^\#_1\tilde P^3+\nu-\pi^\#_1\tilde P^3=\nu\;.
\end{split}\]
Since the quantity $\int_{(\R^3)^2} c\,dP$ to be minimized in $C$ is linear with respect to $P$ it is enough to estimate it for each of the components of $\tilde P$ above and using the facts that
$$\begin{cases}
\dist\big(\spt(\nu^1),\spt(\nu^2)\big)>0,\\
0\le\min\{\theta_\delta(x),\theta_\delta(y)\}\le1,\\
\dist\big(\spt\nu,\{X_1,\dots,X_M\}\big)\ge3\delta,
\end{cases}$$
we obtain 
$$\int_{(\R^3)^2} c(x,y)\,d\tilde P (x,y) \le \int_{(\R^3)^2} c(x,y)\,dP (x,y)+\frac{1}{\delta}+\frac{1}{\delta}+ \frac{1}{\dist(\spt(\nu^1),\spt( \nu^2))} $$
which is the desired inequality.
\end{proof}

We are now in position to prove both the existence of the $\Gamma$-limit of the functionals $G_\eps$ and a property of it that will be very useful in the following to obtain an explicit representation formula.

\begin{theo}\label{rhosharp}
For every $\rho\in\PP^-$ the $\Gamma$-limit $G$ of the functionals $G_\eps$ exists and we have
$$G(\rho)=G(\rho^\#).$$
\end{theo}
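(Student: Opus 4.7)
\medskip

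\noindent\textbf{Plan.} The strategy is to prove separately the two inequalities $G^-(\rho) \geq G^-(\rho^\#)$ and $G^+(\rho) \leq G^+(\rho^\#)$, and then to conclude using $G = G^+ = G^-$ for the $\Gamma$-converging subsequence. The existence of $G$ is inherited from the compactness of $\Gamma$-convergence on the metrizable compact space $\PP^-$ stated before the theorem, so only the equality $G(\rho)=G(\rho^\#)$ requires work.

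For the lower bound, let $\rho_\eps \in \PP$ with $\rho_\eps \weak \rho$ realize $\liminf_\eps G_\eps(\rho_\eps) = G^-(\rho)$. Fix $\delta > 0$ and construct a perturbation $\nu_\eps = \nu^1_\eps + \nu^2_\eps$ as follows: pick once and for all two smooth probability densities $\zeta^1, \zeta^2$ with disjoint compact supports and two linearly independent unit vectors $e_1, e_2$, then set
\[
\nu^i_\eps := \tfrac{1-\|\theta_\delta \rho_\eps\|}{2}\, \tau_{n(\eps)\, e_i}^\# \zeta^i, \qquad i = 1, 2,
\]
with $n(\eps) \to +\infty$. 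This ensures $\theta_\delta \rho_\eps + \nu_\eps \in \PP$, $\|\nu^1_\eps\| = \|\nu^2_\eps\|$, disjoint supports at mutual distance of order $n(\eps)|e_1-e_2|$, and eventually at distance $\geq 3\delta$ from each $X_k$; translation invariance and $1$-homogeneity of $T$ furthermore give $\eps\, \Tcal(\nu_\eps) \to 0$. Lemma \ref{localiz} then yields
\[
G_\eps(\theta_\delta \rho_\eps + \nu_\eps) \leq G_\eps(\rho_\eps) + \eps \left[(1+\eps) K_\delta + \tfrac{Z+2b}{\delta} + \tfrac{b}{n(\eps)\, |e_1-e_2| - c}\right],
\]
with $K_\delta, c$ independent of $\eps$, so the correction is $O_\delta(\eps)$. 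Since $\nu_\eps \weak 0$ (its support escapes at bounded total mass) and $\theta_\delta \rho_\eps \weak \theta_\delta \rho$ (multiplication by the bounded continuous function $\theta_\delta$ is weakly* continuous), we have $\theta_\delta \rho_\eps + \nu_\eps \weak \theta_\delta \rho$, hence by definition of $G^-$,
\[
G^-(\theta_\delta \rho) \leq \liminf_\eps G_\eps(\theta_\delta \rho_\eps + \nu_\eps) \leq G^-(\rho).
\]
Letting $\delta \to 0$, $\theta_\delta \rho \weak \rho^\#$ by dominated convergence (as $\theta_\delta$ converges pointwise to the indicator of $\{X_1,\dots,X_M\}$), and the weak* lower semicontinuity of $G^-$ gives $G^-(\rho^\#) \leq G^-(\rho)$.

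For the upper bound, the convexity of $G^+$ combined with its $1$-homogeneity on $\PP^-$ yields subadditivity: $G^+(\mu+\nu) \leq G^+(\mu) + G^+(\nu)$ whenever $\mu + \nu \in \PP^-$. The $1$-homogeneity is inherited from the $1$-homogeneous extension $\mathcal{G}_\eps$ of $G_\eps$ to $\PP^-$: any sequence of subprobabilities weakly* converging to a given limit may be completed to a sequence in $\PP$ by appending a probability mass escaping to infinity, at negligible extra energy (via the convexity/subadditivity of $\mathcal{G}_\eps$), so the $\Gamma$-limsup computed with sequences in $\PP^-$ coincides with $G^+$ and is clearly $1$-homogeneous by rescaling within $\PP^-$. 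Applied to the decomposition $\rho = \rho^\# + \rhop$,
\[
G^+(\rho) \leq G^+(\rho^\#) + G^+(\rhop).
\]
Since $\rhop/\|\rhop\|$ is a probability not charging any $X_k$, Proposition \ref{l2} gives $G^+(\rhop/\|\rhop\|) \leq 0$, and $1$-homogeneity yields $G^+(\rhop) \leq 0$. Hence $G^+(\rho) \leq G^+(\rho^\#)$.

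The principal technical hurdle is the lower bound step: the perturbation $\nu_\eps$ prescribed by Lemma \ref{localiz} must simultaneously normalize $\theta_\delta \rho_\eps + \nu_\eps$ to a probability, vanish weakly* with $\eps\, \Tcal(\nu_\eps) \to 0$, split into two equal-mass pieces whose mutual distance stays bounded below (so that the $b/\dist$ term in the lemma remains controlled), and stay at distance $\geq 3\delta$ from the nuclei. The use of two distinct translation directions $e_1, e_2$ together with the parameter $n(\eps)\to+\infty$, tuned slowly enough with respect to $\delta$, reconciles all four constraints simultaneously.
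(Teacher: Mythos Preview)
Your lower bound argument $G^-(\rho^\#)\le G^-(\rho)$ is correct and follows the same route as the paper: localize a sequence realizing $G^-(\rho)$ via Lemma \ref{localiz} with an escaping two-piece perturbation $\nu_\eps$, obtain $G^-(\theta_\delta\rho)\le G^-(\rho)$, and let $\delta\to0$.

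Your upper bound argument, however, has a genuine gap. You claim that $G^+$ is $1$-homogeneous on $\PP^-$, deduced from the $1$-homogeneous extension $\mathcal{G}_\eps$. This is false: by Theorem \ref{explform} (or Theorem \ref{P2M1}) one has, for $N=2$ and $M=1$ with $X_1=0$,
\[
G(\delta_0)=g_b(Z,1)>-\tfrac{Z^2}{4}=2g_b\big(Z,\tfrac12\big)=2\,G\big(\tfrac12\delta_0\big),
\]
so $G^+=G$ is not $1$-homogeneous. The same example shows that subadditivity itself fails: $G^+(\tfrac12\delta_0+\tfrac12\delta_0)>G^+(\tfrac12\delta_0)+G^+(\tfrac12\delta_0)$. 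The flaw in your reasoning is step (c): the rescaling $\sigma\mapsto\sigma/t$ is only defined on $\{\|\sigma\|\le t\}\subset\PP^-$, whereas the sequences $\sigma_\eps\weak t\rho$ realizing $\tilde G^+(t\rho)$ may well have $\|\sigma_\eps\|>t$ (indeed, the optimal ones do in the example above). Convexity together with $G^+(0)=0$ gives only the one-sided inequality $G^+(t\rho)\le tG^+(\rho)$, which does not yield subadditivity.

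The paper establishes $G^-(\rho^\#+\rhop)\le G^+(\rho^\#)$ by a second application of the localization Lemma \ref{localiz}, this time starting from a recovery sequence $\rho_\eps\weak\rho^\#$ for $G^+(\rho^\#)$ and choosing $\nu_\eps=a_\eps\rhop$ (for $\rhop$ of a special splitting form, then approximating), so that $\theta_\delta\rho_\eps+\nu_\eps\weak\theta_\delta\rho^\#+\rhop$ with controlled energy increase. You should replace your subadditivity step by this construction.
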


\begin{proof}
By the compactness of the $\Gamma$-convergence, the $\Gamma$-limit $G$ exists, at least for a subsequence $\eps_n\to0$; as stated in Section \ref{sresca}, since later we will characterize this $\Gamma$-limit explicitly, we may assume it does not depend on the subsequence, so that the entire family $G_\eps$ $\Gamma$ converges and $G^- = G^+ = G$. Let $\rho\in\PP^-$. Writing $\rho=\rho^\#+\rhop$, it is then enough to show the inequalities below:
\be\label{gamma+-}
\begin{split}
&G^-(\rho^\#+\rhop)\le G^+(\rho^\#)\;,\\
&G^+(\rho^\#+\rhop)\ge G^-(\rho^\#)\;.
\end{split}\ee
Let us prove the first inequality in \eqref{gamma+-} for $\rho \in \PP$ in the special case that $\rhop= \rhop_1+\rhop_2$ with $\|\rhop_1\|=\|\rhop_2\|$, $\rhop_i$ smooth, with disjoint and compact supports; in addition we assume that $\dist(\spt\rho^\#,\spt\rhop)>0$. Denote by $\rho_\eps$ a family weakly* converging to $\rho^\#$ and such that
$$G^+(\rho^\#)=\limsup_{\eps\to0} G_\eps(\rho_\eps).$$
Define, for $\delta$ small enough and $\theta_\delta$ as in Lemma \ref{localize}, $\tilde\rho_\eps=\theta_\delta\rho_\eps+\nu_\eps$ where we choose $\nu_\eps= a_\eps \rhop=a_\eps \rhop_1+a_\eps \rhop_2$ and $a_\eps$ is such $\tilde \rho_\eps \in \PP$. Then the assumptions of Lemma \ref{localiz} are satisfied. Then $\tilde\rho_\eps\weak\theta_\delta\rho^\#+\rhop$ and we have by Lemma \ref{localiz}
$$G_\eps(\tilde\rho_\eps)\le G_\eps(\rho_\eps)+\eps K_\delta,$$
where the constant $K_\delta$ only depends on $\delta$.
Passing to the limit as $\eps\to0$ we have
$$G^-(\theta_\delta\rho^\#+\rhop)\le G^+(\rho^\#).$$
Using now the lower semicontinuity of $G^-$ we have, as $\delta\to0$
$$G^-(\rho^\#+\rhop)\le G^+(\rho^\#).$$
In order to extend the inequality above to the general case of $\rho \in \PP^-$ with general $\rhop$ we use again the lower semicontinuity property of $G^-$ and a density argument.

To prove the second inequality in \eqref{gamma+-} we argue in a similar way: take as $\rho_\eps$ a family weakly* converging to $\rho=\rho^\#+\rhop$ and such that
$$G^+(\rho^\# + \rhop)=\limsup_{\eps\to0} G_\eps(\rho_\eps).$$
Thanks to Lemma \ref{localiz}, we may construct $\tilde\rho_\eps=\theta_\delta\rho_\eps+\nu_\eps$ as above, with the cut-off function $\theta_\delta$ as in \eqref{thetad} and taking, for example,
$$\nu_\eps(x)= a_\eps\left[h(x-\frac{x_0}{\eps})+h(x+\frac{x_0}{\eps})\right],$$
with $h \geq 0$ smooth and compactly supported, $x_0 \neq 0$ and $a_\eps$ suitably chosen so
that $\tilde \rho_\eps \in \Pcal$. We get $\nu_\eps\weak0$ and 
$$G_\eps(\tilde\rho_\eps)\le G_\eps(\rho_\eps)+\eps K_\delta.$$
Then, passing to the limit as $\eps\to0$, we have
$$G^+(\rho^\# + \rhop)\ge G^-(\theta_\delta\rho);$$
passing now to the limit as $\delta\to0$ gives $G^+(\rho^\#+ \rhop)\ge G^-(\rho^\#)$ as required.
\end{proof}

By Theorem \ref{rhosharp} all $\Gamma$-limits of subsequences $G_{\eps_n}$ depend only on $\rho^\#$. In Theorem \ref{explform} we will characterize by an explicit formula the $\Gamma$-limit of $G_{\eps_n}$ independently of the subsequence $\eps_n$, obtaining in this way the $\Gamma$-limit of the whole family $G_\eps$.

The following definition of partial or fractional transport cost appeared in the $w^*$ relaxation $\Cb$ of $C$ in equation \eqref{Cpart} and will appear in the formula for the $\Gamma$-limit:
$$C(\rho,2\alpha-1)=
\begin{cases}
\min\big\{\Ccal(\mu)\ :\ 0\le\mu\le\rho,\ \|\mu\|=2\alpha-1\big\}&\mbox{if }\alpha\ge1/2,\\
0&\mbox{otherwise.}
\end{cases}
$$

\begin{defi}
For all $b,Z>0$ and $0\le\alpha\le1$ we define
\be\label{gb}
g_b(Z,\alpha):=\inf_{\|\rho\|=\alpha}\left\{\Tcal(\rho)+bC(\rho,2\alpha-1)-Z\,\Ucal_0(\rho)\right\}\,.
\ee
\end{defi}

\begin{rema} Using again the different homogeneities of the three addenda with respect to the rescaling of measures we have
$$g_b(Z,\alpha) =-\frac14\sup_{\|\rho\|=\alpha}\frac{\big(Z\,\Ucal_0(\rho)-C(\rho,2\alpha-1)\big)_+^2}{\Tcal(\rho)}$$
so that for $\alpha\le1/2$ the equality
$$g_b(Z,\alpha)=-\frac14\alpha Z^2$$
holds. Moreover from \eqref{gb}, again rescaling the measures, for every $\eps>0$ we have
\[ 
\eps^2\Tcal(\rho)+\eps C(\rho,2\alpha-1)-\eps Z\,\Ucal_0(\rho)\ge g_b(Z,\alpha)
\]
for all $\rho\in\PP^-$ with $\|\rho\|=\alpha$. 
\end{rema}

It is clear from the definition of $g_b$ that it is concave non-increasing in $Z$, and we shall prove in Lemma \ref{gblemm} (using the other equivalent Definition \ref{gbgeneral}) that it is convex non-increasing in $\alpha$. Discussing the existence of minimizers is out of the scope of this paper, however we will need some almost optimal measures which we study in the next proposition.

\begin{prop}\label{almostoptimal} For all $\lambda >0$ there exist $r=r(\lambda)$ and $\rho \in \PP^-$ with $\|\rho\|=\alpha$ such that $\spt(\rho)\subset B(0, r)$ and 
$$\Tcal(\rho)+bC(\rho,2\alpha-1)-Z\, \Ucal_0(\rho)\le g_b(Z,\alpha)+ \lambda.$$
\end{prop}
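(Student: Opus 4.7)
The plan is to build a compactly supported $\tilde\rho_R$ with $\|\tilde\rho_R\|=\alpha$ by smoothly truncating a near-minimizer of \eqref{gb} and restoring the mass with a bump placed far from the origin, then to check that each of the three terms of the functional converges as the cutoff radius $R$ grows. By the definition of $g_b(Z,\alpha)$ as an infimum, pick $\rho_0\in\PP^-$ with $\|\rho_0\|=\alpha$ and
\[
\Tcal(\rho_0)+bC(\rho_0,2\alpha-1)-Z\,\Ucal_0(\rho_0)\le g_b(Z,\alpha)+\lambda/2;
\]
in the nontrivial case $\alpha\ge 1/2$, pick also $\mu_0\in\PP^-$ with $0\le\mu_0\le\rho_0$ and $\|\mu_0\|=2\alpha-1$ nearly optimal in $C(\rho_0,2\alpha-1)$, and let $P_0$ be a correspondingly nearly optimal plan for $\Ccal(\mu_0)$ (for $\alpha<1/2$ the correlation vanishes and the construction below simplifies).

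Fix a smooth cutoff $\theta_R\in C_c^\infty(\R^3)$ with $\theta_R\equiv 1$ on $B(0,R)$, $\theta_R\equiv 0$ off $B(0,2R)$ and $|\nabla\theta_R|\le 2/R$; fix a smooth probability density $\eta\in C_c^\infty(\R^3)$ supported in the unit ball, and let $\eta_R$ be the translate of $\eta$ centered at $y_R$ with $|y_R|\to+\infty$ and $\spt\eta_R\cap B(0,2R)=\emptyset$. Define
\[
\tilde\rho_R:=\theta_R^2\rho_0+\epsilon_R\eta_R,\qquad \tilde\mu_R:=\theta_R^2\mu_0+\epsilon'_R\eta_R,
\]
with $\epsilon_R:=\alpha-\|\theta_R^2\rho_0\|$ and $\epsilon'_R:=(2\alpha-1)-\|\theta_R^2\mu_0\|$. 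Then $\|\tilde\rho_R\|=\alpha$, $\|\tilde\mu_R\|=2\alpha-1$, and the identity
\[
\epsilon'_R-\epsilon_R=(\alpha-1)+\|\theta_R^2(\rho_0-\mu_0)\|\le(\alpha-1)+(1-\alpha)=0
\]
gives $\tilde\mu_R\le\tilde\rho_R$, so $\tilde\mu_R$ is admissible in $C(\tilde\rho_R,2\alpha-1)$ and $\tilde\rho_R$ is compactly supported in $B(0,2R)\cup\spt\eta_R$. Since the two pieces of $\tilde\rho_R$ have disjoint supports, $\Tcal(\tilde\rho_R)=\Tcal(\theta_R^2\rho_0)+\epsilon_R\Tcal(\eta)$; standard cutoff estimates based on $|\nabla\theta_R|^2\le 4/R^2$ and dominated convergence yield $\Tcal(\tilde\rho_R)\to\Tcal(\rho_0)$, while monotone convergence together with $\epsilon_R\Ucal_0(\eta_R)\le\epsilon_R/(|y_R|-1)\to 0$ gives $\Ucal_0(\tilde\rho_R)\to\Ucal_0(\rho_0)$.

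The delicate step is to prove $\Ccal(\tilde\mu_R)\le\Ccal(\mu_0)+o(1)$. The naive deterministic plan $(T,T)^\#P_0$, with $T$ the identity on $B(0,R)$ and a measurable map sending $B(0,R)^c$ onto $\spt\eta_R$, fails because pairs $(x,y)\in(B(0,R)^c)^2$ would be sent arbitrarily close together inside $\spt\eta_R$ and blow up the Coulomb cost. I would bypass this by introducing the Markov kernel
\[
k_R(u,dx):=\theta_R^2(u)\delta_u(dx)+(1-\theta_R^2(u))\eta_R(dx),
\]
a probability measure in $dx$ for each $u$, and setting
\[
\tilde P_R(dx,dy):=\int k_R(u,dx)\,k_R(v,dy)\,P_0(du,dv).
\]
A direct Fubini computation shows both marginals of $\tilde P_R$ equal $\tilde\mu_R$. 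Expanding $\int c\,d\tilde P_R$ along the four cases of $k_R\otimes k_R$ produces four terms: the main term $\int\theta_R^2(u)\theta_R^2(v)c(u,v)\,dP_0\le\Ccal(\mu_0)$; a bump-bump term $C_\eta\int(1-\theta_R^2(u))(1-\theta_R^2(v))\,dP_0\le C_\eta\epsilon'_R\to 0$, where $C_\eta:=\int\int|x-y|^{-1}\eta(x)\eta(y)\,dx\,dy<+\infty$ is translation invariant; and two mixed terms each bounded by $\int\Ucal_y(\mu_0)\,\eta_R(dy)$, which vanishes since $\Ucal_y(\mu_0)\le\Ucal_y(\rho_0)\to 0$ uniformly for $y$ escaping to infinity, as follows from combining $\Ucal_y(\rho_0\lfloor B(0,M))\le\alpha/(|y|-M)$ with Lemma \ref{l1} applied to $\rho_0\lfloor B(0,M)^c$ and then letting $M$ and $|y|$ grow.

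Putting everything together, for $R$ large enough
\[
\Tcal(\tilde\rho_R)+b\,C(\tilde\rho_R,2\alpha-1)-Z\,\Ucal_0(\tilde\rho_R)\le F(\rho_0)+\lambda/2\le g_b(Z,\alpha)+\lambda,
\]
where $F$ denotes the functional inside \eqref{gb}, so $\tilde\rho_R$ is the required near-minimizer with $r(\lambda)$ equal to the radius of any ball containing $B(0,2R)\cup\spt\eta_R$. The main obstacle is exactly the diagonal singularity in the Coulomb cost on the "both outside $B(0,R)$" pairs; the randomized kernel $k_R$ is the key device, since it independently smears the two images over $\eta_R$, turning the singular diagonal into the harmless constant $C_\eta$ weighted by the vanishing factor $\epsilon'_R$.
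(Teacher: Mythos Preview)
Your proof is correct. The overall strategy---truncate a near-minimizer $\rho_0$ of \eqref{gb} by a smooth cutoff, restore the lost mass with a compactly supported bump placed far away, and show that all three terms of the functional change by $o(1)$---is the same as the paper's. The differences lie in the execution, particularly for the correlation term.

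The paper proceeds by recycling Lemmas~\ref{localize} and~\ref{localiz} (the localization lemmas, applied here with $\eps=1$ and $\delta$ taken large rather than small). In that framework the bump $\nu$ is split as $\nu^1+\nu^2$ with disjoint supports and equal masses, and the dangerous ``both outside'' portion of the transport plan is handled by the deterministic piece $\nu^1\otimes\nu^2+\nu^2\otimes\nu^1$, whose Coulomb cost is bounded by $1/\dist(\spt\nu^1,\spt\nu^2)$. Your construction instead uses a \emph{single} bump $\eta_R$ and avoids the diagonal singularity by the randomized kernel $k_R$: the ``bump-bump'' contribution becomes $\eta_R\otimes\eta_R$ weighted by the vanishing mass factor bounded by $\epsilon'_R$, with finite self-interaction energy $C_\eta$. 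This is a genuinely different device, arguably cleaner, and it makes the argument self-contained rather than dependent on the earlier localization lemmas. The paper's route, on the other hand, has the virtue of reusing machinery already in place and making the parallel with Lemma~\ref{localiz} explicit.

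One small remark: your estimate of the mixed terms via $\int\Ucal_y(\rho_0)\,\eta_R(dy)\to0$ is correct (Lemma~\ref{l1} applied to the tail $\rho_0\lfloor B(0,M)^c$ after translating the potential to $y$), but you could equally bound them by $(|y_R|-2R-1)^{-1}$ directly, since the factor $\theta_R^2(u)$ forces $u\in B(0,2R)$ while $y\in\spt\eta_R$ lies outside. Either way the conclusion stands.
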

\begin{proof} This is an indirect variant of Lemma \ref{localiz} in which the fractional transport cost appears and $\eps=1$.
So we carefully apply Lemmas \ref{localize} and \ref{localiz}. 
We start from $\tilde \rho \in \PP^-$ such that $\|\tilde \rho\|=\alpha$ and
$$\Tcal(\tilde \rho)+bC(\tilde \rho, 2\alpha-1) -Z\, \Ucal_0(\tilde \rho) \le g_b(Z,\alpha)+ \frac{\lambda}{3}.$$
We need to modify $\tilde \rho$ so that the support becomes compact. Let also $\tilde\mu\le\tilde\rho$ with $\|\tilde\mu\|=2\alpha-1$ and $\Ccal(\tilde\mu)=C(\tilde\rho,2\alpha-1)$. Consider $\theta_\delta$ and $\nu$ as in Lemma \ref{localiz} with $\delta$ large enough so that we may have $K$ small and
$$
2K+\frac{Z+2b}{\delta} +\frac{b}{\dist(\spt(\nu_\eps^1),\spt( \nu_\eps^2))} \leq \frac{\lambda}{3}.
$$
Here we set $\rho = \theta_\delta \tilde \rho+\nu$ and assume $\| \rho\|=\alpha$ (instead of $\rho \in \PP$ as in Lemma \ref{localiz}). Then
$$\int\nu\,dx=\alpha-\|\theta_\delta\tilde\rho\|.$$
By Lemma $\ref{localize}$ with $\eps=1$ we have
$$\Tcal( \rho ) - Z \Ucal_0 (\rho)\le\Tcal(\tilde \rho)- Z\Ucal_0 (\tilde \rho) + \frac{\lambda}{3}\;.$$
To estimate the fractional transport term $C(\rho, 2 \alpha -1)$ we consider $\mu= \theta_\delta \tilde \mu + \beta_0 \nu$ where $\beta_0$ is such that the total variation of $\mu$ is equal to $2\alpha-1$. To show that $\mu \leq \rho$ we need to show that $\beta_0 \leq 1$. This is equivalent to say that 
$$ 2 \alpha -1 -\int\theta_\delta d\tilde\mu \leq \int d\nu =\alpha - \int \theta_\delta d\tilde\rho$$ 
which is the inequality 
$$ \int \theta_\delta d (\tilde\rho-\tilde \mu) \leq 1-\alpha = \int d(\tilde \rho-\tilde \mu).$$
We then apply the transport estimate of Lemma \ref{localiz}, up to a rescale of the measures, to get
$$C(\rho, 2\alpha-1) \leq \Ccal (\mu) \leq \Ccal (\tilde \mu) +\frac{\lambda}{3} = C(\tilde\rho, 2\alpha-1)+ \frac{\lambda}{3}.$$
which concludes the proof.
\end{proof}


A short investigation of the structure of certain optimal transport plans will be used in the next theorem.

\begin{lemm}\label{staylocal} Let $X_0,\dots,X_M\in\R^d$ and let $\delta>0$ be such that $\delta\ll\min_{i,j}|X_i- X_j|$. Consider $\rho= \sum_{i=0} ^{M} \alpha_i \rho_i$ with $\rho_i$ probability measures such that $\spt \rho_i \subset B(X_i, \delta)$ and $\sum_{i=0}^M \alpha_i=1$. Let $P$ be an optimal transport plan for $\rho$, then 
\[
\begin{cases}
P(B(X_i, \delta) \times B(X_i, \delta))=0 &\text{if }\alpha_i \le1/2\,,\\
P(B(X_i, \delta) \times B(X_i, \delta))=2 \alpha_i -1 &\text{if }\alpha_i >1/2\,.
\end{cases}
\]
It follows that there exists $K$ depending only on $\min_{i \neq j}{|X_i-X_j|-2\delta}$ such that the following alternative holds
\begin{enumerate}
\item \label{stimup} $C(\rho)\le K$ if $\alpha_i \le1/2$ for all $i$,
\item \label{stimboth} if $\alpha_i>1/2$ for some $i$ then 
$$C(\alpha_i \rho_i, 2\alpha_i-1) \leq C(\rho) \leq C(\alpha_i \rho_i, 2\alpha_i-1)+K .$$
\end{enumerate}
\end{lemm}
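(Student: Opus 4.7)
My approach is a local exchange argument that exploits the strong repulsion of the Coulomb cost at short range. Set $B_i := B(X_i, \delta)$ and $D := \min_{i \neq j}|X_i - X_j| - 2\delta > 0$; by hypothesis $D \gg \delta$. Two numerical facts drive the argument: $c(x,y) \geq 1/(2\delta)$ whenever $x, y$ both belong to the same $B_i$, and $c(x,y) \leq 1/D$ whenever $x \in B_i$, $y \in B_j$ with $i \neq j$. Since $\spt\rho$ is compact and $c$ is lower semicontinuous, an optimal plan $P$ for $C(\rho)$ exists, and by averaging $P$ with its transpose I may assume $P$ is symmetric; in particular $\spt P \subset \bigcup_{i,j} B_i \times B_j$.

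I then pin down the block masses $p_i := P(B_i \times B_i)$. The marginal identities $\pi_1^\# P = \pi_2^\# P = \rho$ yield $P(B_i \times B_i^c) = P(B_i^c \times B_i) = \alpha_i - p_i$ and $P(B_i^c \times B_i^c) = 1 - 2\alpha_i + p_i$, so $p_i \geq \max(0, 2\alpha_i - 1)$. Arguing by contradiction, if $p_i > \max(0, 2\alpha_i - 1)$ then $P_{|B_i \times B_i}$ and $P_{|B_i^c \times B_i^c}$ are both nonzero. For $\eta > 0$ small I set $P_1 := (\eta/p_i)\, P_{|B_i \times B_i}$ and $P_2 := (\eta/P(B_i^c \times B_i^c))\, P_{|B_i^c \times B_i^c}$ (each of mass $\eta$) and consider the swap
\[
\tilde P := P - P_1 - P_2 + \tfrac{1}{\eta}\bigl(\pi_1^\# P_1 \otimes \pi_2^\# P_2 + \pi_1^\# P_2 \otimes \pi_2^\# P_1\bigr).
\]
One verifies that $\tilde P \geq 0$, that $\pi_1^\# \tilde P = \pi_2^\# \tilde P = \rho$, and that
\[
\int c\, d\tilde P - \int c\, dP \leq -\frac{\eta}{2\delta} + \frac{2\eta}{D},
\]
because the removed mass on $B_i \times B_i$ pays at least $1/(2\delta)$ per unit while the added cross mass (on $B_i \times B_j$ with $j \neq i$) pays at most $1/D$ per unit. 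The scale separation $\delta \ll \min|X_i - X_j|$ makes this strictly negative, contradicting optimality; hence $p_i = \max(0, 2\alpha_i - 1)$.

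The dichotomy then follows quickly. If $\alpha_i \leq 1/2$ for every $i$, $P$ is supported in $\bigcup_{i \neq j} B_i \times B_j$ where $c \leq 1/D$, so $C(\rho) \leq 1/D$. If instead $\alpha_i > 1/2$ for some index, the restriction $P_{|B_i \times B_i}$ has total mass $2\alpha_i - 1$ and its (equal, symmetric) marginal $\mu_i$ satisfies $\mu_i \leq \alpha_i \rho_i$, so $\mu_i$ is admissible for $C(\alpha_i \rho_i, 2\alpha_i - 1)$ while $P_{|B_i \times B_i}$ is itself a plan for $\Ccal(\mu_i)$; this chain gives
\[
C(\alpha_i \rho_i, 2\alpha_i - 1) \leq \Ccal(\mu_i) \leq \int c\, dP_{|B_i \times B_i} \leq C(\rho).
\]
For the reverse estimate I pick a minimizer $\mu_i^*$ with optimal plan $Q$ for $C(\alpha_i \rho_i, 2\alpha_i - 1)$, set $\eta_i := \alpha_i \rho_i - \mu_i^*$ and $\eta_{i^c} := \rho - \alpha_i \rho_i$ (both of mass $1 - \alpha_i$, the latter supported in $\bigcup_{j \neq i} B_j$), and test $C(\rho)$ with the plan $Q + (1-\alpha_i)^{-1}(\eta_i \otimes \eta_{i^c} + \eta_{i^c} \otimes \eta_i)$, whose cross contribution is bounded by $2/D$. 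Taking $K := 2/D$ then works in both cases and depends only on $\min_{i \neq j}|X_i - X_j| - 2\delta$ as required.

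The main technical obstacle is the swap step: one has to check that $\tilde P$ is genuinely nonnegative for small $\eta$ (which follows from $P_1 + P_2 \leq P_{|B_i \times B_i} + P_{|B_i^c \times B_i^c} \leq P$, the two blocks being disjoint) and that the cost estimate is sharp enough to exploit the scale separation via $1/(2\delta) > 2/D$. Everything else reduces to bookkeeping with marginals.
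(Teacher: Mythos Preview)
Your proof is correct and follows essentially the same strategy as the paper's: an exchange argument to pin down the diagonal block masses, then the marginal of the diagonal restriction for the lower bound in (2), and an explicit test plan for the upper bound. The only cosmetic differences are that the paper swaps against a single off-diagonal block $B_j\times B_k$ rather than the whole of $B_i^c\times B_i^c$, and for the upper bound it uses an optimal plan for $\rho-\mu$ together with case (1) instead of your tensor-product cross plan; both variants give a constant depending only on $D$.
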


Note that in the above statement we add a point $X_0$, not corresponding to a nucleus, to the points $X_1,\ldots,X_M$: this will be handy in the proof of Theorem \ref{explform} below.

\begin{proof} Let $i\in \{0, \dots, M\}$ and let $l_1:=P(B(X_i,\delta)\times B(X_i,\delta))$, we may compute
\[\begin{split}
P((B(X_i,\delta)\times\R^d)\cup(\R^d\times B(X_i,\delta)))
&=P(B(X_i,\delta)\times\R^d)+P(\R^d\times B(X_i,\delta))\\
&\quad-P(B(X_i,\delta)\times B(X_i,\delta))\le2\alpha_i-l_1.
\end{split}\]
For the case $\alpha_i\le1/2$, if $l_1>0$ then, since $2\alpha_i-l_1<1$, there exist $j,k\neq i$ such that 
$$ l_2 := P (B(X_j,\delta) \times B(X_k, \delta)) >0. $$
Define $s=\min\{l_1,l_2\}$ and
$$P_1=\frac{s}{l_1}P_{|B(X_i,\delta)\times B(X_i,\delta) },\qquad P_2=\frac{s}{l_2}P_{|B(X_j,\delta)\times B(X_k,\delta) },$$
and rewrite $P=P_1+P_2+P_R$ where $P_R$ is defined by this same equality. Since the quantity $c(P)=\int_{(\R^3)^2} c\,dP$ is linear in $P$ we have
$$c(P)= c(P_1)+c(P_2)+c(P_R)\ge\frac{s}{2\delta}+c(P_R).$$
Define $\tilde P$ by
$$\tilde P=\pi_1^\# P_1\otimes\pi_2^\# P_2+\pi_1^\# P_2\otimes\pi_2^\# P_1+P_R\,,$$
so that it has the same marginal as $P$. Concerning the transportation cost we have
$$c(\tilde P) \leq \frac{s}{|X_i-X_k| -2 \delta}+\frac{s}{|X_j-X_i| -2 \delta} + c(P_R)$$ 
which is smaller than $c(P)$ for $\delta$ as in the assumptions, and this contradicts the optimality of $P$.

Analogously, if $\alpha_i>1/2$, since $\rho\big(\R^d\setminus B(X_i,\delta)\big)=1-\alpha_i$ we have
\[\begin{split}
l_1&=P\big(B(X_i,\delta)\times B(X_i,\delta)\big)\\
&=P\big(B(X_i,\delta)\times\R^d\big)-P\big(B(X_i,\delta)\times(\R^d\setminus B(X_i,\delta)\big)\\
&\ge\alpha_i-(1-\alpha_i)=2\alpha_i-1.
\end{split}\]
The strict inequality would imply again that $2 \alpha_i-l_1 <1$ and then again there exist $j,k\neq i$ such that 
$ P (B(X_j,\delta) \times B(X_k, \delta)) >0.$ This would contradict the optimality of $P$ as in the first case.

To deduce \eqref{stimup} we consider an optimal transport plan $P_{opt}$ and remark that for $x$ in the support of $P_{opt}$ for all $i,j$
$$\frac{1}{|x_i-x_j|} \leq \frac{1}{ \min_{i \neq j}{|X_i-X_j|-2\delta}}$$
so that we can take
$$K=\frac{N(N-1)}{2\min_{i\ne j}{|X_i-X_j|-2\delta}}\;.$$
To prove the first inequality in \eqref{stimboth} consider a symmetric optimal transport plan $P$ for $C(\rho)$. Let $P'=P\lfloor B(X_i,\delta)\times B(X_i,\delta)$ and let $\mu:= \pi_1^\# P' (= \pi_2^\# P')$. Clearly $0 \leq \mu \leq \alpha_i\rho_i$ and $\|\mu\|= \|P'\|= 2 \alpha_i-1$,
then 
\[C(\alpha_i\rho_i, 2 \alpha_i-1) \leq c(P') \leq c(P')+c(P-P')= C(\rho).\]
For the second inequality in \eqref{stimboth}, let $\mu \leq \alpha_i\rho_i$ with $\|\mu\|=2\alpha_i -1$ be such that 
\[\Ccal(\mu)= C(\alpha_i\rho_i, 2 \alpha_i-1),\]
and let $P_\mu$ be an optimal plan for $\mu$. Consider an optimal plan $P_R$ for $\rho-\mu$. The plan $P=P_\mu+P_R \in \Pi(\rho)$ and then
$$C(\rho)\le c(P)=c(P_\mu)+c(P_R)=\Ccal(\mu)+\Ccal(\rho-\mu)=C(\alpha_i\rho_i,2\alpha_i-1)+\Ccal(\rho-\mu).$$
We conclude observing that
$$\Ccal(\rho-\mu)=\Norm{\rho-\mu}C\left(\frac{\rho-\mu}{\Norm{\rho-\mu}}\right)\le K\Norm{\rho-\mu}\le K$$ since
$$\rho-\mu= \alpha_0 \rho_0+\dots+(1-\alpha_i)\rho_i+\dots+\alpha_M \rho_M$$
and
$$\alpha_0,\dots,1-\alpha_i,\dots,\alpha_M\le\frac{1}{2}<\alpha_i=\frac{\|\rho-\mu\|}{2}$$
so \eqref{stimup} applies to $\frac{\rho-\mu}{\Norm{\rho-\mu}}$.
\end{proof}

By Theorem \ref{rhosharp} we may now focus on the formula for $G(\rho^\#)$.

\begin{theo}\label{explform}
Let $\rho\in\PP^-$ be such that
$$\rho^\#=\sum_{i=1}^M \alpha_i\delta_{X_i}\qquad\hbox{with $\alpha_i\ge0$ and }\sum_i \alpha_i\le1.$$
Then the following formula holds:
$$G(\rho)=\sum_i g_b(Z_i,\alpha_i).$$
\end{theo}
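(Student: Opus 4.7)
By Theorem~\ref{rhosharp}, $G(\rho)=G(\rho^\#)$, so we may assume $\rho=\rho^\#=\sum_{i=1}^M\alpha_i\delta_{X_i}$; since $\sum_i\alpha_i\le 1$, at most one $\alpha_i$ can exceed $1/2$. The plan is to prove separately $G^+(\rho)\le\sum_i g_b(Z_i,\alpha_i)\le G^-(\rho)$. For the upper bound, fix $\lambda>0$ and, using Proposition~\ref{almostoptimal}, choose for each $i$ a compactly supported $\sigma_i\in\PP^-$ with $\|\sigma_i\|=\alpha_i$ and
\[
\Tcal(\sigma_i)+bC(\sigma_i,2\alpha_i-1)-Z_i\,\Ucal_0(\sigma_i)\le g_b(Z_i,\alpha_i)+\lambda/M.
\]
Set $\tau_i^\eps:=(\tau_{X_i})^\#\sigma_i^{1/\eps}$, supported in $B(X_i,\eps r)$ with mass $\alpha_i$, and add a dispersive correction $\nu_\eps$ as in Example~\ref{exthetanu}, with two disjoint lobes of equal mass $(1-\sum_i\alpha_i)/2$ placed far from the nuclei, so that $\rho_\eps:=\sum_i\tau_i^\eps+\nu_\eps\in\PP$ and $\rho_\eps\weak\rho$. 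The scalings $T((\cdot)^{1/\eps})=\eps^{-2}T(\cdot)$, $U_0((\cdot)^{1/\eps})=\eps^{-1}U_0(\cdot)$ and $C((\cdot)^{1/\eps},m)=\eps^{-1}C(\cdot,m)$, together with pairwise disjointness of supports and translation invariance, yield $\eps^2T(\rho_\eps)=\sum_i\Tcal(\sigma_i)+o(1)$ and $\eps U(\rho_\eps)=\sum_iZ_i\,\Ucal_0(\sigma_i)+o(1)$; the upper bound of Lemma~\ref{staylocal} gives $\eps bC(\rho_\eps)\le bC(\sigma_j,2\alpha_j-1)+o(1)$ for the distinguished $j$ with $\alpha_j>1/2$ (or $o(1)$ if no such $j$ exists). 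Taking $\limsup$ and letting $\lambda\to 0$ proves $G^+(\rho)\le\sum_i g_b(Z_i,\alpha_i)$.

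For the lower bound, take any $\rho_\eps\in\PP$ with $\rho_\eps\weak\rho$ and fix a small $\delta>0$. Lemma~\ref{localiz} applied with $\theta_\delta$ and a twin-lobe $\nu_\eps$ (Example~\ref{exthetanu}) gives $G_\eps(\theta_\delta\rho_\eps+\nu_\eps)\le G_\eps(\rho_\eps)+\eps K_\delta$. Decompose $\theta_\delta\rho_\eps=\sum_i\rho_\eps^i$ with $\rho_\eps^i:=\theta_\delta\rho_\eps\lfloor B(X_i,2\delta)$, and un-scale by $\sigma_\eps^i:=\bigl((\tau_{-X_i})^\#\rho_\eps^i\bigr)^\eps$, so that $\rho_\eps^i=(\tau_{X_i})^\#(\sigma_\eps^i)^{1/\eps}$ and $\alpha_i^\eps:=\|\sigma_\eps^i\|=\|\rho_\eps^i\|\to\alpha_i$. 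The same scaling identities give $\eps^2T(\rho_\eps^i)=\Tcal(\sigma_\eps^i)$ and $\eps Z_iU_{X_i}(\rho_\eps^i)=Z_i\,\Ucal_0(\sigma_\eps^i)$, while the cross potentials $\eps Z_kU_{X_k}(\rho_\eps^i)$ for $k\ne i$, together with $\eps^2T(\nu_\eps)$ and $\eps U(\nu_\eps)$, are all $o(1)$ by support separation and by $\eps\Tcal(\nu_\eps)\le K$. For the correlation term, the lower bound of Lemma~\ref{staylocal}, applied to the $M+2$ clusters formed by the $\rho_\eps^i$ and the two lobes of $\nu_\eps$ (the latter with masses $\le 1/2$), yields
\[
C(\theta_\delta\rho_\eps+\nu_\eps)\ge C(\rho_\eps^j,2\alpha_j^\eps-1)=\eps^{-1}C(\sigma_\eps^j,2\alpha_j^\eps-1)
\]
for the (unique) index $j$ with $\alpha_j^\eps>1/2$ if any (otherwise this bound is trivial). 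Since $C(\sigma_\eps^i,2\alpha_i^\eps-1)=0$ whenever $\alpha_i^\eps\le 1/2$, one deduces
\[
G_\eps(\rho_\eps)+\eps K_\delta\ge\sum_i\bigl[\Tcal(\sigma_\eps^i)+bC(\sigma_\eps^i,2\alpha_i^\eps-1)-Z_i\,\Ucal_0(\sigma_\eps^i)\bigr]-o(1)\ge\sum_i g_b(Z_i,\alpha_i^\eps)-o(1),
\]
the last step being the definition of $g_b$ applied to each admissible $\sigma_\eps^i$. Passing to $\liminf_{\eps\to 0}$ and invoking the lower semicontinuity of $g_b(Z_i,\cdot)$ (which follows from its convexity, cf.\ the forthcoming Lemma~\ref{gblemm}) gives $G^-(\rho)\ge\sum_i g_b(Z_i,\alpha_i)$.

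The delicate step is the correlation term: the fractional cost $C(\cdot,2\alpha-1)$ entering the definition of $g_b$ must be matched both from above and below to the genuine cost $C$ of the approximating measures. This matching is clean only because $N=2$: the constraint $\sum_i\alpha_i\le 1$ then forces at most one cluster to carry more than half the mass, and Lemma~\ref{staylocal} identifies its self-interaction with the partial transport of mass $2\alpha_j-1$. For $N\ge 3$ the analogous structural result is not presently available, which is precisely why the statement is restricted to $N=2$.
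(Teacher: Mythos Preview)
Your proof is correct and follows essentially the same route as the paper's: reduce to $\rho=\rho^\#$ via Theorem~\ref{rhosharp}, build a recovery sequence from the near-optimal compactly supported measures of Proposition~\ref{almostoptimal} rescaled around each nucleus, and for the lower bound localize via Lemma~\ref{localiz} and control the correlation term through Lemma~\ref{staylocal}. The only cosmetic differences are that the paper sends the excess mass to one (or two) points $Y_\eps\to\infty$ rather than a fixed twin-lobe $\nu_\eps$, and that the paper invokes homogeneity directly where you perform the explicit un-scaling $\sigma_\eps^i=((\tau_{-X_i})^\#\rho_\eps^i)^\eps$; neither difference affects the argument.
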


\begin{proof}
It follows from Theorem \ref{rhosharp} that $G(\rho)=G(\rho^\#)$, so we shall prove the result for $\rho=\rho^\#$.
	
Let $\delta>0$ be such that $\delta\ll\min_{i,j}|X_i-X_j|$. We start with the $\Gamma-\limsup$ inequality. For every $\eps>0$ we add to the $M$-uple $(X_1, \dots, X_M)$ an additional point $Y_\eps$ such that
$$\lim_{\eps\to0}\|Y_\eps\| \to \infty. $$
Let $\lambda>0$, let $\rho_i$ be a measure with compact support obtained from Proposition \ref{almostoptimal} applied with parameters 
$\lambda_i=\frac{\lambda}{M},\alpha_i,Z_i$. Let $h\in\C^\infty_0\big(B(0,\delta)\big)$ a positive function such that
$$\int h\,dx=1\qquad\hbox{and}\qquad\int\frac{|\nabla h|^2}{h}\,dx<+\infty$$
and define
$$\rho_\eps^0(x)=\alpha_0 h(x-Y_\eps)\qquad\hbox{with }\alpha_0=1-\sum_{i=1}^M \alpha_i\;.$$
For $i=1,\dots,M$ let
$$\rho_\eps^i(x)=\rho_i^{1/\eps}(x-Xi) = \frac{1}{\eps^3}\rho_i \left(\frac{x-X_i}{\eps}\right)\qquad\hbox{and}\qquad\rho_\eps(x)=\sum_{i=0}^M\rho_\eps^i(x)\;.$$
For $\eps$ small enough the supports of $\rho_\eps^0$ and $\rho_\eps^i$ are contained in $B(Y_\eps, \delta)$ and $B(X_i, \delta)$ respectively. 
We estimate $G_\eps (\rho_\eps)$ from above. Since
\[\begin{split}
&T(\rho_\eps)=\sum_{i=1}^M \Tcal(\rho_\eps^i)+\alpha_0 \Tcal(h),\\
&U(\rho_\eps)\ge\sum_{i=1}^M Z_i \, \Ucal_{X_i}(\rho^i_\eps),
\end{split}\]
we deduce
\be\label{sommario1}
G_\eps (\rho_\eps) \leq \sum_{i=1}^M (\eps ^2 \Tcal(\rho_\eps^i)-\eps Z_i\, \Ucal_{X_i} (\rho^i_\eps))+\eps^2\alpha_0 \Tcal(h) + \eps C(\rho_\eps).
\ee
We then need to decompose $C(\rho_\eps)$.
By Lemma \ref{staylocal} if $\alpha_i\le1/2$ for all $i$ then $C(\rho_\eps) \leq K$ and passing to the $\limsup$
$$ G^+ (\rho^\#)\le\limsup_{\eps \to 0} G_\eps (\rho_\eps)\leq \sum_i g_b(Z_i,\alpha_i) + \lambda $$
because the last two terms in (\ref{sommario1}) go to $0$, and the first term is computed by the homogeneity of the energy and the choice of $\rho_i$.
If for one $i\in \{1,\dots, M\}$, $\alpha_i>1/2$, we assume without loss of generality that it is $\alpha_1$, then by Lemma \ref{staylocal}
$$C(\rho_\eps)\le C(\rho_\eps^1, 2\alpha_1-1) +K.$$ 
Then 
\[\begin{split}
G_\eps(\rho_\eps)&\le\eps^2 T(\rho_\eps^1)-\eps U_{Z_i}(\rho^1_\eps)+\eps C_2(\rho_\eps^1,2\alpha_1-1)\\
&\qquad+\sum_{i=2}^M (\eps^2 T(\rho_\eps^i)-\eps U_{Z_i}(\rho^i_\eps))+\eps^2\alpha_0 T(h)+\eps K
\end{split}\]
and again we conclude by the homogeneity of the energy and the choice of $\rho_i$. The case $\alpha_0>1/2$ can be excluded by considering a second sequence $\tilde Y_\eps=-Y_\eps$ and then defining
$$\rho^0_\eps=\frac{\alpha_0}{2}h(x-Y_\eps)+\frac{\alpha_0}{2}h(x-\tilde Y_\eps),$$
which has the same properties needed in the proof but do not concentrate too much mass in a ball of radius $\delta$: we are then applying Lemma \ref{staylocal} with the $M+2$ points $X_1,\ldots,X_M,Y_\eps,\tilde Y_\eps$.

We now deal with the $\Gamma-\liminf$ inequality. Let $\rho_\eps\weak\rho^\#$. By Lemma \ref{localiz} we can replace $\rho_\eps $ by $\theta_\delta\rho_\eps+\nu_\eps$ with $\nu_eps$ chosen as in Example \ref{exthetanu} so that the constant $K$ does not depend on $\delta$. Since 
$$\liminf_{\eps \to 0} G_\eps(\theta_\delta\rho_\eps+\nu_\eps) \leq \liminf_{\eps \to 0} G_\eps(\rho_\eps).$$
 We denote by $\rho_\eps^i := {\theta_\delta \rho_\eps} _{| B(X_i, 2 \delta)}$
and we have, for some constant $K_\delta$ that does not depend on $\eps$, the inequality
$$G_\eps(\rho_\eps)\ge\sum_{i=1}^M(\eps^2 T(\rho_\eps^i)-\eps U_{Z_i}(\rho^i_\eps))+\eps^2\alpha_0 T(\nu_\eps)+\eps C(\rho_\eps)-\eps\frac{K}{2\delta}.$$
Again we need to look at $C(\rho_\eps)$. If $\alpha_i\leq 1/2$ for all $i$ we just use that $C(\rho_\eps)\geq0$ and get
$$G_\eps(\rho_\eps)\ge-\frac14\sum_{i=1}^M Z_i^2\|\rho_\eps^i\|+\eps^2\alpha_0 T(\nu_\eps)-\eps\frac{K}{2\delta}\,.$$
When $\eps \to 0$ and then $\delta \to 0$ we obtain 
\[\liminf_{\eps \to 0} G_\eps(\rho_\eps)\ge-\frac14\sum_{i=1}^M Z_i^2 \int _{B(X_i,2\delta)} \rho dx \stackrel{\delta \to 0}{\longrightarrow}-\frac14\sum_{i=1}^M Z_i^2 \alpha_i = \sum_i g_b(Z_i,\alpha_i)\]
which concludes the proof in this case.

If $\alpha_1>1/2$ (or any $\alpha_i$ up to reindexing) then for $\eps$ small enough $\|\rho^\eps_i\|=\rho_\eps^1\big(B(X_1,2\delta)\big)>1/2$ and then by Lemma \ref{staylocal}
\[\begin{split}
G_\eps (\rho_\eps)&\ge\eps^2 T(\rho_\eps^1)+\eps C(\rho_\eps^1,2\|\rho_\eps^1\|-1)-\eps U_{Z_1}(\rho^1_\eps)\\
&\qquad+\sum_{i=2}^M(\eps^2 T(\rho_\eps^i)-\eps U_{Z_i}(\rho^i_\eps))+\eps^2\alpha_0 T(\nu_\eps) +\eps\frac{K}{2\delta}.
\end{split}\]
By the homogeneity of the three terms 
 \[
 G_\eps (\rho_\eps) \ge g_b(Z_1,\|\rho^1_\eps\|)-\frac14\sum_{i=2}^M Z_i^2\|\rho_\eps^i\|+\eps^2\alpha_0 T(\nu_\eps) +\eps\frac{K}{2\delta}.
 \]
Passing again to the limit for $\eps \to 0$ and then $\delta \to 0$, and using the lower semi-continuity of $g_b(Z_i,\cdot)$ allows to conclude in this case.
\end{proof}

\begin{rema} Since $\sum_i \alpha_i \leq 1$ at most one of the $\alpha_i$ may be greater then $1/2$ and we will always assume that is the first $\alpha_1$.
Then, recalling that by definition,
$$\rho=\rho^\#+\rhop=\sum_{i=1}^M\alpha_i\delta_{X_i}+\rhop\qquad\forall\rho\in \PP^-,$$
we have two possible cases:
$$G(\rho)=G(\rho^\#)=-\frac{1}{4}\sum_{i=1}^M\alpha_i Z_i^2,\qquad\mbox{if }\alpha_1\le\frac{1}{2},$$
or
$$G(\rho)=G(\rho^\#)=g_b(Z_1,\alpha_1)-\frac{1}{4}\sum_{i=2}^M\alpha_i Z_i^2,\qquad\mbox{if }\alpha_1>\frac{1}{2}.$$
\end{rema}

We are now in a position to study the minimization problem
\be\label{minpbrho}
\min\big\{G(\rho)\ :\ \rho\in\PP^-\big\}.
\ee

\begin{theo}\label{solprob}
Let us assume that $M\ge 2$. Then the minimization problem \eqref{minpbrho} has a solution $\rho\in\PP^-$. Moreover, every such a minimizer $\rho$ belongs to $\PP$ and is of the form
$$\rho=\rho^\#=\sum_{i=1}^M\alpha_i\delta_{X_i}\qquad\hbox{with }\sum_{i=1}^M\alpha_i=1.$$
\end{theo}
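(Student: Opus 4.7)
My plan is to use Theorem \ref{explform} to reduce the minimization of $G$ on $\PP^-$ to a finite-dimensional problem over the compact simplex of atomic coefficients, and then to invoke the explicit formula $g_b(Z,\alpha)=-\alpha Z^2/4$ for $\alpha\le 1/2$ in order to force every minimizer to have total mass equal to $1$ and to be concentrated on the nuclei.

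\textbf{Existence.} For every $\rho\in\PP^-$, Theorem \ref{explform} yields $G(\rho)=G(\rho^\#)=\sum_{i=1}^M g_b(Z_i,\alpha_i)$ with $\alpha_i:=\rho(\{X_i\})$. Since $G$ is the $\Gamma$-limit of the family $G_\eps$ on the metrizable weakly$^*$ compact space $\PP^-$, it is weakly$^*$ lower semicontinuous and thus attains its minimum; alternatively, one reduces to minimizing $(\alpha_1,\dots,\alpha_M)\mapsto\sum_i g_b(Z_i,\alpha_i)$ on the compact simplex $\{(\alpha_i):\alpha_i\ge 0,\ \sum_i\alpha_i\le 1\}$, where the summand is continuous on $[0,1]$ (being convex and finite).

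\textbf{Structure of every minimizer.} Let $\rho\in\PP^-$ be any minimizer of $G$, and set $\alpha_i:=\rho(\{X_i\})$. I claim $\sum_i\alpha_i=1$. Assume by contradiction that $\sum_i\alpha_i<1$. As $M\ge 2$ and $\sum_i\alpha_i\le 1$, we cannot have $\alpha_i\ge 1/2$ for every $i$ (otherwise $\sum_i\alpha_i\ge M/2\ge 1$), so some index $j$ satisfies $\alpha_j<1/2$. Set $\beta:=\min\bigl\{\tfrac12-\alpha_j,\,1-\sum_i\alpha_i\bigr\}>0$, define $\tilde\alpha_j:=\alpha_j+\beta\in(\alpha_j,1/2]$ and $\tilde\alpha_i:=\alpha_i$ for $i\ne j$, and let $\tilde\rho:=\sum_i\tilde\alpha_i\delta_{X_i}\in\PP^-$. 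Using the explicit formula $g_b(Z_j,\alpha)=-\alpha Z_j^2/4$ on $[0,1/2]$,
$$G(\tilde\rho)-G(\rho)=g_b(Z_j,\tilde\alpha_j)-g_b(Z_j,\alpha_j)=-\beta Z_j^2/4<0,$$
contradicting the minimality of $\rho$. Therefore $\sum_i\alpha_i=1$, and combined with $\sum_i\alpha_i\le\|\rho\|\le 1$ this forces $\|\rho\|=1$ (so $\rho\in\PP$) and $\rhop=0$ (so $\rho=\rho^\#$), which is the required form.

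\textbf{Main obstacle.} The crux is the combinatorial remark that the assumption $M\ge 2$ produces an index $j$ with $\alpha_j$ strictly below $1/2$ whenever the total atomic mass is less than $1$; this strict inequality is essential, since $g_b(Z,\cdot)$ is known explicitly to be strictly decreasing only on $[0,1/2]$. In the excluded case $M=1$ the argument breaks down, consistent with the physical fact that a single nucleus of charge $Z$ may fail to bind two full electrons; the assumption $M\ge 2$ bypasses this subtlety by always leaving room to redistribute mass into a nucleus whose coefficient is still in the linear regime of $g_b$.
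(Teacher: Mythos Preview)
Your proof is correct and follows essentially the same approach as the paper's: both use the explicit formula $G(\rho)=\sum_i g_b(Z_i,\alpha_i)$ from Theorem~\ref{explform}, locate an index $j$ with $\alpha_j<1/2$ under the contradiction hypothesis $\sum_i\alpha_i<1$, and exploit the linear formula $g_b(Z_j,\alpha)=-\alpha Z_j^2/4$ on $[0,1/2]$ to strictly decrease the energy by adding mass at $X_j$. Your version is in fact slightly cleaner in two places: you spell out the pigeonhole reason why $M\ge2$ forces some $\alpha_j<1/2$, and you work directly with the atomic competitor $\tilde\rho=\sum_i\tilde\alpha_i\delta_{X_i}$ rather than with $\rho+\eta\delta_{X_j}$ (which may fail to lie in $\PP^-$ if $\rhop\ne0$, a harmless but unstated issue in the paper's write-up).
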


Note that the case $M=1$ is discussed in Remark \ref{solM1} below.

\begin{proof}The existence of an optimal $ \rho$ follows from the weak$^*$ compactness of $\PP^-$ and lower semicontinuity of the $\Gamma$-limit $G$. For such a $\rho$ , set $\a_i= \rho(\{X_i\})$. Then, by Theorem \ref{explform}, $\rho^\#:=\sum_{i=1}^M\alpha_i\delta_{X_i}$ is also optimal and we claim that $\sum_{i=1}^M \a_i =1$. Indeed if $\sum_{i=1}^M \a_i <1$, then there exists $j$ such that $\a_j<1/2$ and we may consider $\bar\rho := \rho + \eta \delta_{X_j}$ where $\eta$ is such that
$$0<\eta<\min\left\{\frac12 -\a_j, 1-\sum \a_i\right\}.$$
Then, by applying again Theorem \ref{explform}, we obtain
\begin{align*}
G(\bar\rho)=G(\rho^\#+\eta\delta_{X_j})&=-\frac{(\a_j +\eta)}{4} Z_j^2 + \sum_{i\not=j}g_b(Z_i,\alpha_i) \\
& =-\frac1{4}\eta Z_j^2 + G(\rho^\#) \;< \;G(\rho^\#)
\end{align*}
where we used the fact that $\a_j+\eta<1/2$. 
\end{proof}

\begin{rema}\label{Gmin}
To illustrate the previous analysis, we now discuss the structure of the minimizers of problem \eqref{minpbrho} in the special case $N=M=2$. Without loss of generality we may assume that $Z_1 \geq Z_ 2$, then it follows from Theorem \ref{solprob} that the minimizers of \eqref{minpbrho} are of the form
$$\rho=\alpha\delta_{X_1}+(1-\alpha)\delta_{X_2}$$
with $\alpha\in[0,1]$ minimizing the problem
\be\label{minGmin}
\min\big\{g_b(Z_1,\alpha)+g_b(Z_2,1-\alpha)\ :\ \alpha\in[0,1]\big\}\,.
\ee
From Proposition \ref{gbprop} and Remark \ref{gbpropN2}, we know that the convex non-increasing functions $\beta \mapsto g_b(Z_i,\beta)$ satisfy
$$\begin{cases}
\ds g_b(Z_i,\beta)=-\frac{Z_i^2}{4}\,\beta&\quad\text{for }\beta\in\left[0,\frac{1}{2}\right]\,,\\
\ds g_b(Z_i,\beta)>-\frac{Z_i^2}{4}\,\beta&\quad\text{for }\beta\in\left]\frac{1}{2},1\right]\,.
\end{cases}$$
As a consequence for $Z_1=Z_2$ we obtain that the minimum is uniquely attained for $\alpha=\frac{1}{2}$ : we thus recover a more precise result than in Remark \ref{G0min} where in that case any $\alpha \in [0,1]$ would lead to a solution. Here, due to the correlation term $C$, each nucleus gets exactly one electron (see also Remark \ref{friesecke} below). \\
We now turn to the case $Z_1 > Z_2$. In that case it holds $g_b(Z_1,\beta) \leq g_b(Z_2,\beta)$ for all $\beta$.
Moreover from the above properties of $g_b$ it follows that the minimum in \eqref{minGmin} is equal to
\[
\min \left\{ g_b(Z_1,\alpha) -\frac{Z_2^2}{4}\, (1-\alpha) : \alpha \in \left[ \frac{1}{2},1\right] \right\}\,.
\]
Now, since $g_b(Z_1,1) > - \frac{Z_ 1^2}{4}$ we may choose $Z_2$ close enough to $Z_1$ (yet keeping $Z_2<Z_1)$ such that
\[
g_b(Z_1,1) > - \frac{Z_ 1^2+Z_2^2}{8}\,,
\]
in which case the minimum in \eqref{minGmin} is attained for some $\alpha < 1$ : this is quite different from 
what obtained in Remark \ref{G0min} where the minimum would be only for $\alpha=1$.
Somehow this allows for a continuity of the solution set of \eqref{minGmin} as $Z_2$ gets closer to $Z_1$.

Since $\rho$ represents the probability distribution of the $N$ electrons, the presence of values of $\alpha_i$ which are not of the form $k/N$ should be interpreted as the presence of a fractional number of electrons. This fact already appeared in the literature (see for instance \cite{msgg13, pplb82}) and has a reasonable interpretation in terms of time-averaging.
\end{rema}

\begin{rema}\label{friesecke}
We now discuss more extensively the $H_2$ molecule bond dissociation, and we show how our results compare to the results of \cite[Theorem 5.1]{cfm14}. The $H_2$ molecule corresponds to $M=2$ nuclei with charges $Z_1=Z_2=1$ and $N=2$ electrons. The physical total energy for this molecule when the nuclei are located at $X_i/\eps$ is given by
\be\label{h2}\frac{2\eps}{|X_1-X_2|}+ \inf \left\{2 T(\rho) + C(\rho) - 2 (U_{X_1/\eps} (\rho) + U_{X_2/\eps} (\rho))\ :\ \|\rho\|=1 \right\}
\ee
where the first term in the nucleus-nucleus Coulomb interaction and is vanishing with 
$\eps$. The $\inf$ part corresponds to $2\inf G_\eps$ for $b=1/2$. 
The representation of the $\Gamma$-limit of $G_\eps$ in this case is given by
$$G(\rho)= g_b(1,\rho(\{X_1\}))+ g_b(1,\rho(\{X_2\}))$$
and, according to Remark \ref{Gmin} above, the minimum of $G$ is attained for $\rho(\{X_1\})=\rho(\{X_2\})=\frac{1}{2}.$ 
It follows that the energy \eqref{h2} above converges, as $\eps\to 0$ to
$$4\,g_b\left(1,\frac{1}{2}\right)=4\min\left\{\Tcal(\rho)-\Ucal_0(\rho)\ :\ \|\rho\|=\frac{1}{2}\right\}=2\min\left\{T(\rho)-U_0(\rho)\ :\ \|\rho\|=1\right\}$$
which is twice the energy of an hydrogen atom as proved in \cite[Theorem 5.1]{cfm14}. Note that with our normalization $\|\rho\|=\frac{1}{2}$ represents the energy of one electron when $N=2$.
\end{rema}

\section{The general interacting case}
In this section we consider the general case $b>0$ and $N\ge3$, for which
the proof of existence as well as a full characterization of the $\Gamma$-limit $G$ of the family of functionals $\{G_\eps\}_\eps$ seems a hard issue.
In view of the successive steps of the preceding section \S\ref{two-interact} for the case $N=2$,
we expect to have the following properties on $G$ over $\PP^-$:
\begin{enumerate}[label=(P\arabic*)]
\item\label{P1} $G(\rho)$ only depends on the restriction of $\rho$ to the nuclei $X_1,\ldots,X_M$ in the sense
$$G(\rho) = G(\rho^\# + \rhop) = G(\rho^\#)$$
where we use the decomposition $\rho=\rho^\#+\rhop$ of Section \ref{two-interact} with
$$\rho^\#=\sum_{i=1}^M\rho(\{X_i\})\delta_{X_i}\,;$$
\item\label{P2} there exists a function $g_b$ given as a generalization to any $N \geq 2$ of the Definition \ref{gb} such that
$$G(\rho^\#)=\sum_{i=1}^M g_b(Z_i,\rho(\{X_i\}))\,.$$
\end{enumerate}
Note that in the case $N=2$ of Section \ref{two-interact}, \ref{P1} is obtained in Theorem \ref{rhosharp} via the localization Lemma \ref{localiz} which allows to control the transport term $C$. On the other hand, the proof of \ref{P2} relies mainly on Lemma \ref{staylocal} which is linked to a deep understanding of the lower semicontinuous envelope $\Cb$ of $C$, that is only fully characterized for this particular value of $N$.

In Subsection \ref{smge1} below, we do obtain the properties \ref{P1} and \ref{P2} for the general case under some regularity assumptions on the subprobability $\rho$, which in particular is required to give small mass to the nuclei $X_i$ (precisely $\rho(\{X_i\})\le1/N$ for all $i$). Then in Subsection \ref{ssm1} we derive \ref{P2} for the special case $M=1$, i.e. when there is only one nucleus.

\subsection{Full characterization of $G$ for particular subprobabilities}\label{smge1}

We consider the general case $b>0$, $M\ge 1$ and $N \geq 2$.

We represent every measure $\rho\in\PP^-$ as
\[
\rho=\sum_{i=1}^M\alpha_i\delta_{X_i}+\rhop
\]
with $\rhop$ which does not charge the points $X_i$.
In Proposition \ref{l2} we already showed that whenever $\alpha_1=\dots=\alpha_M=0$ then $G(\rho)=0$. In this section we compute $G$ in the case $\alpha_i\le1/N$ for all $i$. A particular case is $M=N$ and $\alpha_i=1/N$ for all $i$ which is the expected optimal configuration for the hydrogen molecule $H_N$.


In the following proof, we shall need the notion of concentration of a finite measure $\rho$,
which is denoted $\mu(\rho)$ and defined by
$$ \mu(\rho):= \lim_{r \to 0} \;\sup_{x} \rho (B(x,r)).$$
In particular $\mu (\alpha \delta_X) = \alpha$ and, in general $\mu(\rho)\geq \alpha$ implies that $\rho= \alpha \delta_X + \sigma$ for some point $X$ and a nonnegative measure $\sigma$. We can now state our result.

\begin{theo}\label{alphaileq1N}
Let $\rho=\alpha_1\delta_{X_1}+\dots+\alpha_M\delta_{X_M}+\rhop$ with $\alpha_i\le1/N$ for all $i$ and $\mu (\rhop)\le1/N$, then
$$G(\rho)= -\sum_{i=1}^M \alpha_i \frac{Z_i^2}{4}.$$
\end{theo}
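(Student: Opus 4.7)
The approach is to establish matching upper and lower bounds on $G(\rho)$. The lower bound $G(\rho)\ge -\tfrac14\sum_i\alpha_i Z_i^2$ is immediate: since $C\ge 0$ on $\PP$, one has $G_\varepsilon\ge G^0_\varepsilon$, hence by the non-interacting analysis in Theorem \ref{liebG},
$$G^-(\rho)\,\ge\,G^{0-}(\rho)\,=\,-\frac14\sum_{i=1}^M Z_i^2\,\rho(\{X_i\})\,=\,-\frac14\sum_{i=1}^M\alpha_i Z_i^2.$$

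For the matching upper bound $G^+(\rho)\le -\tfrac14\sum_i\alpha_i Z_i^2$, we first use the lower semicontinuity of $G^+$ together with a mollification-and-truncation argument to reduce to the case in which $\rhop$ is a bounded density compactly supported in $\R^3\setminus\bigcup_i B(X_i,\delta)$ for some $\delta>0$ (so in particular $\mu(\rhop)=0$). In that reduced case the recovery sequence is
$$\rho_\varepsilon\,:=\,\sum_{i=1}^M\alpha_i\,\sigma_i^{1/\varepsilon}(\cdot-X_i)\,+\,\rhop\,+\,(\text{extra mass sent to infinity to reach total mass one}),$$
where $\sigma_i$ is a smooth, compactly supported probability approximating the hydrogen ground state for charge $Z_i$ and hence achieving $T(\sigma_i)-Z_i U_0(\sigma_i)\approx -Z_i^2/4$ thanks to \eqref{usualtrick}. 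The scaling identities $T(\rho^{1/\varepsilon})=\varepsilon^{-2}T(\rho)$ and $U_0(\rho^{1/\varepsilon})=\varepsilon^{-1}U_0(\rho)$, together with the disjointness of the shrinking blob supports from $\spt\rhop$, give
$$\varepsilon^2 T(\rho_\varepsilon)-\varepsilon U(\rho_\varepsilon)\,\xrightarrow[\varepsilon\to 0]{}\,\sum_{i=1}^M\alpha_i\bigl(T(\sigma_i)-Z_iU_0(\sigma_i)\bigr)\,\approx\,-\frac14\sum_{i=1}^M\alpha_i Z_i^2,$$
the contributions from $\rhop$ and from the tail at infinity being infinitesimal.

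The main obstacle is the correlation term: the crude estimate $\mathcal{C}(\alpha_i\sigma_i^{1/\varepsilon}(\cdot-X_i))=\alpha_i\varepsilon^{-1}C(\sigma_i)$ is of order $1/\varepsilon$, so $\varepsilon b\,C(\rho_\varepsilon)$ would remain of order $1$. The remedy is to exhibit a transport plan in which no two electrons ever sit inside the same concentrating blob. The structural hypotheses $\alpha_i\le 1/N$ and $\mu(\rhop)\le 1/N$ enable one to partition $\rho_\varepsilon$ into $N$ non-negative pieces $\mu_1^\varepsilon,\ldots,\mu_N^\varepsilon$ of common mass $1/N$ such that each blob $\alpha_i\sigma_i^{1/\varepsilon}(\cdot-X_i)$ is entirely contained in a single $\mu_{k(i)}^\varepsilon$. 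Setting $\nu_k:=N\mu_k^\varepsilon\in\PP$ and
$$P_\varepsilon\,:=\,\frac{1}{N!}\sum_{\sigma\in S_N}\nu_{\sigma(1)}\otimes\cdots\otimes\nu_{\sigma(N)},$$
every marginal of $P_\varepsilon$ equals $\rho_\varepsilon$, and because different slots always see distinct pieces,
$$C(\rho_\varepsilon)\,\le\,\int c\,dP_\varepsilon\,=\,\sum_{k<l}\int\frac{d\nu_k(x)\,d\nu_l(y)}{|x-y|}.$$
Each cross integral stays bounded in $\varepsilon$: a blob at $X_{i_k}$ is always paired with mass at positive distance from $X_{i_k}$ (either another blob at some $X_{i_l}\ne X_{i_k}$, or the density $\rhop$ kept away from all nuclei by $\delta$, or a tail sent to infinity). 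Consequently $\varepsilon b\,C(\rho_\varepsilon)\to 0$, so $G^+\le -\tfrac14\sum_i\alpha_i Z_i^2$ on the smooth approximants, and lower semicontinuity of $G^+$ then transfers the same bound to $\rho$.
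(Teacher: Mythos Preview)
Your lower bound via $G^{0-}$ matches the paper exactly. For the upper bound, however, your route diverges from the paper's and contains a gap.

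The paper does not build an explicit transport plan. It takes \emph{any} recovery sequence $\rho_\eps\weak\rho$ realizing $G^0(\rho)$ (supplied by Theorem~\ref{liebG}), notes that for $\rho\in\PP$ this convergence is narrow so $\limsup_\eps\mu(\rho_\eps)\le\mu(\rho)$, reduces by approximation to the strict case $\mu(\rho)<1/N$, and then invokes the result of Colombo--Di~Marino--Stra \cite{ColDiMStr2018} (or Proposition~2.5 of \cite{bcdp17} under the stronger hypothesis $\mu(\rho)<\tfrac{1}{N(N-1)^2}$) asserting that $C$ is uniformly bounded on families whose concentration stays below $1/N$. This gives $\eps C(\rho_\eps)\to0$ without any explicit plan.

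Your partition-and-tensorize construction is appealing, but the partition you claim need not exist when $M>N$. Take $N=3$, $M=4$, $\alpha_i=1/4$ for all $i$, and $\rhop=0$ (so no divisible mass is available): each of the three pieces must have mass $1/3$, yet any single piece can contain at most one blob (two blobs weigh $1/2>1/3$), so the fourth blob has nowhere to go intact. The same obstruction arises for $N=2$ with $\alpha=(0.4,0.4,0.2)$. In such configurations the symmetrized product $\frac{1}{N!}\sum_\sigma\nu_{\sigma(1)}\otimes\cdots\otimes\nu_{\sigma(N)}$ is forced to pair part of a blob with itself across distinct coordinates, producing a self-interaction contribution of order $1/\eps$ in $C(\rho_\eps)$, and $\eps b\,C(\rho_\eps)$ does not vanish.

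A bounded plan \emph{does} exist under the sole hypothesis $\max_i\alpha_i\le 1/N$ (this is exactly the content of \cite{ColDiMStr2018}), but it is in general not of the partition-and-tensorize form: one needs a genuinely multistochastic coupling that mixes the blobs fractionally across the $N$ coordinates while keeping all diagonals empty. You should either cite that external bound, as the paper does, or replace the partition step with such a construction.
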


\begin{proof} Since for every $\rho$
$$G^- (\rho) \geq G^0 (\rho)=-\sum _{i=1}^M \alpha_i \frac{Z_i^2}{4},$$
it is enough to prove that
$$ G^+(\rho)\le-\sum_{i=1}^M \alpha_i \frac{Z_i^2}{4}.$$
We first consider the case $\rho \in \Pcal$. By Theorem \ref{liebG}, there exists a sequence $\rho_\eps\weak\rho$ such that
$$G^0_\eps(\rho_\eps) = \eps^2 T(\rho_\eps) - \eps U(\rho_\eps) \;\longrightarrow\; -\sum_{i=1}^M \alpha_i \frac{Z_i^2}{4}\,.$$
Note that since $\rho \in \Pcal$ the sequence $\{\rho_\eps\}_\eps$ converges narrowly to $\rho$ and $\limsup \mu(\rho_\eps) \leq \mu(\rho)$. Assume first that $\mu(\rho)<1/N$.
Then by a result of \cite{ColDiMStr2018} (a proof of this result is also available in \cite{tesiStra} or, in Proposition 2.5 of \cite{bcdp17} for the more restrictive case $\mu(\rho)<\frac{1}{N(N-1)^2}$) the transport cost $C$ is uniformly bounded on the family $\{\rho_\eps\}_\eps$, so we have
$$\lim_{\eps\to0}\eps C(\rho_\eps)=0.$$
We conclude that
$$G^+(\rho)\le\limsup_{\eps\to0}G_\eps (\rho_\eps)=G^0 (\rho)=-\sum_{i=1}^M \alpha_i\frac{Z_i^2}{4}.$$
We obtain the general case by approximation since $G^+$ is lower semicontinuous and the right hand side is upper semicontinuous with respect to weak* convergence.
\end{proof}


\subsection{The special case $M=1$ and $b>0$}\label{ssm1}

In this case, we may assume that the position of the unique nucleus is $X_1 = 0$.
We prove \ref{P2} with the following definition for the function $g_b$.

\begin{defi}\label{defgb}
For $R >0$, $Z \in \R_+$ and $\alpha \in [0,1]$ we define
\[ 
g_b^R (Z,\alpha):=\inf\left\{T(\rho)+bC(\rho)-Z\,U_0(\rho)\ :\ \rho\in\PP,\ \int_{B(0,R)}d\rho \le\alpha\right\} 
\]
and then set 
\be\label{gbgeneral}
g_b(Z,\alpha):=\sup\big\{g_b^R(Z,\alpha)\ :\ R>0\big\}\,.
\ee
\end{defi}

Note that the definition above could look ambiguous because we already defined the function $g_b$ in \eqref{gb}. However, we shall obtain as a consequence of Theorem \ref{P2M1} that the two definitions of the function $g_b$ given in \eqref{gbgeneral} above and in \eqref{gb} do coincide for the case $N=2$ (see Remark \ref{gb-coherent} below).

Let us state some properties of the fonctions $g_b$ and $g_b^R$ that shall be usefull below.

\begin{lemm}\label{gblemm}
For any $R> 0$, the functions $\alpha \mapsto g_b^R(Z,\alpha)$ is convex, continuous and non-increasing on $[0,1]$ and $g_b^R(Z,\alpha) \geq -\frac{Z^2}{4}\alpha$. The same holds for $\alpha \mapsto g_b(Z,\alpha)$.
\end{lemm}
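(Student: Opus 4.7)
The plan is to verify the four properties of $g_b^R$---monotonicity, convexity, the lower bound $-Z^2\alpha/4$, and continuity---and then transfer them to $g_b = \sup_{R > 0} g_b^R$.

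Monotonicity is immediate from the definition: if $\alpha_1 \le \alpha_2$, the admissible set $\{\rho \in \PP : \int_{B(0,R)} d\rho \le \alpha_1\}$ is contained in the corresponding set for $\alpha_2$, so the infimum over the smaller set is larger. Convexity follows from a value-function analysis: the functional $\rho \mapsto T(\rho) + bC(\rho) - ZU_0(\rho)$ is convex on $\PP$ (established in the proof of convexity of $F_\eps$), while the constraint $\int_{B(0,R)} d\rho \le \alpha$ is affine in $(\rho, \alpha)$. Given $\rho_i \in \PP$ that are $\varepsilon$-optimal for $g_b^R(Z,\alpha_i)$ with $i = 1,2$, the convex combination $t\rho_1 + (1-t)\rho_2 \in \PP$ is admissible for $g_b^R(Z, t\alpha_1 + (1-t)\alpha_2)$ with functional value at most $t g_b^R(Z,\alpha_1) + (1-t) g_b^R(Z,\alpha_2) + \varepsilon$. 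Both properties transfer to $g_b$, as pointwise suprema of convex non-increasing functions remain convex and non-increasing.

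For the lower bound, apply Lemma \ref{l1} with $u^2 = \rho$ and $\Omega = B(0,R)$, giving $\int_{B(0,R)} \rho/|x|\, dx \le \sqrt{\kappa \alpha T(\rho)}$; bounding the exterior integral by $\int_{B(0,R)^c} \rho/|x|\, dx \le 1/R$ and completing the square yields $T(\rho) + bC(\rho) - ZU_0(\rho) \ge -Z^2\alpha/4 - Z/R$ (using the sharp constant $\kappa = 1$ which matches the hydrogen case \eqref{usualtrick}). The bound $g_b(Z,\alpha) \ge -Z^2\alpha/4$ then follows by taking the supremum over $R$, which absorbs the tail correction $-Z/R$.

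Continuity is the most delicate property. On the open interval $(0, 1)$, a finite convex function is automatically continuous; only the endpoints require a dedicated argument. At $\alpha = 1$, continuity from the left follows by perturbing a near-optimal $\rho$ via the convex combination $(1-\eta)\rho + \eta\sigma$, with $\sigma$ a fixed probability supported outside $B(0, R)$: such a measure is admissible for $g_b^R(Z, 1-\eta)$ and has functional value within $O(\eta)$ of $g_b^R(Z, 1)$. At $\alpha = 0$, a cutoff-and-renormalize construction is required: given $\rho_n$ admissible for $g_b^R(Z,\alpha_n)$ with $\alpha_n \to 0^+$, one multiplies $\sqrt{\rho_n}$ by a smooth cutoff $\chi_n$ vanishing on $B(0,R)$ and transitioning to $1$ over a thickness $\eta_n$ tuned so that $\sqrt{\alpha_n} \ll \eta_n$; normalization produces $\tilde\rho_n$ admissible for $g_b^R(Z, 0)$ with $F(\tilde\rho_n) \le F(\rho_n) + o(1)$, whence $g_b^R(Z, 0) \le \liminf_n F(\rho_n) = \liminf_{\alpha \to 0^+} g_b^R(Z, \alpha)$. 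The main technical hurdle is this cutoff construction: the kinetic cross term $\int \chi_n \sqrt{\rho_n}\,\nabla\chi_n \cdot \nabla\sqrt{\rho_n}$ and the gradient term $\int \rho_n |\nabla\chi_n|^2$ must be controlled via the tuning of $\eta_n$, and the correlation term $C$ must be shown to change only by $o(1)$ under the cutoff and renormalization. The same arguments transfer to $g_b$ using convexity together with the monotone convergence $g_b^R \nearrow g_b$ as $R \to \infty$.
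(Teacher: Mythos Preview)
Your treatment of monotonicity and convexity is the same as the paper's, but the lower-bound step contains a genuine error. The claim that Lemma~\ref{l1} holds with sharp constant $\kappa=1$ for $\Omega=B(0,R)$ is false: the identity \eqref{usualtrick} only yields $\kappa=1$ when $\Omega=\R^3$. A concrete counterexample is the hydrogen ground state $\rho_*(x)=\tfrac{1}{8\pi}e^{-|x|}$ (charge $Z=1$, so $T(\rho_*)=\tfrac14$): at $R=4$ one computes $\big(\int_{B_R}\rho_*/|x|\,dx\big)^2\approx0.206$ while $\big(\int_{B_R}\rho_*\,dx\big)\cdot T(\rho_*)\approx0.190$, so the inequality with $\kappa=1$ fails. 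In fact the very inequality $g_b^R(Z,\alpha)\ge -\tfrac{Z^2}{4}\alpha$ is false for small $R$ (take $b=0$ and $\rho_*$ itself: for $R$ small the constraint $\int_{B_R}\rho_*\le\alpha$ is satisfied yet $G_1^0(\rho_*)=-\tfrac{Z^2}{4}<-\tfrac{Z^2}{4}\alpha$), so only the weaker uniform bound $g_b^R(Z,\alpha)\ge-\tfrac{Z^2}{4}$ is available for fixed $R$; the $\alpha$-dependent bound holds only after taking $\sup_R$. Your argument does give $g_b(Z,\alpha)\ge-\tfrac{Z^2\kappa}{4}\alpha$ with the unspecified $\kappa$ of Lemma~\ref{l1}; obtaining the sharp constant for $g_b$ requires a genuine localization (e.g.\ an IMS-type cutoff so that \eqref{usualtrick} can be applied to the localized density) rather than a bare restriction of the integral.

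For continuity at the endpoints the paper's route is much shorter than yours. At $\alpha=1$ it simply extends $g_b^R$ and $g_b$ by the same formulae to $[0,+\infty)$ (where they are constant for $\alpha\ge1$), so $\alpha=1$ becomes an interior point and convexity plus boundedness does the work. At $\alpha=0$ the paper avoids the cutoff-and-renormalize construction entirely: take any smooth $\rho\in\PP$ supported outside $B(0,R)$; the dilate $\rho^{1/\lambda}$ is still supported outside $B(0,R)$ for $\lambda>1$, and $G_1(\rho^{1/\lambda})=\lambda^{-2}T(\rho)+\lambda^{-1}bC(\rho)-\lambda^{-1}ZU_0(\rho)\to0$ as $\lambda\to\infty$, whence $g_b^R(Z,0)\le0$ in two lines. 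Combined with the lower bound (for $g_b$) this pins $g_b(Z,0)=0$ and gives continuity there. Your cutoff scheme must instead control kinetic cross-terms and the variation of $C$ under truncation and renormalization---workable in principle, but the scaling trick makes all of it unnecessary.
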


\begin{proof}
First note that the functions $g_b^R(Z,\cdot)$ and $g_b(Z,\cdot)$ may in fact be defined through the same formulas on $[0,+\infty[\,$, then being constant and equal to $g_b(Z,1)$ on $[1,+\infty[\,$.
Moreover, the functions $g_b^R(Z,\cdot)$ are clearly non-increasing, so that this also holds for $g_b(Z,\cdot)$.
Finally the convexity of $\alpha \mapsto g_b^R(Z,\alpha)$ follows from the convexity of the map
$$\rho\mapsto G_1(\rho)=T(\rho)+bC(\rho)-Z\,U_0(\rho),$$
and taking the supremum over $R$ yields the convexity for $g_b(Z,\cdot)$. It remains to show the continuity on $[0,1]$. To see this we first compute
$$\forall R>0,\ \forall\alpha\in[0,1],\qquad g_b^R(Z,\alpha)\ge\inf_{\rho\in\PP}G_1^0(\rho)
=\inf_{\rho\in\PP}G^0=-\frac{Z^2}{4}\;,$$
where the last equality follows from \eqref{usualtrick}. Consider now $R>0$ and $\rho\in\PP$ smooth and such that $\int_{B(0,R)}d\rho=0$, then for all $\lambda >1$ one has $\int_{B(0,R)}d\rho^{1/\lambda}=0$ and
\[
g_b^R(Z,0) \leq G_1(\rho^{1/\lambda}) \leq \frac{1}{\lambda^2} T(\rho) + \frac{b}{\lambda}C(\rho) - \frac{Z}{\lambda} U_0(\rho)
\]
and letting $\lambda$ go to infinity yields $g_b^R(Z,0)\leq 0$. From the preceding we thus get the continuity of $g_b^R(Z,\cdot)$ and $g_b(Z,\cdot)$ at $\alpha=0$. Finally these convex non-increasing functions take their values in $\left[ -\frac{Z^2}{4}, 0\right]$, so they are bounded and thus continuous on $\,]0,+\infty[\,$.
\end{proof}

We are now in position to prove our main result in this particular case.

\begin{theo}\label{P2M1}
For $M=1$ and $X_1=0$ it holds
\be\label{m1theo}
\forall \alpha \in [0,1], \qquad G(\alpha \delta_0) = g_b(Z,\alpha)
\ee
where $g_b(Z,\alpha)$ is given by \eqref{gbgeneral}. Moreover, it holds
\be\label{m1theo-}
G^-(\alpha\delta_0+\rhop)\ge g_b(Z,\alpha)
\ee
for all $\alpha\in[0,1]$ and for all $\rhop$ such that $\rhop(\{0\})=0$ and $\alpha\delta_0+\rhop\in\Pcal^-$.
\end{theo}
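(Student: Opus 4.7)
My plan is to split the proof into the stronger lower bound \eqref{m1theo-} (from which the lower bound half of \eqref{m1theo} follows by taking $\rhop=0$) and the upper bound $G^+(\alpha\delta_0) \leq g_b(Z,\alpha)$. The key tool throughout will be the scaling identity
\[
G_\eps(\rho) = G_1(\rho^\eps), \qquad \rho^\eps = h_\eps^\# \rho, \quad h_\eps(x) = x/\eps,
\]
which holds for $M=1$, $X_1=0$ by \eqref{Dbond} and transfers the $\Gamma$-convergence question into a mass-concentration question for the spread-out sequence $\rho_\eps^\eps$.

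For the lower bound, take $\rho_\eps \in \Pcal$ with $\rho_\eps \weak \alpha\delta_0 + \rhop$ and $\rhop(\{0\})=0$. Since $\rho_\eps^\eps \in \Pcal$ and $G_\eps(\rho_\eps) = G_1(\rho_\eps^\eps)$, I would fix $R > 0$ and $\eta > 0$, and choose $\eta_0 > 0$ small enough that $\rhop(\overline{B(0,\eta_0)}) < \eta$ and $\rhop(\partial B(0,\eta_0)) = 0$ (possible since $\rhop$ has countably many atoms and $\rhop(\{0\})=0$). Weak$^*$ convergence then gives, for $\eps < \eta_0/R$,
\[
\int_{B(0,R)} d\rho_\eps^\eps = \rho_\eps(B(0,\eps R)) \leq \rho_\eps(\overline{B(0,\eta_0)}) \;\longrightarrow\; \alpha + \rhop(\overline{B(0,\eta_0)}) \leq \alpha + \eta,
\]
so $\rho_\eps^\eps$ is admissible for $g_b^R(Z, \alpha + \eta)$ up to $o(1)$ and hence $\liminf G_1(\rho_\eps^\eps) \geq g_b^R(Z, \alpha + \eta)$. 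Letting $\eta \to 0^+$ using the continuity of $g_b^R(Z,\cdot)$ from Lemma~\ref{gblemm}, then $R \to +\infty$ using $g_b = \sup_R g_b^R$, yields $\liminf G_\eps(\rho_\eps) \geq g_b(Z,\alpha)$.

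For the upper bound, given $\eta, R > 0$, I would pick $\rho \in \Pcal$ with $\int_{B(0,R)} d\rho = \alpha$ and $G_1(\rho) \leq g_b^R(Z,\alpha) + \eta$ (equality in the mass constraint being achievable by monotonicity of $g_b^R$ in $\alpha$), and decompose $\rho = \sigma_1 + \sigma_2$ with $\sigma_1 = \rho \lfloor B(0,R)$. The recovery sequence is
\[
\rho_\eps := \sigma_1^{1/\eps} + \tau_{Y_\eps}^\# \sigma_2 \in \Pcal, \qquad |Y_\eps| \to +\infty,
\]
which converges weakly$^*$ to $\alpha\delta_0$. The scalings of $T$ and $U_0$ directly give $\eps^2 T(\rho_\eps) \to \Tcal(\sigma_1)$ and $\eps U(\rho_\eps) \to Z\Ucal_0(\sigma_1)$. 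For the transport term, rather than using the naive separable plan (which would yield the strictly too large bound $b[\Ccal(\sigma_1)+\Ccal(\sigma_2)]$), I would push a nearly optimal plan $P^*$ of $C(\rho)$ through the componentwise map that applies $h_{1/\eps}$ on $B(0,R)$ and $\tau_{Y_\eps}$ on $B(0,R)^c$. Its marginals match $\rho_\eps$, and a pair-by-pair Coulomb cost analysis (inner-inner pair distances rescale by $\eps$, outer-outer distances are invariant and get multiplied by an overall $\eps \to 0$, mixed pair distances are of order $|Y_\eps|$) gives $\eps b\, C(\rho_\eps) \leq b C(\rho) + o(1)$. Combining with $\Ucal_0(\sigma_2) \leq (1-\alpha)/R$ yields
\[
\limsup_{\eps \to 0} G_\eps(\rho_\eps) \leq G_1(\rho) + \frac{Z(1-\alpha)}{R},
\]
and passing to the inf over $\rho$, then $R \to +\infty$ and $\eta \to 0$, completes the proof. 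The hard part will be this transport-cost estimate: choosing $|Y_\eps|$ diverging fast enough to absorb the cross contributions of order $1/|Y_\eps|$ (both in $C$ and in $U$, after the $\eps$-multiplication) while remaining compatible with the $\eps$-scale of the inner region, and handling rigorously the non-attainment of the infima in $g_b^R(Z,\alpha)$ and in $C(\rho)$ via standard $\eta$-approximation.
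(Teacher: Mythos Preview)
Your lower-bound argument for \eqref{m1theo-} is essentially the paper's: both rescale to $\rho_\eps^\eps$, use that weak$^*$ convergence to $\alpha\delta_0+\rhop$ with $\rhop(\{0\})=0$ forces $\int_{B(0,R)}d\rho_\eps^\eps\le\alpha+\eta+o(1)$, land in the admissible class for $g_b^{R}$ (the paper takes $R=r/\eps\to\infty$ directly rather than fixing $R$), and conclude via the continuity of $g_b(Z,\cdot)$ from Lemma~\ref{gblemm}.

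For the upper bound you take a genuinely different, constructive route, and as written it has two real gaps. First, the hard cutoff $\sigma_1=\rho\lfloor B(0,R)$ generically gives $\Tcal(\sigma_1)=+\infty$ (the function $\sqrt{\sigma_1}$ jumps across $\partial B(0,R)$), so your recovery sequence $\rho_\eps=\sigma_1^{1/\eps}+\tau_{Y_\eps}^\#\sigma_2$ is not even in the domain of $G_\eps$, and your claim $\eps^2 T(\rho_\eps)\to\Tcal(\sigma_1)$ is vacuous. Smoothing the cutoff repairs $T$, but then the inner and outer pieces overlap, the inner mass overshoots $\alpha$, and the coordinatewise push of $P^*$ no longer has marginals $\rho_\eps$. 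Second, the assertion that ``mixed pair distances are of order $|Y_\eps|$'' fails for $\rho$ without compact support: a point $x_j\in B(0,R)^c$ may sit near $-Y_\eps$, making $|\eps x_i-x_j-Y_\eps|$ arbitrarily small, and controlling that contribution amounts to a localization estimate for $C$---precisely what the opening of Section~6 flags as unavailable for $N\ge3$.

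The paper sidesteps both issues by \emph{not} performing surgery on $\rho$. It takes near-minimizers $\rho_R$ for $g_b^R(Z,\alpha)$, rescales the \emph{whole} $\rho_R$ to $\rho_R^{1/\eps}$ (so $G_\eps(\rho_R^{1/\eps})=G_1(\rho_R)$ exactly, with no transport-cost dissection), and extracts a diagonal subsequence weak$^*$ converging to some $\beta\delta_0$ with $\beta\le\alpha$; this yields $G^+(\beta\delta_0)\le g_b(Z,\alpha)$ for free. The possible mismatch $\beta<\alpha$ is then closed by a soft argument: combining with the already-proved lower bound gives $g_b(Z,\beta)\le g_b(Z,\alpha)$, forcing the convex non-increasing function $g_b(Z,\cdot)$ to be constant on $[\beta,1]$, after which the convexity of $G^+$ on the segment $[\beta\delta_0,\delta_0]$ (together with $G^+(\delta_0)\le g_b(Z,1)$) delivers $G^+(\alpha\delta_0)\le g_b(Z,\alpha)$. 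This detour through convexity is exactly what your explicit cut-and-push construction would have to replace.
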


\begin{rema}\label{gb-coherent}
In view of the above result, it follows from Theorem \ref{explform} applied to $\rho=\alpha\delta_0$ that the two definitions of $g_b$ in \eqref{gb} and \eqref{gbgeneral} coincide in the case $N=2$. Unfortunately, at the moment for $N\ge3$ we do not have an explicit definition for $g_b$ that would involve a kind of general partial transport as in \eqref{gb}.
\end{rema}

\begin{rema}\label{solM1}
It follows from \eqref{m1theo} and \eqref{m1theo-} that, in the case $M=1$, the minimum of the $\Gamma$-limit $G$ is attained for any $\rho = \alpha \delta_0$ such that $\alpha$ minimizes $g_b(Z,\cdot)$ on $[0,1]$ : since this function is non-increasing, we note that in particular the probability $\delta_0$ is thus a particular solution. It would be the unique solution in case $g_b(Z,\alpha)$ attains has unique minimum $\alpha=1$ on $[0,1]$, which seems a reasonable conjecture but still an open question.
\end{rema}

\begin{proof}[Proof of Theorem \ref{P2M1}]
We first show \eqref{m1theo-}. Take admissible $\alpha$ and $\rhop$ and consider
a family $\{\rho_\eps\}_{\eps>0}$ in $\PP$ weakly* converging to $\alpha \delta_0+\rhop$. Fix $\eta>0$, then for $r>0$ small enough we have $\int_{B(0,r)} \rhop \leq \eta/2$, so that for $\eps>0$ small enough it holds $\int_{B(0,r)}d\rho_\eps\le\alpha+\eta$. Thus denoting by $\rho_\eps^\eps (x)$ the rescaled version of $\rho_\eps$ 
$$\rho_\eps^\eps(x)=\eps^3\rho_\eps(\eps x)$$
we have 
$$\int_{B(0,r/\eps)}d\rho_\eps^\eps\leq\alpha+\eta\qquad\mbox{and}\qquad G_\eps(\rho_\eps)=G_1(\rho_\eps^\eps)\ge g_b^{r/\eps}(Z,\alpha+\eta)$$
where $g_b^R(Z,\cdot)$ is extended on $\R_+$ as in the proof of Lemma \ref{gblemm}.
This yields $\liminf_{\eps \to 0} G_\eps (\rho_\eps) \ge g_b(Z,\alpha+\eta)$ and since this holds for any such family $\{\rho_\eps\}_\eps$ we infer $G^-(\alpha \delta_0+\rhop) \geq g_b(Z,\alpha+\eta)$. The claim then follows by continuity of $\alpha \mapsto g_b(Z,\alpha)$ on $[0,1]$.

We now turn to \eqref{m1theo}, so we have to prove the inequalities
\[
\forall \alpha \in [0,1], \qquad g_b(Z,\alpha) \leq G^-(\alpha \delta_0) \leq G^+(\alpha \delta_0) \leq g_b(Z,\alpha)\,.
\]
Note that $G^-(\alpha \delta_0) \geq g_b(Z,\alpha)$ follows from the preceding with $\rhop = 0$.
It remains to show $G^+(\alpha \delta_0) \leq g_b(Z,\alpha)$. We first note that this holds for $\alpha=1$: indeed, in that case one has
\[
g_b(Z,1) \;=\; \inf\{ G_1(\rho) : \rho \in \PP\} \;\geq\; G^+(\delta_0)
\]
where in the last inequality we again use that $G_1(\rho) = G_\eps(\rho^{1/\eps})$ for all $\eps >0$ and $\rho^{1/\eps} \weak \delta_0$ as $\eps \to 0$.
Let now $\alpha$ such that $0\le\alpha<1$ and consider a family $\{\rho_R\}_{R>0}$ in $\PP$ such that
$$\int_{B(0,R)}d\rho_R\le\alpha\quad\forall R>0
\qquad\ \text{and}\ \qquad
\lim_{R \to +\infty} G_1(\rho_R) = g_b(Z,\alpha)\,.$$
Up to extracting a subfamily, we may assume that $\{\rho_R\}_{R >0}$ weakly* converges to some $\rho \in \PP^-$. Then one has $\int d\rho = \beta$ for some $\beta \leq \alpha$. We infer :
\begin{itemize}
\item for fixed $\eps>0$, $\rho_R^{1/\eps} \weak \rho^{1/\eps}$ as $R \to +\infty$,
\item $\rho^{1/\eps} \weak \beta\,\delta_0$ as $\eps \to 0$,
\end{itemize}
and since the weak* topology on $\PP^-$ is metrizable, we can extract a subfamily $\{\rho^{}_{R_\eps}\}_{\eps >0}$
such that $\rho^{1/\eps}_{R_\eps} \weak \beta\,\delta_0$ with $R_\eps \to +\infty$. Now we compute
\[
\limsup_{\eps >0} G_\eps\left(\rho^{1/\eps}_{R_\eps}\right) = \limsup_{\eps >0} G_1 \left(\rho^{}_{R_\eps}\right) = g_b(Z,\alpha)\,.
\]
Since $\rho^\eps_{R_\eps} \weak \beta\,\delta_0$, this implies $G^+(\beta \delta_0) \leq g_b(Z,\alpha)$.
If $\beta=\alpha$, the proof is complete. Otherwise $\beta<\alpha$ and thanks to the first step we infer
\[
\qquad g_b(Z,\beta) \leq G^-(\beta \delta_0) \leq G^+(\beta \delta_0) \leq g_b(Z,\alpha)\,.
\]
Since $g_b(Z,\cdot)$ is convex non-increasing, this implies that this function is constant on $[\beta,1]$. But then we have
\[
G^+(\beta \delta_0) \leq g_b(Z,\alpha) = g_b(Z,1) \qquad \text{and} \qquad G^+(\delta_0) \leq g_b(Z,1) = g_b(Z,\alpha)
\]
and by convexity of $G^+$ on $[\beta\,\delta_0,\delta_0]$ we get the desired inequality $G^+(\alpha \delta_0) \leq g_b(Z,\alpha)$.
\end{proof}

We conclude this subsection with some properties of the function $g_b(Z,\cdot)$.

\begin{prop}\label{gbprop}
It holds 
\[
\forall \alpha \in \left[0,\frac{1}{N}\right], \qquad g_b(Z,\alpha) = -\frac{Z^2}{4}\alpha
\]
and
\[
g_b(Z,\alpha) > -\frac{Z^2}{4}\alpha \quad \text{whenever $\Cb(\rho)>0$ for any $\rho \in \Pcal^-$ such that $\int d\rho = \alpha$}.
\]
\end{prop}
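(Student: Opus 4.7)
The lower bound $g_b(Z,\alpha)\ge -\tfrac{Z^2}{4}\alpha$ holds for every $\alpha\in[0,1]$. To prove it, I would apply Lemma \ref{l1} with its sharp constant (equivalent to the fact that the ground state energy $-Z^2/4$ of the one-electron atom in Section \ref{sprel} saturates the Hardy-Sobolev inequality) to $u=\sqrt\rho$ on $\Omega=B(0,R)$: for any admissible $\rho\in\Pcal$ this gives $\int_{B(0,R)}\rho/|x|\,dx\le\sqrt{\alpha\,T(\rho)}$, while $\int_{B(0,R)^c}\rho/|x|\,dx\le 1/R$. Summing, completing the square in $\sqrt{T(\rho)}$, and using $b\,C(\rho)\ge 0$, one obtains $G_1(\rho)\ge -\tfrac{Z^2}{4}\alpha - Z/R$, and the infimum over $\rho$ followed by the supremum over $R$ concludes.

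For the matching upper bound when $\alpha\le 1/N$, my plan is to build, for each fixed $R>0$ and small $\epsilon>0$, the explicit competitor
\[
\rho\;=\;\alpha\,\mu_K + (1-\alpha)\,\tau_{y_n}^{\#}\nu_n,
\]
where $\mu_K$ is a smooth compactly-supported approximation of $\bar\rho_Z$ satisfying $T(\mu_K)-Z\,U_0(\mu_K)\le -\tfrac{Z^2}{4}+\epsilon$, and $\nu_n$ is a smooth probability rescaled so as to be very spread out ($T(\nu_n)\to 0$) and translated by a vector $y_n$ with $|y_n|\to+\infty$ so that its support is disjoint from $\spt(\mu_K)$. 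The decisive point is that the threshold $\alpha\le 1/N$ makes the plan
\[
P\;=\;\alpha\sum_{i=1}^{N}(\tau_{y_n}^{\#}\nu_n)^{\otimes(i-1)}\otimes\mu_K\otimes(\tau_{y_n}^{\#}\nu_n)^{\otimes(N-i)}+(1-N\alpha)\,(\tau_{y_n}^{\#}\nu_n)^{\otimes N}
\]
a non-negative measure; its $N$ marginals all equal $\rho$, and since each of its product factors has at most one coordinate near the origin, every pairwise distance in $\spt(P)$ diverges with $n$, so that $C(\rho)\le\int c\,dP\to 0$. Disjointness of supports then yields $T(\rho)-Z\,U_0(\rho)\to\alpha\bigl(T(\mu_K)-Z\,U_0(\mu_K)\bigr)\le -\tfrac{Z^2}{4}\alpha+\alpha\epsilon$, and letting $n\to+\infty$, then $K\to+\infty$ and $\epsilon\to 0$ gives $g_b^R(Z,\alpha)\le -\tfrac{Z^2}{4}\alpha$; a final $\sup$ over $R$ closes the equality.

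For the strict inequality I would argue by contradiction: assume $g_b(Z,\alpha)=-\tfrac{Z^2}{4}\alpha$ and pick, for each $R$, a near-optimal $\rho_R\in\Pcal$ with $\int_{B(0,R)}d\rho_R\le\alpha$ and $G_1(\rho_R)\le -\tfrac{Z^2}{4}\alpha+1/R$. Comparing with the lower bound above forces $b\,C(\rho_R)\le(Z+1)/R\to 0$; the asymptotic saturation of the completed-square step and of the sharp Hardy-Sobolev inequality pins $T(\rho_R)\to Z^2\alpha/4$ and $\rho_R(B(0,R))\to\alpha$, and, using the uniqueness (up to rescaling) of the extremizer $\bar\rho_Z$, shows that the mass of $\rho_R$ inside $B(0,R)$ does not escape to infinity but concentrates, so the family is tight. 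Extracting a weak$^*$ cluster point $\rho_R\weak\rho\in\Pcal^-$, tightness gives $\|\rho\|=\alpha$, and the lower semicontinuity of $\Cb$ yields $\Cb(\rho)\le\liminf_R C(\rho_R)=0$, contradicting the standing hypothesis that $\Cb(\rho)>0$ for every sub-probability of mass $\alpha$.

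The hard step is this last tightness/no-escape argument: one must exclude that a positive fraction of the mass concentrated near the origin along $\rho_R$ slides out to infinity while the sequence still satisfies $\int_{B(0,R)}d\rho_R\le\alpha$ and delivers the conjectured energy $-\tfrac{Z^2}{4}\alpha$; this relies precisely on the sharpness of the Hardy-Sobolev inequality and on the uniqueness (modulo rescale) of its extremizer.
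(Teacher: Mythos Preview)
Your lower-bound argument rests on a claim that is false: Lemma~\ref{l1} with sharp constant $\kappa=1$ does \emph{not} hold on $\Omega=B(0,R)$. Take the hydrogen ground state $u(x)=e^{-|x|/2}$ and $R=2$; then $\int_{B(0,2)}u^2/|x|=4\pi(1-3e^{-2})$, $\int_{B(0,2)}u^2=4\pi(2-10e^{-2})$ and $\int_{\R^3}|\nabla u|^2=2\pi$, so
\[
\Big(\int_{B(0,2)}\frac{u^2}{|x|}\Big)^2\Big/\Big(\int_{B(0,2)}u^2\cdot\int_{\R^3}|\nabla u|^2\Big)
=\frac{2(1-3e^{-2})^2}{2-10e^{-2}}\approx 1.09>1.
\]
The identity $\kappa=1$ is equivalent to \eqref{usualtrick} only for $\Omega=\R^3$; nothing transfers to balls. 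A correct route to $g_b(Z,\alpha)\ge-\tfrac{Z^2}{4}\alpha$ is to use a smooth cut-off $\theta_\delta$ as in Lemma~\ref{localize}: for $2\delta\le R$ one has $\|\theta_\delta\rho\|\le\alpha$, $\Tcal(\theta_\delta\rho)\le T(\rho)+K_\delta$ and $Z\,U_0(\rho)\le Z\,\Ucal_0(\theta_\delta\rho)+Z/\delta$, and then \eqref{usualtrick} applied to $\theta_\delta\rho$ gives $G_1(\rho)\ge-\tfrac{Z^2}{4}\alpha-K_\delta-Z/\delta$; sending $R\to\infty$ with $\delta=R/2$ concludes. (The paper obtains this bound even more indirectly via $g_b(Z,\alpha)=G(\alpha\delta_0)\ge G^0(\alpha\delta_0)=-\tfrac{Z^2}{4}\alpha$.)

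Your upper-bound competitor for $\alpha\le1/N$ is correct and is a clean direct alternative to the paper's indirect argument through Theorems~\ref{alphaileq1N} and~\ref{P2M1}. For the strict inequality, however, the ``hard step'' you flag is genuinely unresolved in your outline: invoking uniqueness of the Hardy--Sobolev extremizer does not by itself prevent mass escape, and the sentence ``tightness gives $\|\rho\|=\alpha$'' is circular (the $\rho_R$ are probabilities, so tightness would give mass $1$, not $\alpha$). Moreover, the saturation chain you propose rests on the same localized sharp inequality that fails. The paper bypasses all of this by passing to $u_R=\sqrt{\rho_R}$: an energy bound forces $(u_R)$ bounded in $H^1(\R^3)$, hence weakly convergent to some $u$, and Lemma~\ref{strong} shows directly from \eqref{usualtrick} and lower semicontinuity that $\int u^2\ge\alpha$, while the constraint gives $\int u^2\le\alpha$. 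With $\int u^2=\alpha$ in hand one reads off $\lim C(\rho_R)=0$, and $\liminf C(\rho_R)\ge\Cb(u^2)>0$ yields the contradiction---no extremizer rigidity needed.
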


\begin{rema}\label{gbpropN2}
In the case $N=2$, it follows from Remark \ref{CbN2} that $\Cb(\rho)>0$ for any $\rho \in \Pcal^-$ such that $\int \rho > \frac{1}{2}$, so in that case $g_b(Z,\alpha) > -\frac{Z^2}{4}\alpha$ for any $\alpha > \frac{1}{2}$.
\end{rema}

\begin{proof}[Proof of Proposition \ref{gbprop}]
It follows from Theorems \ref{alphaileq1N} and \ref{P2M1} applied to $\rho = \alpha \delta_{X_1}$ that $g_b(Z,\alpha) = -\frac{Z^2}{4}\alpha$ whenever $\alpha \leq \frac{1}{N}$.\\
We now turn to the second claim, and assume that $\Cb(\rho)>0$ for any $\rho \in \Pcal^-$ such that $\int d\rho = \alpha$. From Lemma \ref{gblemm} we already know that $g_b(Z,\alpha) \geq -\frac{Z^2}{4}\alpha$, we assume by contradiction that
$g_b(Z,\alpha) = -\frac{Z^2}{4}\alpha$. Then there exists a sequence $(\rho_n)_n$ in $\Pcal$ and a sequence $R_n \to +\infty$ such that
\[
T(\rho_n) + b C(\rho_n) - Z\,U_0(\rho_n) \to -\frac{Z^2}{4}\alpha \quad \text{and} \quad
\forall n, \; \int_{B(0,R_n)} d\rho_n \leq \alpha\,.
\]
If we set $u_n := \sqrt{\rho_n}$ then $\int u^2_n\,dx=1$ for all $n$ and
since $C(\rho_n) \geq 0$ we get 
\[
\limsup_n\int\left[|\nabla u_n|^2-Z\,\frac{u_n^2}{|x|}\right]\,dx = \limsup \left[ T(\rho_n) - Z\,U_0(\rho_n) \right] \leq -\frac{Z^2}{4}\alpha\,.
\]
Now up to extracting a subsequence we may assume that $(u_n)$ weakly converges in $L^2(\R^3)$ to some function $u$ and the above limsup is a limit, so we can apply Lemma \ref{strong} below and get
\[
\int u^2 dx \geq \alpha \quad \text{and} \quad
\int \left[|\nabla u|^2-Z\,\frac{u^2}{|x|}\right]\,dx \le\lim_n\int\left[|\nabla u_n|^2-Z\,\frac{u_n^2}{|x|}\right]\,dx \leq -\frac{Z^2}{4}\alpha \,.
\]
From the properties of $\rho_n$ we infer that $\int_{B(0,R_n)} u_n^2 \leq \alpha$ for all $n$, so that $\int u^2 dx \leq \alpha$. This implies that $\int u^2 dx = \alpha$, and
from \eqref{usualtrick} we get
\[
\int \left[|\nabla u|^2-Z\,\frac{u^2}{|x|}\right]\,dx \geq -\frac{Z^2}{4}\alpha \,.
\]
Summarizing we obtain
\[
\lim_n \left[ T(\rho_n) - Z\,U_0(\rho_n) \right] =
\lim_n\int\left[|\nabla u_n|^2-Z\,\frac{u_n^2}{|x|}\right]\,dx = -\frac{Z^2}{4}\alpha
\]
and then $\lim C(\rho_n)=0$. On the other hand, $\liminf C(\rho_n) \geq \Cb(u^2) >0$ since
$\int u^2 = \alpha$, which is the desired contradiction.
	\end{proof}
\begin{lemm}\label{strong}
Let $(u_n)$ be a sequence in $H^1(\R^3)$ that weakly converges in $L^2(\R^3)$
to some function $u$, and such that
$$\forall n, \quad \int u^2_n\,dx=1,\qquad \text{and} \qquad \lim_n\int\left[|\nabla u_n|^2-Z\,\frac{u_n^2}{|x|}\right]\,dx \leq -\frac{Z^2}{4}\alpha$$
for some $\alpha \in \,]0,1]$. Then $(u_n)_n$ weakly converges in $H^1(\R^3)$ to $u$ and
\[
\int u^2 dx \geq \alpha \quad \text{and} \quad
\int \left[|\nabla u|^2-Z\,\frac{u^2}{|x|}\right]\,dx \leq
 \lim_n\int\left[|\nabla u_n|^2-Z\,\frac{u_n^2}{|x|}\right]\,dx \leq -\frac{Z^2}{4}\alpha \,.
\]
\end{lemm}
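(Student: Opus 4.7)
The plan is to first obtain an $H^1$-bound on $(u_n)$, then upgrade the $L^2$-weak convergence to $H^1$-weak convergence, and finally prove the continuity of the attractive potential term $\rho\mapsto \int \rho /|x|\,dx$ along the sequence. The mass estimate $\int u^2\ge \alpha$ will follow by comparing the limsup of the energies with the hydrogen bound from \eqref{usualtrick}.

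For the $H^1$-bound, I would set $A_n:=\int|\nabla u_n|^2$ and use Lemma \ref{l1} with $\Omega=\R^3$, which together with $\int u_n^2=1$ gives $\int u_n^2/|x|\le\sqrt{\kappa A_n}$. Since the sequence $\int[|\nabla u_n|^2-Z u_n^2/|x|]$ is convergent (in particular bounded from above by some constant $C$), we get $A_n-Z\sqrt{\kappa A_n}\le C$, which yields a uniform bound $A_n\le C'$. Extracting any $H^1$-weakly convergent subsequence, its limit must coincide with $u$ by uniqueness of the $L^2$-weak limit; by the Urysohn subsequence principle, the whole sequence weakly converges to $u$ in $H^1$.

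The main step, and the only delicate one, is showing that $\int u_n^2/|x|\,dx \to \int u^2/|x|\,dx$. I would split
\[
\int\frac{u_n^2}{|x|}\,dx \;=\; \int_{B_\delta}\frac{u_n^2}{|x|}\,dx + \int_{B_R\setminus B_\delta}\frac{u_n^2}{|x|}\,dx + \int_{\R^3\setminus B_R}\frac{u_n^2}{|x|}\,dx
\]
and do the analogous splitting for $u$. The outer integrals are bounded by $1/R$ using $\|u_n\|_{L^2}=\|u\|_{L^2}\le 1$ and $1/|x|\le 1/R$ on $B_R^c$. The inner integrals are controlled uniformly in $n$ by Hölder's inequality together with the Sobolev embedding $H^1(\R^3)\hookrightarrow L^6(\R^3)$ applied to $u_n$ and $u$, using that $\||x|^{-1}\|_{L^{3/2}(B_\delta)}\to 0$ as $\delta\to 0$. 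On the annulus $B_R\setminus B_\delta$ the weight $1/|x|$ is bounded and the Rellich–Kondrachov compactness embedding $H^1\hookrightarrow L^2_{loc}$ gives strong convergence of $u_n$ to $u$, hence the middle term converges. Letting $\delta\to 0$ and $R\to+\infty$ after taking $n\to\infty$ yields the claim.

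Once this is established, weak lower semicontinuity of $\int|\nabla u|^2$ combined with the continuity of the potential term gives
\[
\int\left[|\nabla u|^2-Z\frac{u^2}{|x|}\right]dx\;\le\; \lim_n\int\left[|\nabla u_n|^2-Z\frac{u_n^2}{|x|}\right]dx\;\le\; -\frac{Z^2}{4}\alpha.
\]
Finally, applying \eqref{usualtrick} to the normalized function $u/\sqrt{\int u^2}$ (or by rescaling directly) yields the universal hydrogen bound $\int[|\nabla u|^2-Zu^2/|x|]\,dx\ge -\frac{Z^2}{4}\int u^2\,dx$, and combining both inequalities gives $\int u^2\,dx\ge\alpha$, which completes the proof.
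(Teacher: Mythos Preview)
Your proof is correct and follows essentially the same route as the paper: an $H^1$-bound via Lemma \ref{l1}, weak lower semicontinuity of the gradient, control of the potential term by splitting into regions near the origin and at infinity, and the mass bound via the hydrogen inequality \eqref{usualtrick}. The only minor differences are that you prove full convergence of $\int u_n^2/|x|$ (the paper only needs and proves the $\limsup$ inequality, using a two-region rather than three-region split) and you derive $\int u^2\ge\alpha$ directly rather than by contradiction; neither changes the substance of the argument.
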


Note that in the above result when $\alpha=1$ it follows that $(u_n)_n$ strongly converges in $L^2(\R^3)$ to $u$ and the function $u^2$ is a solution of the problem \eqref{usualtrick}.

\begin{proof}
We first note that for $n$ large enough one has
\[
\int |\nabla u_n|^2 \leq Z\, \int \frac{u_n^2}{|x|}\,dx 
\]
which together with Lemma \ref{l1} yields that $(\nabla u_n)_n$ is bounded in $L^2(\R^3,\R^3)$. Since $(u_n)_n$ is also bounded in $L^2(\R^3)$, we infer that
it is bounded in $H^1(\R^3)$, so it converges weakly in $H^1(\R^3)$ to $u$.
By the weak lower semicontinuity of the $H^1$ seminorm we obtain
$$\int|\nabla u|^2\,dx\le\liminf_n\int|\nabla u_n|^2\,dx.$$
Moreover, for every $R>0$ we have
$$\limsup_n\int\frac{u_n^2}{|x|}\,dx\le\limsup_n\bigg[\int_{B_R}\frac{u_n^2}{|x|}\,dx+\frac1R\bigg]=\int_{B_R}\frac{u^2}{|x|}\,dx+\frac1R$$
where $B_R$ denotes the ball in $\R^3$ of radius $R$ centered at the origin. Since $R$ is arbitrary, we get
$$\limsup_n\int\frac{u_n^2}{|x|}\,dx\le\int\frac{u^2}{|x|}\,dx.$$
Then we deduce
\be\label{uineq}
\int\Big[|\nabla u|^2- Z \frac{u^2}{|x|}\Big]\,dx\le\lim_n\int\Big[|\nabla u_n|^2-Z \frac{u_n^2}{|x|}\Big]\,dx \leq -\frac{Z^2}{4}\alpha\,.
\ee
It remains to prove that $\int u^2\,dx=\beta \geq \alpha$. Assume by contradiction that $\beta < \alpha$. We first note that $\beta >0$ otherwise $u=0$ which contradicts the inequalities in \eqref{uineq}. Then the probability $\rho = \frac{1}{\beta} u^2$ satisfies
$$\int\left[\frac{|\nabla \rho|^2}{4\rho}-Z \frac{\rho}{|x|}\right]\,dx \le -\frac{Z^2}{4} \frac{\alpha}{\beta}$$
which contradicts \eqref{usualtrick} since $\frac{\alpha}{\beta} > 1$.
\end{proof}

\subsection{Perspectives and future work}
A general proof (for any $N$ and $M$) of \ref{P1} and \ref{P2} would give a full characterization of the $\Gamma$-limit functional $G$. It would then be even more interesting if the function $g_b$ introduced in Definition \ref{defgb} could be interpreted as the ground state energy of a molecule with one nucleus.
This is precisely what we obtained in Section \ref{two-interact} in the case $N=2$.
It seems that a necessary tool for this program is an expression for the relaxation $\Cb$
of the transport cost $C$ with respect to the weak* convergence of measures. It would be also interesting to carry out a study (numerical and theoretical) of the minimizers of the $\Gamma$-limit functional $G$ which could explain how the bond dissociation happens (i.e. how the electrons are divided among the resulting molecules). A numerical study could also help to understand if the function $\alpha \mapsto g_b(Z,\alpha)$, which is non increasing in $[0,1]$, attains its minimum uniquely for $\alpha=1$ (see Remark \ref{solM1}).

Another interesting issue is the existence, for a fixed $\eps>0$, of minimizers $\rho_\eps\in\PP$ for the functional $F_\eps$ defined in \eqref{Feps}. The existence of a solution $\bar\rho_\eps\in\PP^-$ for the relaxed functional $\bar F_\eps$ (with respect to the weak* convergence of measures) follows straightforwardly from the direct methods of the calculus of variations; the question if $\int d\bar\rho_\eps=1$ is sometimes called {\it ionization conjecture} and is part of our future work, together with a complete characterization of the relaxed correlation functional $\overline C$.

\section*{Acknowledgements}

This paper has been written during some visits of the authors at the Department of Mathematics of University of Firenze, University of Pisa, and at the Laboratoire IMATH of University of Toulon. The authors gratefully acknowledge the warm hospitality of these institutions.

During the work, several conversations with Simone Di Marino, Paola Gori-Giorgi, Mathieu Lewin, and Michael Seidl helped to understand the physical problem and to better frame it in the literature; they are all warmly acknowledged.

The research of the second and fourth authors is part of the project 2010A2TFX2 {\it Calcolo delle Variazioni} funded by the Italian Ministry of Research and is partially financed by the {\it``Fondi di ricerca di ateneo''} of the University of Firenze.


\bigskip
{\small\noindent
Guy Bouchitt\'e:
Laboratoire IMATH, Universit\'e de Toulon\\
BP 20132, 83957 La Garde Cedex - FRANCE\\
{\tt bouchitte@univ-tln.fr}\\
{\tt https://sites.google.com/site/gbouchitte/home}

\bigskip\noindent
Giuseppe Buttazzo:
Dipartimento di Matematica, Universit\`a di Pisa\\
Largo B. Pontecorvo 5, 56127 Pisa - ITALY\\
{\tt giuseppe.buttazzo@unipi.it}\\
{\tt http://www.dm.unipi.it/pages/buttazzo/}

\bigskip\noindent
Thierry Champion:
Laboratoire IMATH, Universit\'e de Toulon\\
BP 20132, 83957 La Garde Cedex - FRANCE\\
{\tt champion@univ-tln.fr}\\
{\tt http://champion.univ-tln.fr}

\bigskip\noindent
Luigi De Pascale:
Dipartimento di Matematica e Informatica, Universit\`a di Firenze\\
Viale Morgagni 67/a, 50134 Firenze - ITALY\\
{\tt luigi.depascale@unifi.it}\\
{\tt http://web.math.unifi.it/users/depascal/}

\end{document}